 \newtheorem{thm}{Theorem}
 \newtheorem{lem}[thm]{Lemma}
 \newtheorem{prop}[thm]{Proposition}
 \newtheorem{cor}[thm]{Corollary}
 \newdefinition{defn}{Definition}
 \newdefinition{rem}{Remark}
 \newproof{proof}{Proof}
\def\ps@pprintTitle{%
  \let\@oddhead\@empty
  \let\@evenhead\@empty
  \def\@oddfoot{\reset@font\hfil\thepage\hfil}
  \let\@evenfoot\@oddfoot
}
\begin{document}
\begin{frontmatter}
\title{Iterations of Meromorphic Functions involving Sine}
\author[1]{Gaurav Kumar \corref{cor1}}
\ead{gaurav\_kumar@iitg.ac.in}
\author[1]{M. Guru Prem Prasad}
\ead{mgpp@iitg.ac.in}
\cortext[cor1]{Corresponding author}
\address[1]{{Department of Mathematics, Indian Institute of Technology Guwahati}, 
  {Assam},
  {India}}
\begin{abstract} 
 In this article, the dynamics of a  one-parameter family of functions $f_{\lambda}(z) = \frac{\sin{z}}{z^2 + \lambda},$  $\lambda>0$, are studied. It shows the existence of parameters $0< \lambda_{1}< \lambda_{2}$ such that bifurcations occur at $\lambda_1$ and $\lambda_2$ for $f_{\lambda}$. It is proved that the Fatou set $\mathcal{F}(f_{\lambda})$ is the union of basins of attraction in the complex plane for $\lambda \in (\lambda_1, \lambda_2) \cup (\lambda_2, \infty)$.
Further, every Fatou component of $f_{\lambda}$ is simply connected for $\lambda \geq \lambda_1$. The boundary of the Fatou set $\mathcal{F}(f_{\lambda})$ is the Julia set  $\mathcal{J}(f_{\lambda})$ in the extended complex plane for $\lambda> 1$. Interestingly, it is found that $f_{\lambda}$ has only one completely invariant Fatou component, say $U_\lambda$ such that  $\mathcal{F}(f_{\lambda}) = U_{\lambda}$ for $\lambda >\lambda_2$. Moreover, the characterization of the Julia set of $f_{\lambda}$ is seen for $\lambda \in (\lambda_1, \infty)\setminus \{\lambda_2\}$.\\
\textbf{Keywords}: Transcendental meromorphic function, basin of attraction, completely invariant component.\\
\textbf{Mathematics Subject Classification:} Primary: 37F10, 30D05.
\end{abstract}
\end{frontmatter}
\section{Introduction}
Once Kepler's laws of Earth's motion around the Sun were successfully explained, all efforts to find an analytical solution to the three-body problem ended in failure. In 1890, H. Poincare studied the solar systems of three bodies and concluded that the motions were very complicated. Instead of finding explicit analytic solutions of differential equations corresponding to the three-body problem, Poincaré worked on the qualitative behavior of these solutions. Further, his efforts to understand the qualitative behavior of systems gave rise to the development of the “chaos theory”.
\par Rather than focusing on the continuous trajectories of a differential equation, the theory of dynamical systems often prefers to consider their discrete analogues $\{f^n(w)\}_{n=0}^{\infty}$, where $f^n(w) =  f\circ f \cdots \circ f(w)$   are the $n$-fold iterates of a complex function $f$ at $w \in {\mathbb{C}}$, where $\mathbb{C}$ is the complex plane.
\par Iteration theory of holomorphic functions is mainly developed by P. Fatou and G. Julia. The dichotomy of the extended complex plane $\widehat{\mathbb{C}}$ in complex dynamical systems comes from the fact that some orbits behave very controlled or stable, while others exhibit chaotic or unstable dynamics.  The stable region is now called the Fatou set, whereas the chaotic region is called the Julia set. \\
\textbf{Notations :}  Throughout this article,
the set of all integers is  denoted by $\mathbb{Z}$.  Further, the real line is denoted by $\mathbb{R}$. Given a non-empty set $U\subset \mathbb{C}$, its closure and boundary is denoted by  $clos({U})$ and $\partial{U}$, respectively. Also, the disc of radius $r$ having centre $a\in \mathbb{C}$ is denoted by $\mathbb{D}(a,r)$. 
\par Let $f : \mathbb{C} \to \mathbb{\widehat{C}}$ be a transcendental meromorphic function such that $\infty$ is the only essential singularity.
The Julia set of $f$ is defined as the set of points $z \in \widehat{\mathbb{C}}$ such that either the family of iterates $\{f^n \}_{n>0}$ is not well-defined at $z$ or it is not a normal family in any neighbourhood of $z$ where it is well-defined.  It is denoted by $\mathcal{J}(f)$.  The Fatou set of $f$ is the complement of  $\mathcal{J}(f)$ in $\mathbb{\widehat{C}}$. It is denoted by $\mathcal{F}(f)$. A component of the Fatou set $\mathcal{F}(f)$ is known as a $Fatou$ $component$.  A Fatou component $U$ of $f$ is called $p$-$periodic$ if either $f(U) \subseteq U$, or  $f^p(U) \subseteq U$ but  $f^k(U) \not \subseteq U$ for $1\leq k< p$. Further, $U$  is said to be $invariant$ if $p=1$. If an invariant Fatou component $U$ of $f$  satisfies $f^{-1}(U) \subseteq U$, then $U$ is known as a completely invariant Fatou component. For details, see \cite{Milnor2006}.  
\par Let $U_r$ be a component of the preimage $f^{-1}(\mathbb{D}(a, r))$ for $r>0$ and $a \in \mathbb{C}$, and it satisfies if $r_1 <r_2$ implies $U_{r_1} \subset U_{r_2}$. Then there are two possibilities:\\
(a) $\bigcap \limits_{r>0} U_r= \{z\}$  for some $z \in \mathbb{C}$, or\\
(b) $\bigcap \limits_{r>0} U_r= \emptyset$.\\
In case (a),  $f(z)=a$. If  $f'(z) \neq 0$, then $z$ is called an ordinary point. If $f'(z)=0$, then $z$ is called a critical point and $a$ is called a critical value. It says  $f^{-1}$ has an algebraic singularity over $a$. The set of all critical values of $f$ is denoted by $CV(f)$.\\
In case (b), there exists a continuous curve $\alpha : [0, \infty) \to \widehat{\mathbb{C}}$ such that $\lim \limits_{t \to \infty} \alpha(t) = \infty$ and $\lim \limits_{t \to \infty} f(\alpha(t)) = a$, then $a$ is called an asymptotic value of $f$ and $\alpha(t)$ is called an asymptotic curve of $f$. It says $r \mapsto U_r$ defines a transcendental singularity of $f^{-1}$ over $a$. The set of all asymptotic values of $f$ is denoted by $AV(f)$.
A transcendental singularity of $f^{-1}$ over $a$ is called $direct$ if for some $r>0$ we have $f(z) \neq a$ for $z \in U_r$. If no such $r$ exists, then it is called $indirect$, see \cite{iv}. The $set$ $of$ $singularities$ $of$ $f^{-1}$ is defined as
$$ S(f):= clos({CV(f) \cup AV(f)}).$$
The $post$ $singular$ $set$ $of$ $f$ consists of the forward orbits of all points in $S(f)$  and their accumulation points, except the point infinity. It is denoted by $\mathcal{P}(f)$. If the Euclidean distance between $\mathcal{P}(f)$ and $\mathcal{J}(f) \setminus\{\infty\}$ is positive, then $f$ is called $topologically$ $hyperbolic$  \cite{bn}. A point $z_0 \in \mathbb{C}$ is an omitted value of a function $f$ if $z_0 \notin f(\mathbb{C})$.  
 \par  The class of transcendental meromorphic functions $f : \mathbb{C} \to \widehat{\mathbb{C}}$  such
that $f$ has either at least two poles or exactly one pole that is not an omitted value is known as $general$ $transcendental$ $meromorphic$ $function$.   
 The class of general transcendental meromorphic function with a bounded set of singularities of $f^{-1}$ is denoted by $\mathcal{B}$. It is an open question that there are at most two completely invariant Fatou components for a transcendental meromorphic function \cite{Ber93a}. Cao and Wang \cite{Cao} have provided an affirmative answer for meromorphic functions having the set of singularities is finite. Nayak and Zheng \cite{tn} have provided an affirmative answer for general meromorphic functions with at least one omitted value, except  where the function is of infinite order, has a single omitted value and no critical value. In this article, we consider a one-parameter family of functions in $\mathcal{B}$ with no omitted values but infinitely many critical values. It is shown that this family has a simply connected completely invariant Fatou component. Also, for a certain range of parameters, this family has exactly one completely invariant Fatou component and it is equal to $\mathcal{F}(f)$, but it does not contain all the singularities of $f^{-1}$. Moreover, the dynamics of this family are rich and complex.   
\subsection{Structure of the article}
 We consider a  one-parameter family of functions  $$ \mathbb{S} = \Big\{ \displaystyle f_{\lambda}(z) = \frac {\sin{z}}{z^2 + \lambda} \; \text{for} \;  z \in \mathbb{C}: \lambda >0 \Big\}.$$
This article aims to explore the dynamics of $f_{\lambda} \in \mathbb{S}$. The function $f_{\lambda}$ is non-periodic, but it shows some symmetric properties. Also, in Section \ref{2.0}, it will be shown that $f_{\lambda} \in \mathcal{B}$ and it has no omitted values. Further, by Part 2 of Remark \ref{rem9}, the inverse of $f_{\lambda}$ has exactly two indirect singularities over the finite asymptotic value, but no direct singularities. While exploring the real dynamics in Section \ref{2.1},  it is shown that $f_{\lambda}$ is chaotic on some interval of the real line. Further, there are different types of bifurcations that occur for $f_{\lambda}$ at certain values of $\lambda$ (see Sections \ref{2.1} and \ref{2.2}).  In Theorem \ref{t17}, it is proved that for $f_{\lambda}$, there are $\lambda_1$ and $\lambda_2$ (the existence of these two parameters is shown in Section \ref{2.2})  such that the Fatou set of $f_{\lambda}$ is equals to the union of basins of attraction when $\lambda \in (\lambda_1,\lambda_2) \cup (\lambda_2, \infty)$ (see Section $\ref{2.3}$). As a consequence of Theorem \ref{t17}, $f_{\lambda}$ is topologically hyperbolic for $\lambda \in (\lambda_1,\lambda_2)$ (see Remark \ref{r3}). Also, in Theorem \ref{t21}, the characterization of the Julia set $\mathcal{J}(f_{\lambda})$ is shown for $\lambda \in (\lambda_1, \infty)\setminus \{\lambda_2\}$. Further, Theorems \ref{t18}, \ref{tt19} and \ref{t19} describe some topological aspects of the Fatou components of $f_{\lambda}$ for a certain range of $\lambda$ values.
  It is observed that $\mathcal{F}(f_{\lambda})$ has a completely invariant Fatou component containing all indirect singularities of $f^{-1}_{\lambda}$  but not all critical values for $\lambda \geq \lambda_1$. Also, the Julia set $\mathcal{J}(f_{\lambda})$ is connected in $\widehat{\mathbb{C}}$ for $\lambda \geq \lambda_1$. Moreover, $\mathcal{F}(f_{\lambda})$ has a unique completely invariant Fatou component, say $U_{\lambda}$, such that $\mathcal{F}(f_{\lambda}) = U_{\lambda}$ for $\lambda > \lambda_2$.  
\section{Preliminaries}
\subsection{Properties of $f_{\lambda}$} \label{2.0}
The function $f_{\lambda}(z) = \frac{\sin {z}}{(z^2 + \lambda)}$ has only two non-omitted poles. Observe that the function $f_{\lambda}$ is non-periodic and of order one.   
\par The following proposition describes the symmetric properties of the Fatou set $\mathcal{F}(f_{\lambda})$ for $\lambda>0$.
\begin{prop}\label{p1}
Let $\displaystyle f_{\lambda} \in \mathbb{S}$. If $z \in \mathcal{F}(f_{\lambda})$ then   $\{-z, \overline{z}\} \subseteq \mathcal{F}(f_{\lambda})$. 
\end{prop}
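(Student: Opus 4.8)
The plan is to exploit two elementary functional identities satisfied by $f_{\lambda}$. Since $\sin$ is odd and $\lambda$ is real, a direct computation gives
$f_{\lambda}(-z) = \frac{\sin(-z)}{(-z)^2+\lambda} = -\frac{\sin z}{z^2+\lambda} = -f_{\lambda}(z)$,
and, using $\overline{\sin z} = \sin\overline{z}$ together with $\overline{\lambda}=\lambda$, $f_{\lambda}(\overline{z}) = \overline{f_{\lambda}(z)}$. Writing $\sigma_1(z) = -z$ and $\sigma_2(z) = \overline{z}$ for the two involutions of $\widehat{\mathbb{C}}$, this says $f_{\lambda}\circ\sigma_j = \sigma_j\circ f_{\lambda}$ on the domain of $f_{\lambda}$, for $j=1,2$.

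Next I would show by induction that $f_{\lambda}^{n}\circ\sigma_j = \sigma_j\circ f_{\lambda}^{n}$ for every $n$, on the maximal domain where $f_{\lambda}^{n}$ is defined, namely $\widehat{\mathbb{C}}$ with the poles and prepoles of $f_{\lambda}$ removed. The point to check is that this domain is invariant under $\sigma_1$ and $\sigma_2$; this follows from the level-one commutation relation, since the poles $\pm i\sqrt{\lambda}$ are fixed as a set by both $\sigma_j$ and the set of prepoles is pulled back along $f_{\lambda}$, hence also $\sigma_j$-invariant.

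Finally, $\sigma_1$ is a conformal and $\sigma_2$ an anti-conformal automorphism of $\widehat{\mathbb{C}}$, and both are isometries of the spherical metric, so pre- or post-composition with $\sigma_j$ sends normal families to normal families. If $z\in\mathcal{F}(f_{\lambda})$, pick a neighbourhood $V$ of $z$ on which all $f_{\lambda}^{n}$ are defined and $\{f_{\lambda}^{n}\}$ is normal. Then $\sigma_j(V)$ is a neighbourhood of $\sigma_j(z)$ on which every $f_{\lambda}^{n} = \sigma_j\circ f_{\lambda}^{n}\circ\sigma_j$ is defined, and $\{\sigma_j\circ f_{\lambda}^{n}\circ\sigma_j\}$ is normal there, being obtained from the normal family $\{f_{\lambda}^{n}|_V\}$ by composing on both sides with spherical isometries. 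Hence $\sigma_j(z)\in\mathcal{F}(f_{\lambda})$, i.e. $-z,\overline{z}\in\mathcal{F}(f_{\lambda})$.

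The argument is essentially formal. The only place that needs any care — and the closest thing to an obstacle — is the bookkeeping in the second step: one must confirm that the domains of the iterates genuinely respect both symmetries, so that the conjugacy $f_{\lambda}^{n} = \sigma_j\circ f_{\lambda}^{n}\circ\sigma_j$ holds as an identity of partial maps on full symmetric neighbourhoods rather than only on some smaller set.
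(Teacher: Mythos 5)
Your proof is correct and follows essentially the same route as the paper: both rest on the identities $f_{\lambda}^{n}(-z)=-f_{\lambda}^{n}(z)$ and $f_{\lambda}^{n}(\overline{z})=\overline{f_{\lambda}^{n}(z)}$ and the fact that composing with these spherical isometries preserves normality. Your version merely makes explicit the domain bookkeeping and the induction that the paper leaves implicit.
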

\begin{proof}
 If $z \in \mathcal{F}(f_{\lambda})$, then the sequence $\{f^n_{\lambda}\}_{n>0}$ is normal in some neighbourhood of $\bar{z}$ (or $-z$), because $\{f^n_{\lambda}\}_{n>0}$ is normal in some neighbourhood of $z$, and $f^n_{\lambda}(\bar{z})= \overline{f^n_{\lambda}(z)}$ and $f^{n}_{\lambda}({-z})= -f^{n}_{\lambda}({z})$ for all $n \in \mathbb{N}$. Therefore, $\{\bar{z}, -z\} \subset \mathcal{F}(f_{\lambda})$.
\end{proof}
\begin{rem}
Using Proposition \ref{p1}, for $z \in \mathcal{J}(f_{\lambda})$, the set $\{-z, \overline{z}\} \subseteq \mathcal{J}(f_{\lambda})$. 
\end{rem}
\begin{lem}\label{ll1}
Let $f_{\lambda} \in \mathbb{S}$. Every critical point lies on either the real axis or the imaginary axis.
\end{lem}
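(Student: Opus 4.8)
The plan is to turn the critical-point equation into the single transcendental relation $\tan z=\frac{z^{2}+\lambda}{2z}$ and then exploit $\lambda>0$ after separating real and imaginary parts. First I would compute
\[
f_{\lambda}'(z)=\frac{(z^{2}+\lambda)\cos z-2z\sin z}{(z^{2}+\lambda)^{2}},
\]
so that the critical points of $f_{\lambda}$ are exactly the zeros of $g(z):=(z^{2}+\lambda)\cos z-2z\sin z$; note $g$ does not vanish at the poles $\pm i\sqrt{\lambda}$, nor at $z=0$ since $g(0)=\lambda$. If $\cos z=0$ at a zero of $g$, then $\sin z\neq 0$ and $g(z)=-2z\sin z$ forces $z=0$, a contradiction; hence every critical point $z$ satisfies $z\neq 0$ and $\cos z\neq 0$, so
\[
\tan z=\frac{z^{2}+\lambda}{2z}=\frac{1}{2}\Bigl(z+\frac{\lambda}{z}\Bigr).
\]

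Next, write $z=x+iy$ and suppose, for contradiction, that $x\neq 0$ and $y\neq 0$. Using $\tan(x+iy)=\dfrac{\sin 2x+i\sinh 2y}{\cos 2x+\cosh 2y}$ together with $\dfrac12\bigl(z+\lambda/z\bigr)=\dfrac{x}{2}\bigl(1+\tfrac{\lambda}{x^{2}+y^{2}}\bigr)+i\,\dfrac{y}{2}\bigl(1-\tfrac{\lambda}{x^{2}+y^{2}}\bigr)$, I would equate real and imaginary parts, divide the real equation by $x$ and the imaginary one by $y$, and subtract, obtaining
\[
\frac{1}{\cos 2x+\cosh 2y}\Bigl(\frac{\sin 2x}{x}-\frac{\sinh 2y}{y}\Bigr)=\frac{\lambda}{x^{2}+y^{2}}.
\]
Since $y\neq 0$ gives $\cos 2x+\cosh 2y\ge -1+\cosh 2y>0$ and $\lambda>0$, the right-hand side is strictly positive, so $\dfrac{\sin 2x}{x}>\dfrac{\sinh 2y}{y}$.

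The contradiction is now one-dimensional: $\dfrac{\sin 2x}{x}=2\cdot\dfrac{\sin 2x}{2x}\le 2$ for every real $x\neq 0$ because $|\sin t|\le |t|$, while $\dfrac{\sinh 2y}{y}=2\cdot\dfrac{\sinh 2y}{2y}>2$ for every real $y\neq 0$ because $\sinh t>t$ for $t>0$. Hence $2\ge\dfrac{\sin 2x}{x}>\dfrac{\sinh 2y}{y}>2$, which is absurd; therefore $xy=0$, and since $z\neq 0$ the critical point lies on the real axis or on the imaginary axis. The computations are routine — the only points that need care are verifying $\cos z\neq 0$ (so that passing to $\tan z$ and dividing by $\cos 2x+\cosh 2y$ are both legitimate) and noting that the cases $x=0$ and $y=0$ are immediate; I expect the small sign/estimate argument in the last step to be the crux.
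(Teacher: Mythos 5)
Your argument is correct and is essentially the paper's proof: both reduce $f_{\lambda}'(z)=0$ to the relation $\tan z=\tfrac{1}{2}(z+\lambda/z)$ (the paper uses the cotangent form), separate real and imaginary parts, normalize by $x$ and $y$, and contradict $|\sin t/t|<1<\sinh t/t$ for $t\neq 0$. Your only deviation is to subtract the two normalized component equations rather than divide them, which neatly sidesteps having to rule out a vanishing denominator (the case $x^{2}+y^{2}=\lambda$), and you also supply the preliminary checks $z\neq 0$, $\cos z\neq 0$ that the paper leaves implicit.
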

\begin{proof}
If $z$ is a critical point of $f_{\lambda}$, then $f'_{\lambda}(z)=0$. Further, it gives
\begin{equation*}
       \cot{z}= \frac{2z}{z^2 + \lambda}.
       \label{1}
\end{equation*} 
   If $x\neq 0$ and $y \neq 0$, then 
\begin{equation*}  \frac{\cos{(x+iy)} }{\sin{(x+iy)}} = \frac{2(x+iy)}{(\lambda +x^2 -y^2) + i2xy}.
  \label{2}
\end{equation*}
 Further, we have
\begin{equation*} 
   \frac{\sin{2x} + i \sinh{2y}}{\cos{2x}- \cosh{2y}} = \frac{-2x(\lambda +x^2 +y^2) 
   +i2y(\lambda -x^2-y^2)}{(\lambda +x^2 -y^2)^2 +(2xy)^2}.\label{3}
\end{equation*}
By comparing real and imaginary parts, we have
\begin{equation} 
   \frac{\sin{2x} }{\cos{2x}- \cosh{2y}} = \frac{-2x(\lambda +x^2 +y^2) }{(\lambda +x^2 -y^2)^2 +(2xy)^2},\label{4}
\end{equation}
   and
\begin{equation} 
   \frac{ \sinh{2y}}{\cos{2x}- \cosh{2y}} = \frac{2y(\lambda -x^2-y^2)}{(\lambda +x^2 -y^2)^2 +(2xy)^2}.
   \label{5}
\end{equation}
After dividing Equation (\ref{4}) by Equation (\ref{5}), we get
\[\frac{(\sin{2x})/2x}{(\sinh{2y})/2y}= \frac{x^2 +y^2 + \lambda}{x^2 +y^2 - \lambda}.\] 
Since 
   \[\left|\frac{\sin{2x}/2x}{\sinh{2y}/2y} \right|<1\] and \[\left|\frac{x^2 +y^2 + \lambda}{x^2 +y^2 - \lambda} \right|>1\] for $\lambda>0$,  
    $f_{\lambda}(x+iy)$ has no critical points for $x \neq 0$  and $y \neq 0$.  Hence, the result holds.  
\end{proof}
Using Lemma \ref{ll1}, the following proposition discusses the singularities of $f^{-1}_{\lambda}$. Further, it infers that $f_{\lambda} \in \mathcal{B}$ for $\lambda>0$. 
\begin{prop}\label{p2}
	Let $\displaystyle f_{\lambda} \in \mathbb{S}$. The set of singularities of $f^{-1}_{\lambda}$ is bounded.  Further, the set $CV(f_{\lambda})$ is not finite.
\end{prop}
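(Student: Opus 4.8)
\noindent\emph{Proof strategy.} The plan is to treat the two assertions separately, and for the first to bound $CV(f_{\lambda})$ and the finite part of $AV(f_{\lambda})$ independently, since $S(f^{-1}_{\lambda})=clos(CV(f_{\lambda})\cup AV(f_{\lambda}))$. For the critical values I would use Lemma~\ref{ll1}: every critical point lies on $\mathbb{R}$ or on $i\mathbb{R}$. On the real axis $|f_{\lambda}(x)|=|\sin x|/(x^{2}+\lambda)\le 1/\lambda$ for every $x\in\mathbb{R}$, so all critical values arising from real critical points lie in $\mathbb{D}(0,1/\lambda)$. On the imaginary axis, putting $z=iy$ turns the critical point equation $\cot z=2z/(z^{2}+\lambda)$ into $\coth y=2y/(y^{2}-\lambda)$; since $\coth y\to\pm1$ and $2y/(y^{2}-\lambda)\to 0$ as $y\to\pm\infty$, while $2y/(y^{2}-\lambda)\to\pm\infty$ as $y\to\pm\sqrt{\lambda}$, the solutions of this equation stay in a compact subset of $\mathbb{R}\setminus\{\pm\sqrt{\lambda}\}$, so $f_{\lambda}$ maps the imaginary critical points into a bounded set. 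Hence $CV(f_{\lambda})$ is bounded.

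Next I would show that $0$ is the only finite asymptotic value (it is one, because $f_{\lambda}(x)\to 0$ as $x\to\pm\infty$ along $\mathbb{R}$). Suppose $a\in\mathbb{C}\setminus\{0\}$ were a finite asymptotic value along a curve $\gamma(t)\to\infty$. Then $\sin\gamma(t)=f_{\lambda}(\gamma(t))(\gamma(t)^{2}+\lambda)\to\infty$, and from $|\sin z|^{2}=\sin^{2}(\operatorname{Re}z)+\sinh^{2}(\operatorname{Im}z)$ this forces $|\operatorname{Im}\gamma(t)|\to\infty$; by continuity, and by the symmetry $f_{\lambda}(\bar z)=\overline{f_{\lambda}(z)}$, we may assume $\operatorname{Im}\gamma(t)\to+\infty$. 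In the upper half-plane $\sin z=\tfrac{i}{2}e^{-iz}(1+o(1))$ as $\operatorname{Im}z\to\infty$, so $f_{\lambda}(\gamma(t))\to a$ becomes $e^{-i\gamma(t)}/(\gamma(t)^{2}+\lambda)\to -2ia\neq 0$; taking a continuous branch of the logarithm along $\gamma$ gives $-i\gamma(t)-\log(\gamma(t)^{2}+\lambda)=O(1)$, i.e.\ $\gamma(t)=2i\log\gamma(t)+O(1)$. Since $\operatorname{Im}\gamma(t)>0$ eventually, this yields $\operatorname{Re}\gamma(t)=-2\arg\gamma(t)+O(1)=O(1)$, hence $|\gamma(t)|\sim\operatorname{Im}\gamma(t)$ and $\operatorname{Im}\gamma(t)=2\log|\gamma(t)|+O(1)=2\log(\operatorname{Im}\gamma(t))+O(1)$, contradicting $\operatorname{Im}\gamma(t)\to\infty$. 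Therefore $AV(f_{\lambda})\cap\mathbb{C}=\{0\}$, and so $S(f^{-1}_{\lambda})=clos(CV(f_{\lambda})\cup\{0\})$ is a bounded subset of $\mathbb{C}$.

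For the infinitude of $CV(f_{\lambda})$: on each interval $(k\pi,(k+1)\pi)$ with $k\ge 1$, the strictly decreasing function $\cot x$ attains every real value while $2x/(x^{2}+\lambda)\in(0,2/(k\pi))$, so there is a unique real critical point $x_{k}$ in this interval; from $\cot x_{k}\to 0^{+}$ one gets $x_{k}=(k+\tfrac12)\pi-\epsilon_{k}$ with $\epsilon_{k}\to 0$, hence $\sin x_{k}=(-1)^{k}\cos\epsilon_{k}$ and $f_{\lambda}(x_{k})=(-1)^{k}\cos\epsilon_{k}/(x_{k}^{2}+\lambda)$. These values are nonzero, alternate in sign, and have strictly decreasing modulus for large $k$, so they are pairwise distinct; thus $CV(f_{\lambda})$ is infinite (and, as already noted, it accumulates at $0$).

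The step I expect to require the most care is excluding finite nonzero asymptotic values, i.e.\ ruling out an asymptotic curve that leaves every horizontal strip yet keeps $f_{\lambda}$ bounded away from $0$ and $\infty$: the logarithmic estimate $\gamma(t)=2i\log\gamma(t)+O(1)$ is the crux, and one must verify carefully that $\operatorname{Im}\gamma(t)\to+\infty$ holds along the whole curve — not merely along a subsequence — which follows from continuity once $|\operatorname{Im}\gamma(t)|\to\infty$, since otherwise $\operatorname{Im}\gamma$ would take the value $0$ at arbitrarily large $t$.
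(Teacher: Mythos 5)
Your proof is correct, and its skeleton matches the paper's: reduce to critical points on $\mathbb{R}\cup i\mathbb{R}$ via Lemma~\ref{ll1}, bound the critical values on each axis, show that $0$ is the only finite asymptotic value, and extract infinitely many distinct critical values from the real critical points. Two differences are worth recording. First, where the paper pins down the critical values explicitly (as $\cos p_{\lambda,n}/(2p_{\lambda,n})$ on $\mathbb{R}$ and $\pm i\cosh c_{\lambda}/(2c_{\lambda})$ on $i\mathbb{R}$, via a sign analysis of $f'_{\lambda}$ and $h'_{\lambda}$), you get boundedness more cheaply from $|f_{\lambda}(x)|\le 1/\lambda$ on $\mathbb{R}$ and a compactness argument for the solutions of $\coth y=2y/(y^{2}-\lambda)$; this suffices for the proposition, although the explicit description is what the paper reuses later. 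Second, and more substantially, your exclusion of nonzero finite asymptotic values is genuinely stronger than the paper's. The paper's dichotomy (``$|\sin\gamma(t)|$ bounded $\Rightarrow l=0$, $|\sin\gamma(t)|\to\infty\Rightarrow l=\infty$'') does not address the borderline regime in which $|\sin\gamma(t)|$ grows at the same rate as $|\gamma(t)^{2}+\lambda|$ --- along $\gamma(t)=t+2i\log t$ one has $|f_{\lambda}(\gamma(t))|\to 1/2$, so only the non-convergence of the phase saves the conclusion --- whereas your estimate $\gamma(t)=2i\log\gamma(t)+O(1)$, forcing $\operatorname{Re}\gamma(t)=O(1)$ and then $\operatorname{Im}\gamma(t)=2\log\operatorname{Im}\gamma(t)+O(1)$, rules out exactly this case. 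The one small point to tighten is the claimed uniqueness of the critical point in $(k\pi,(k+1)\pi)$: both $\cot x$ and $2x/(x^{2}+\lambda)$ are decreasing there, so uniqueness needs the extra remark that $|(\cot x)'|\ge 1$ dominates the derivative of the right-hand side on the relevant subinterval; but uniqueness is not actually needed, since the values $f_{\lambda}(x_{k})$ are nonzero and tend to $0$, which already forces $CV(f_{\lambda})$ to be infinite.
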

\begin{proof} 
	 On the real axis, we have
	\[ f'_{\lambda}(x) = \displaystyle \frac{(x^2 +\lambda)\cos{x} -2x \sin{x}}{ (x^2 +\lambda)^2}.\]
    Let $N(x)$ represent the numerator of the function $f'_{\lambda}(x)$. Now,  $N'(x)= (-\sin{x})(x^2 +\lambda +2)$ and  
\begin{equation*}
				N'(x) 
\begin{cases}
                       =0 &  \text{at} \; x =2n\pi, \\
                       <0 &   \text{for} \; x \in ( 2n\pi, (2n+1)\pi),\\
                       =0 &  \text{at} \; x =(2n+1)\pi, \\
					>0 &  \text{for} \; x \in ( (2n+1)\pi, (2n+2)\pi),   
\end{cases}
\end{equation*}
where $n \in \mathbb{Z}$. Also, note that   $N(2n \pi)> 0$ and $N((2n+1) \pi)< 0$. 
Therefore, there exists $p_{\lambda, n} \in (n\pi, (n+1)\pi)$ such that
\begin{equation}
		f'_{\lambda}(x) 
		\begin{cases}
			>0 &  \text{for} \; x \in [ 2n\pi, p_{\lambda, 2n}),   \\
			=0 &  \text{at} \; x=p_{\lambda, 2n}, \\
			<0 &   \text{for} \; x \in (p_{\lambda, 2n}, (2n+1)\pi],
		\end{cases}
		\label{g1}
\end{equation} 
\begin{align*}
		f'_{\lambda}(x) 
		\begin{cases}
			<0 &  \text{for} \; x \in [(2n+1)\pi,p_{\lambda, 2n +1}),   \\
			=0 &  \text{at} \; x=p_{\lambda, 2n +1}, \\
			>0 &   \text{for} \; x \in (p_{\lambda, 2n +1}, (2n+2)\pi],
		\end{cases}
\end{align*}
where $n \in \mathbb{Z}$. It implies $f_{\lambda}(x)$ has a critical point $p_{\lambda,n}$ in the interval $ (n\pi, (n+1)\pi)$ for each $n \in \mathbb{Z}$. Also, the set $\left\{ \frac{\cos{p_{\lambda,n}}}{2p_{\lambda,n}} : n \in \mathbb{Z} \right\} \subseteq CV(f_{\lambda}(x))$ and it is bounded.\\ 
 On the imaginary axis,  $f_{\lambda}(iy) = i\frac{\sinh{y}}{ \lambda -y^2}$ for $y \in \mathbb{R}\setminus \{-\sqrt{\lambda},\sqrt{\lambda}\}$.  
 Let $h_{\lambda}(y)=-if_{\lambda}(iy)$.  We have $h'_{\lambda}(y) =\displaystyle \frac{(\lambda -y^2)\cosh{y} +2y \sinh{y}}{(\lambda -y^2)^2}$. Observe that $h'_{\lambda}(y)>0$ when $y \in (0, \sqrt{\lambda})$. Let $N(y)$ represent the numerator of the function $h'_{\lambda}(y)$. Now, $N(\sqrt{\lambda})>0$ and $N'(y)= (\lambda +2 -y^2)\sinh{y}$. So, $N'(y)>0$ for $y \in (\sqrt{\lambda},\sqrt{\lambda +2})$,  $N'(\sqrt{\lambda +2})=0$ and  $N'(y)<0$ for $y \in (\sqrt{\lambda +2}, \infty)$.
 Therefore, there exists  $c_{\lambda}>0$ such that
\begin{equation}
		h'_{\lambda}(y) 
		\begin{cases}
			<0 & \text{for} \; y  \in (-\infty, -c_{\lambda} ),\\
			=0 & \text{for} \; y=-c_{\lambda}, \\
			>0 &  \text{for} \; y \in (-c_{\lambda}, c_{\lambda})\setminus \{-\sqrt{\lambda},\sqrt{\lambda}\},   \\
			=0 &  \text{for} \; y=c_{\lambda}, \\
			<0 &   \text{for} \; y \in (c_{\lambda}, \infty),
		\end{cases}
		\label{g2}
\end{equation}
since $h'_{\lambda}(y)$ is even.	
 Therefore, the set $\left \{ -\frac{\cosh{c_{\lambda}}}{2c_{\lambda}}, \frac{\cosh{c_{\lambda}}}{2c_{\lambda}} \right\} \subset CV(h_{\lambda}(y))$.\\
 Observe that the real and imaginary axes are invariant under $f_{\lambda}$, i.e., $f_{\lambda}(\mathbb{R}) \subseteq \mathbb{R}$ and $f_{\lambda}(i\mathbb{R}) \subseteq i\mathbb{R}$. Consequently, by Lemma \ref{ll1}, $$CV(f_{\lambda})= \left\{ \frac{\cos{p_{\lambda,n}}}{2p_{\lambda,n}} : n \in \mathbb{Z} \right\}  \bigcup\left \{ -i\frac{\cosh{c_{\lambda}}}{2c_{\lambda}}, i\frac{\cosh{c_{\lambda}}}{2c_{\lambda}} \right\}.$$
 Since $f_{\lambda}(x) \to 0$ as $x \to \infty$, $0 \in AV(f_{\lambda})$.
 Suppose $l$ is a non-zero finite asymptotic value of $f_{\lambda}$. So, there exists an asymptotic curve $\gamma(t)$ such that $f_{\lambda}(\gamma(t)) \to l$ as $t \to \infty$. Also, $|f_{\lambda}(\gamma(t))| \to |l|$ as $t \to \infty$. Let $\gamma(t)= \gamma_1(t)+i \gamma_2(t)$. Then $|\sin(\gamma(t))| = \sqrt{\sin^2(\gamma_1(t)) + \sinh^2(\gamma_2(t))}$. Also, it is easy to see that $ |\gamma^{2}(t)+\lambda| \to \infty$ as $t \to \infty$. Observe that if $|\sin(\gamma(t))|$ is bounded as $t \to \infty$, then $l=0$, therefore, it contradicts $l \neq 0$. Also, if $|\sin(\gamma(t))| \to \infty$ as $t \to \infty$, then $l = \infty$, therefore, it contradicts $l$ is finite. Consequently, $$AV(f_{\lambda})= \{0\}.$$ Hence,  the result holds.   
 \end{proof}
 \begin{cor}
 Let $ f_{\lambda} \in \mathbb{S}$. There are no omitted values of $f_{\lambda}$. 
\end{cor}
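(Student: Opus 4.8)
The plan is to combine the determination of $AV(f_{\lambda})$ from Proposition \ref{p2} with the classical theorem of Iversen: since $\infty$ is the only (essential) singularity of the transcendental meromorphic function $f_{\lambda}$, every finite value that is omitted by $f_{\lambda}$ on $\mathbb{C}$ is in particular omitted in a neighbourhood of $\infty$, and therefore is an asymptotic value of $f_{\lambda}$; equivalently, it gives rise to a transcendental singularity of $f^{-1}_{\lambda}$ over that value. Hence any finite omitted value of $f_{\lambda}$ must belong to $AV(f_{\lambda})$.

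By Proposition \ref{p2} we have $AV(f_{\lambda}) = \{0\}$, so the only candidate for a finite omitted value is $0$. But $f_{\lambda}(0) = \frac{\sin 0}{0 + \lambda} = 0$, so $0 \in f_{\lambda}(\mathbb{C})$ and is not omitted. It remains to check that $\infty$ is not omitted either: the equation $z^{2} + \lambda = 0$ has the solutions $z = \pm i\sqrt{\lambda}$, and $\sin(\pm i\sqrt{\lambda}) = \pm i\sinh\sqrt{\lambda} \neq 0$ since $\sqrt{\lambda} > 0$; thus $\pm i\sqrt{\lambda}$ are genuine poles of $f_{\lambda}$, whence $\infty \in f_{\lambda}(\mathbb{C})$. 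Combining these three observations yields that $f_{\lambda}$ has no omitted value.

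There is essentially no obstacle here beyond invoking Iversen's theorem correctly. The only point requiring a little care is that the theorem produces, from an omitted value, an \emph{asymptotic} value — a finite one in the way we use it — which is exactly why the value $\infty$ has to be treated separately, by the elementary remark that $f_{\lambda}$ has honest poles.
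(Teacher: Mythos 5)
Your proof is correct and is essentially the argument the paper intends (the corollary is stated without proof, as an immediate consequence of Proposition \ref{p2}): Iversen's theorem reduces the question to the asymptotic value $0$, which is attained since $f_{\lambda}(n\pi)=0$, and $\infty$ is attained at the genuine poles $\pm i\sqrt{\lambda}$. Note that under the paper's definition an omitted value is a point of $\mathbb{C}$, so your separate check for $\infty$ is a harmless extra.
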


\begin{prop}
Let $ f_{\lambda} \in  \mathbb{S}$. The inverse of $f_{\lambda}$ has two indirect singularities over $0$. 
\end{prop}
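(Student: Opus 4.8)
The plan is to show that the two transcendental singularities of $f_\lambda^{-1}$ lying over the asymptotic value $0$ are both indirect, by exhibiting, for each such singularity, a sequence of poles of $f_\lambda$ that is forced into every tract $U_r$. Recall from Proposition \ref{p2} that $AV(f_\lambda)=\{0\}$ and that $0$ is approached along the real axis as $x\to+\infty$ and as $x\to-\infty$; by the symmetry $f_\lambda(-z)=-f_\lambda(z)$ of Proposition \ref{p1} it suffices to analyse the singularity associated with the tract containing the positive real axis, the other being its mirror image. So fix $r>0$ small and let $U_r$ be the component of $f_\lambda^{-1}(\mathbb{D}(0,r))$ that contains a ray $[R,\infty)$ of the positive real axis for $R$ large; one checks $U_{r_1}\subset U_{r_2}$ when $r_1<r_2$ and $\bigcap_{r>0}U_r=\emptyset$, so this indeed defines a transcendental singularity over $0$.

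First I would pin down the geometry of $U_r$. On the real axis $f_\lambda(x)=\sin x/(x^2+\lambda)\to 0$, and near a real point $n\pi$ (with $n$ large) $f_\lambda$ has a simple zero, so a neighbourhood of $n\pi$ maps into $\mathbb{D}(0,r)$; moreover the strips around the real axis get mapped into $\mathbb{D}(0,r)$ once $|\operatorname{Re} z|$ is large and $|\operatorname{Im} z|$ is bounded, because there $|\sin z|$ stays bounded while $|z^2+\lambda|\to\infty$. The key point is that $U_r$ — being a single connected component — must contain, for $R$ large enough, the whole ray $[R,\infty)$ together with small neighbourhoods of all the zeros $k\pi$ with $k\pi>R$. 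In particular $U_r$ is an unbounded connected set that, for $R$ large, contains every zero $k\pi$ of $\sin z$ with $k\pi > R$.

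Next I would produce the poles inside $U_r$. The poles of $f_\lambda$ in $\mathbb{C}$ are exactly $\pm i\sqrt\lambda$, which is a finite set — so there are no poles to be found near infinity, and the naive "sequence of poles in the tract" argument does not apply directly. The correct mechanism is instead to show directly that $f_\lambda$ \emph{takes the value $0$ inside $U_r$}: indeed $k\pi\in U_r$ for all large $k$ and $f_\lambda(k\pi)=0$. By definition, a transcendental singularity of $f_\lambda^{-1}$ over $a$ is \emph{direct} if there is some $r>0$ with $f_\lambda(z)\neq a$ for all $z\in U_r$, and \emph{indirect} otherwise. Since every $U_r$ contains points $k\pi$ (for $k$ large, depending on $r$ only through how large $R$ must be) at which $f_\lambda$ equals $0$, no such $r$ exists, and the singularity is indirect. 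Running the same argument with the positive real axis replaced by the negative real axis, or invoking the oddness symmetry, handles the second singularity over $0$, and Proposition \ref{p2} guarantees these are the only transcendental singularities (the only asymptotic value is $0$).

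The main obstacle, and the step requiring care, is the topological claim that the component $U_r$ of $f_\lambda^{-1}(\mathbb{D}(0,r))$ containing the tail of the positive real axis genuinely contains the zeros $k\pi$ for all large $k$ — equivalently, that these zeros are not cut off into separate components by the poles $\pm i\sqrt\lambda$ or by the critical points $p_{\lambda,n}$ on the real axis. To handle this I would exhibit an explicit connected subset of $f_\lambda^{-1}(\mathbb{D}(0,r))$ joining $k\pi$ to $(k+1)\pi$: for instance the horizontal segment $\{x+i\varepsilon : k\pi\le x\le (k+1)\pi\}$ for a suitable small $\varepsilon=\varepsilon(r)>0$, on which one estimates $|\sin(x+i\varepsilon)|\le \sqrt{1+\sinh^2\varepsilon}$ and $|(x+i\varepsilon)^2+\lambda|\ge x^2-\lambda-1$, giving $|f_\lambda(x+i\varepsilon)|<r$ once $x\ge R(r)$. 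Chaining these segments shows all of $[R,\infty)$-adjacent zeros lie in one component, completing the argument; the remaining verifications ($U_{r_1}\subset U_{r_2}$ and $\bigcap_r U_r=\emptyset$) are routine.
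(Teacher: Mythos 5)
Your argument is correct and takes essentially the same route as the paper's own proof: both identify the two tracts over $0$ as the components of $f_\lambda^{-1}(\mathbb{D}(0,r))$ containing tails of the positive and negative real axis, and conclude indirectness from the fact that each such component contains zeros $n\pi$ of $f_\lambda$. The connectivity worry you flag as the ``main obstacle'' is in fact moot, since the zeros $k\pi$ with $k\pi\ge R$ already lie on the connected ray $[R,\infty)$, which is itself contained in $f_\lambda^{-1}(\mathbb{D}(0,r))$ because $|f_\lambda(x)|\le 1/(x^2+\lambda)<r$ there.
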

\begin{proof}
Using Proposition \ref{p2}, $0$ is a transcendental singularity of $f^{-1}_{\lambda}$. Also, $f_{\lambda}(x) \to 0$ as $|x| \to \infty$. Further, the set  $f^{-1}_{\lambda}(0)=  \{ n \pi : n \in \mathbb{Z}\}$. Therefore, there exist $U^{-}(r)$ and $U^{+}(r)$  components of $f_{\lambda}^{-1}(\mathbb{D}(0, r))$ such that they contain a tail of negative real axis and positive real axis, respectively. Consequently, there exists no $r$ such that $f(z)\neq 0$ for $z \in U^{-}(r) ( \text{or} \; U^{+}(r))$. Therefore, the result holds.
\end{proof}
\subsection{Dynamics of $f_{\lambda}(x)$ and bifurcations}	\label{2.1}
This section explores the real dynamics of the function $f_{\lambda} \in \mathbb{S}$.
\par	The function $f_{\lambda}(x)$ is odd and $f_{\lambda}(x) \rightarrow 0$ as $|x| \rightarrow \infty$. Also,
 \begin{equation*}
f_{\lambda}(x) 
\begin{cases}
	=0 &  \text{at} \; x =2n\pi, \\
	>0 &  \text{for} \;  x \in (2n\pi, (2n+1)\pi),   \\
	=0 &  \text{at} \; x =(2n+1)\pi, \\
	<0 &   \text{for} \; x \in ((2n+1)\pi,(2n+2)\pi),
\end{cases}
\label{e1}
\end{equation*}
where $n \in \mathbb{Z}$.
We have
$$ \displaystyle f''_{\lambda}(x)=\displaystyle \frac{(\sin{x})(8x^2 - (x^2 + \lambda)^2 -2(x^2 + \lambda)) - 4x(x^2 + \lambda) \cos{x}}{ (x^2 +\lambda)^3}$$
for $x \in \mathbb{R}$. Let $N(x)$ represent the numerator of the function $f''_{\lambda}(x)$. 
Then we have 
$$N'(x) = 12x \sin{x} - (x^2 + \lambda)(x^2 + \lambda +6)(\cos{x})$$ and $$N''(x) = ((x^2 + \lambda +6)\sin{x} -4x \cos{x})(x^2 + \lambda) + 12\sin{x}.$$ Since $(x^2 + \lambda +2)\cos{x} +6x \sin{x} >0$ for $x \in (0, \frac{\pi}{2})$, $N''(x)>0$ for $x \in (0, \frac{\pi}{2})$. Note that $N'(0)<0$, $N'(\frac{\pi}{2})>0$, and $N'(x)>0$ for $x \in (\frac{\pi}{2}, \pi)$. Therefore, there exists $n_{\lambda}>0$ such that
\begin{equation}
f''_{\lambda}(x) 
\begin{cases}
	<0 &  \text{for} \; x \in ( 0,n_{\lambda}),   \\
	=0 &  \text{at} \; x=n_{\lambda}, \\
	>0 &   \text{for} \; x \in (n_{\lambda}, \pi),
\end{cases}
\label{e1}
\end{equation}
because $f''_{\lambda}(0)= 0$ and $f''_{\lambda}(\pi)>0$.
\par Consider $ \Psi(x) = \frac{\sin x}{x} -x^2$ for $x \in (0,\infty)$, and clearly, $\Psi(0)=1$. Now, $\displaystyle \Psi'(x)= \frac{x \cos x - \sin x -2x^3}{x^2}$ and $\Psi'(0)=0$. Let $N(x)$ represent the numerator of the function $ \Psi'(x)$.
Then  $N'(x)= -x^2( \frac{\sin{x}}{x}+6) <0$ for $x \in (0,\infty)$.  Therefore, $\Psi'(x)<0$ and hence, $\Psi(x)$  is strictly decreasing.\\ 
Let $x_{0} \in (0, 1)$ be the positive solution of $x \Psi(x)=0$.\\
 Consider $\phi(x)= \frac{\cos{x}}{(\Psi(x)+x^2)} - \frac{2x \sin{x}}{(\Psi(x)+x^2)^2}$ for $x \in (0, x_{0})$. Observe that $ \phi(0)=1$.
Now, $\phi'(x)$ equals to 
  $$\frac{ -(x^2  +\Psi(x))((4x+ \Psi'(x))(\cos{x})+ (\sin{x})(\Psi(x)  +x^2  +2)) +(2x\sin{x})(2x + \Psi'(x))}{(\Psi(x)+x^2)^3}.$$ 
  Since $x(6x- \sin{x})>0$ on $(0, \frac{\pi}{2})$ and it implies $x\cos{x}- \sin{x}+2x^3>0$. Therfore,  $ 4x+\Psi'(x) >0$.  Also, $x\cos{x}-\sin{x}<0 $ on $(0, x_0)$     implies that $2x +\Psi'(x)<0$. Consequently, $\phi'(x) <0$ for $x \in (0, x_{0})$ and hence, $\phi(x)$ is strictly decreasing. Note that $\phi(x_{0})<-1$. Consequently, there exists $x^* \in (0, x_{0})$ such that
\begin{equation}
  	\phi(x)  
  	\begin{cases}
  		>-1, \; \text{but} \, <0 &  \text{for} \; x \in ( 0,x^*),   \\
  		=-1 &  \text{at} \; x=x^*, \\
  		<-1 &   \text{for} \; x \in (x^*, x_{0}).
  	\end{cases}
  	\label{e3}
  \end{equation}
Set $\lambda^* = \Psi(x^*)=(x^*)^2 -\cos{x^*}$. 
\par 
The following proposition examines the multiplier of fixed points of $f_{\lambda}$.  
\begin{prop} \label{t1}
Let $ \displaystyle f_{\lambda}(x) = \frac {\sin{x}}{x^2 + \lambda}$ for $\lambda>0$.  There exists $\lambda^* (\approx 0.117)$ as follows:  
\begin{enumerate}[label=(\alph*)]
\item If $0 < \lambda <\lambda^*$, then $f_{\lambda}(x)$ has two non-zero repulsive fixed points, and $0$ is a repulsive fixed point.
\item If $\lambda = \lambda^*$, then  $f_{\lambda}(x)$ has two non-zero rationally indifferent fixed points $-x^*$ and $x^*$, and $0$ is a repulsive fixed point.
\item If $\lambda^* < \lambda < 1$, then $f_{\lambda}(x)$ has two non-zero attractive fixed points $-x_{\lambda}$ and $x_{\lambda}$, and $0$ is a repulsive fixed point.
\item If $\lambda=1$, then  $0$ is the rationally indifferent fixed point of  $f_{\lambda}(x)$.
\item If $\lambda>1$, then  $0$ is the attractive fixed point of  $f_{\lambda}(x)$.
\end{enumerate}
\end{prop}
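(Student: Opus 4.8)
The plan is to separate the fixed point at the origin from the non-zero fixed points. Since $\sin 0=0$, the origin is a fixed point for every $\lambda>0$, and the formula for $f_\lambda'$ recorded above gives $f_\lambda'(0)=1/\lambda$. Hence $0$ is repulsive when $0<\lambda<1$, has multiplier $1$ and so is rationally indifferent when $\lambda=1$, and is attractive when $\lambda>1$; this settles all the assertions about $0$ in (a)--(e).

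For the non-zero fixed points, observe that $f_\lambda(x)=x$ with $x\neq 0$ is equivalent, after dividing by $x$, to $\frac{\sin x}{x}=x^2+\lambda$, that is, to $\Psi(x)=\lambda$, where $\Psi(x)=\frac{\sin x}{x}-x^2$. Since $\Psi$ has already been shown to be strictly decreasing on $(0,\infty)$ with $\Psi(0)=1$ (a supremum, not attained) and $\Psi(x_0)=0$, the equation $\Psi(x)=\lambda$ has exactly one positive solution $x_\lambda$, necessarily lying in $(0,x_0)$, when $0<\lambda<1$, and no positive solution when $\lambda\ge 1$. As $f_\lambda$ is odd, the full set of non-zero real fixed points is therefore empty for $\lambda\ge 1$ and equals $\{-x_\lambda,x_\lambda\}$ for $0<\lambda<1$, with equal multipliers at $\pm x_\lambda$; this already yields the ``only the origin'' part of (d) and (e).

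It remains to compute the multiplier $f_\lambda'(x_\lambda)$ for $0<\lambda<1$. Writing $f_\lambda'(x)=\frac{\cos x}{x^2+\lambda}-\frac{2x\sin x}{(x^2+\lambda)^2}$ and using the fixed-point identity $x_\lambda^2+\lambda=x_\lambda^2+\Psi(x_\lambda)$ shows immediately that $f_\lambda'(x_\lambda)=\phi(x_\lambda)$, where $\phi$ is the function analysed just before the statement. Next, strict monotonicity of $\Psi$, together with $\Psi(x^*)=\lambda^*$, $\Psi(x_0)=0$ and $\Psi(0)=1$, locates $x_\lambda$: one has $x_\lambda\in(x^*,x_0)$ precisely when $0<\lambda<\lambda^*$, $x_\lambda=x^*$ precisely when $\lambda=\lambda^*$, and $x_\lambda\in(0,x^*)$ precisely when $\lambda^*<\lambda<1$. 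Feeding these into the information on $\phi$ in (\ref{e3}) --- so that $\phi(x_\lambda)<-1$ when $x_\lambda\in(x^*,x_0)$, $\phi(x^*)=-1$, and $|\phi(x_\lambda)|<1$ when $x_\lambda\in(0,x^*)$ --- gives repulsive non-zero fixed points in case (a), rationally indifferent ones with multiplier $-1$ in case (b), and attractive ones in case (c). Combined with the analysis of $0$, this proves the proposition; the numerical value $\lambda^*=(x^*)^2-\cos x^*\approx 0.117$ follows from the location of $x^*$ in (\ref{e3}).

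I do not expect a genuine obstacle: every analytic ingredient --- strict monotonicity of $\Psi$ and of $\phi$, their boundary values, and the positions of $x^*$ and $x_0$ --- is established in the paragraphs preceding the statement, so the work is the bookkeeping of the equivalences relating the position of $\lambda$ relative to $\lambda^*$ and $1$ to that of $x_\lambda$ relative to $x^*$ and $x_0$, plus the one-line substitution giving $f_\lambda'(x_\lambda)=\phi(x_\lambda)$. The only conceptual point to flag is that the multipliers $1$ and $-1$ are roots of unity, so the corresponding fixed points are rationally (parabolic) indifferent rather than irrationally so.
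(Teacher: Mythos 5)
Your proposal is correct and follows essentially the same route as the paper: both reduce the non-zero fixed points to $\Psi(x_\lambda)=\lambda$, identify the multiplier as $f_\lambda'(x_\lambda)=\phi(x_\lambda)$, and read off the three regimes from the monotonicity of $\Psi$ and the sign pattern of $\phi$ in (\ref{e3}), with the origin handled by $f_\lambda'(0)=1/\lambda$. The only (harmless) difference is that you count the non-zero fixed points directly from the strict monotonicity of $\Psi$ on $(0,\infty)$, whereas the paper reaches the same count by analysing the shape of $g_\lambda(x)=f_\lambda(x)-x$.
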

\begin{proof}
Define $g_{\lambda}(x)= f_{\lambda}(x)-x$ for $x \in \mathbb{R}$. Observe that $g_{\lambda}(x)=0$ if and only if $f_{\lambda}(x)=x$. Note that $g'_{\lambda}(x)= f'_{\lambda}(x)-1$ and $g''_{\lambda}(x)= f''_{\lambda}(x)$. 
 Further,
\begin{enumerate}
\item[(i)]  $|g_{\lambda}(x)|>0$ for $|x|\geq \pi$;
\item[(ii)] $g_{\lambda}(x)$ has unique local maximum at $\tilde{x}= \tilde{x}(\lambda) \in (0,\pi)$ and it has  unique local minimum at $-\tilde{x}$ for $\lambda \in (0,1)$;
\item[(iii)] $|g_{\lambda}(x)|>0$ on $0< |x|<\pi$ for $\lambda \geq1$.
\end{enumerate}
\par Since $|f_{\lambda}(x)|<1$ when $|x|\geq \pi$, it follows that $|g_{\lambda}(x)|>0$ for $|x|\geq \pi$ and thus, (i) holds. Note that $g'_{\lambda}(0) (= \frac{1}{\lambda}-1)>0$ for $\lambda \in (0,1)$ and $g'_{\lambda}(\pi) (=\frac{-1}{(\pi^2 + \lambda)}-1)<0$ for all $\lambda>0$.  So, by (\ref{e1}) and continuity of $g'_{\lambda}(x)$, there exists a unique $\tilde{x}=\tilde{x}(\lambda)>0$ such that $g'_{\lambda}(\tilde{x})=0$, $g'_{\lambda}(x)>0$ when $x \in (0, \tilde{x})$ and  $g'_{\lambda}(x)<0$ when $x \in (\tilde{x},\pi)$ for $\lambda \in (0,1)$. Observe that $g_{\lambda}(x)$ is odd. Thus, $g_{\lambda}(x)$ attains local maximum value at $\tilde{x}$ and it attains local minimum value at $-\tilde{x}$ for $\lambda \in (0,1)$. Therefore, (ii) holds. Since $|f_{\lambda}(x)|<1$ for $0< |x|<\pi$ when $\lambda \geq1$, (iii) holds. 
\par  Note that $g_{\lambda}(0)=0$ and $g_{\lambda}(\pi)<0$. Also, $g_{\lambda}(x) \to -\infty$ as $x \to \infty$ and $g_{\lambda}(x) \to \infty$ as $x \to -\infty$.  Thus by (i) and (ii), $g_{\lambda}(x)$ has three zeros 
for $\lambda < 1$.
Let $x_{\lambda}>0$ be such that $g_{\lambda}(x_{\lambda})=0$, it gives $\lambda= \Psi(x_{\lambda})$.\\
(a) If $0<\lambda<\lambda^*$, then $\Psi(x_{\lambda}) <\Psi(x^*)$. Since $\Psi(x)$ is strictly decreasing on $(0, x_{0})$, we have $x^*<x_{\lambda}$. Thus by (\ref{e3}), $g'_{\lambda}(x_{\lambda})= \phi(x_{\lambda})-1<-2$. Consequently, $f'_{\lambda}(x_{\lambda})<-1$. Also, $g'_{\lambda}(0)>0$. Therefore, $-x_{\lambda}$, $0$ and $x_{\lambda}$  are repulsive fixed points of $f_{\lambda}(x)$.\\
(b) If $\lambda=\lambda^*$, then $\Psi(x_{\lambda}) =\Psi(x^*)$.  The injectivity of $\Psi(x)$ on $(0, x_{0})$ implies that $x_{\lambda}=x^*$. Thus  by (\ref{e3}), it follows that $g'_{\lambda^*}(x^*)= \phi(x^*)-1 =-2$. Consequently, $f'_{\lambda^*}(x^*) = -1$. Also, $g'_{\lambda}(0)>0$.
Therefore, $-x^*$ and $x^*$ both are rationally indifferent fixed points, and $0$ is a repulsive fixed point  of $f_{\lambda}(x)$ (see in Figure~\ref{rr}(a)).\\
(c) If $\lambda^* <\lambda<1$, then $ \Psi(x^*)< \Psi(x_{\lambda})$. Since $\Psi(x)$ is strictly decreasing on $(0, x_{0})$, we have $x^* >x_{\lambda}>0$. Thus by (\ref{e3}), $-2< g'_{\lambda}(x_{\lambda}) = \phi(x_{\lambda}) -1<0$. Consequently, $-1< f'_{\lambda}(x_{\lambda})<1$. Also, $g'_{\lambda}(0)>0$. Therefore, $-x_{\lambda}$ and $x_{\lambda}$ both are attractive fixed points of $f_{\lambda}(x)$, and $0$ is a repulsive fixed point.\\
(d) If $\lambda=1$, then $g'_{\lambda}(0)=0$. Thus $f'_{\lambda}(0)=1$. Therefore, by (i) and (iii), $0$ is the rationally indifferent fixed point of $f_{\lambda}(x)$ (see in Figure~\ref{rr}(b)).\\
(e) If $\lambda>1$, then $g'_{\lambda}(0)<0$. Thus $f'_{\lambda}(0)<1$. Therefore, by (i) and (iii), $0$ is the attractive fixed point of $f_{\lambda}(x)$ (see in Figure~\ref{rr}(c)).
\end{proof}
\begin{figure}[h!]
	\centering
	\subfloat[]{\includegraphics[width=0.45\textwidth]{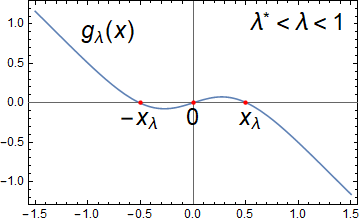}} \hspace{1cm}
	\subfloat[]{\includegraphics[width=0.45\textwidth]{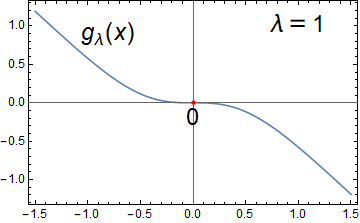}} \\
	\subfloat[]{\includegraphics[width=0.45\textwidth]{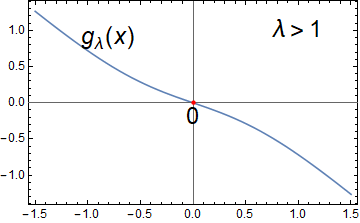}}
	\caption{The graphs of $g_{\lambda}(x)$ for \text{(a)} $\lambda^*< \lambda < 1, \text{(b)} \; \lambda= 1$ and $\text{(c)} \;\lambda > 1$.}
\label{rr}
\end{figure}
\par  Let $f$ be a continuous function on an interval $I$ of the real line to itself. The function $f$ is called $turbulent$ if there exist compact subintervals $J$ and $K$ of $I$ such that\\
(1) the intersection $J \cap K$ contains at most one point, and\\
(2) the union $J \cup K \subseteq f(J) \cap f(K)$.\\
If some iterate of $f$ is turbulent, then $f$ on $I$ is said to be  $chaotic$ \cite{block2006dynamics}. \\
Consider $\psi(x)=2x\tan{x}-x^2$ for $x \in (0, \frac{\pi}{2})$. Now, $\psi'(x)= 2(\tan{x})(1+\tan{x})>0$ and hence, $\psi(x)$ is strictly increasing on  $(0, \frac{\pi}{2})$.
\par In the following proposition, it is proved that $f_{\lambda}(x)$ is chaotic on some interval for a certain range of  $\lambda$  values. Let $p_{\lambda^{**}}$ be the positive solution of the equation $\displaystyle \frac{\cos x}{2x} = \pi$, and   $ \lambda^{**} = \frac{\sin p_{\lambda^{**}}}{\pi} - \Big(\frac{\cos p_{\lambda^{**}}}{2\pi} \Big)^2$.
\begin{prop} \label{t7}
Let $\displaystyle f_{\lambda}(x) = \frac{\sin{x}}{x^2 + \lambda}$ for $\lambda >0$. There exists $\lambda^{**} (\approx 0.0251)$ such that  if $0 < \lambda \leq \lambda^{**}$, then $f_{\lambda}(x)$ is chaotic on $[-f_{\lambda}(p_{\lambda}), f_{\lambda}(p_{\lambda})]$, where $p_{\lambda}= p_{\lambda,0}$ is a critical point of $f_{\lambda}(x)$.  
\end{prop}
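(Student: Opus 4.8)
The plan is to show that the interval $I = [-f_{\lambda}(p_{\lambda}), f_{\lambda}(p_{\lambda})]$ is forward invariant under $f_{\lambda}$ and that $f_{\lambda}$ restricted to $I$ is already turbulent (the first iterate does the job), which by the definition above makes $f_{\lambda}$ chaotic on $I$.

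First I would pin down $p_{\lambda} = p_{\lambda,0}$ and compute $f_{\lambda}(p_{\lambda})$. From $f_{\lambda}'(p_{\lambda}) = 0$ one gets $(p_{\lambda}^{2}+\lambda)\cos p_{\lambda} = 2p_{\lambda}\sin p_{\lambda}$; since $p_{\lambda} \in (0,\pi)$ and the right-hand side is positive, $\cot p_{\lambda} > 0$ forces $p_{\lambda} \in (0,\tfrac{\pi}{2})$, so in particular $\cos p_{\lambda}>0$. Dividing through gives $\lambda = 2p_{\lambda}\tan p_{\lambda} - p_{\lambda}^{2} = \psi(p_{\lambda})$, and substituting the relation back into $f_{\lambda}$ yields $f_{\lambda}(p_{\lambda}) = \frac{\cos p_{\lambda}}{2p_{\lambda}} =: M_{\lambda}$. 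Since $\psi$ is strictly increasing on $(0,\tfrac{\pi}{2})$ (noted just before the statement) with range $(0,\infty)$, the map $\lambda \mapsto p_{\lambda}$ is strictly increasing; since $t \mapsto \frac{\cos t}{2t}$ is strictly decreasing on $(0,\tfrac{\pi}{2})$, the map $\lambda \mapsto M_{\lambda}$ is strictly decreasing. Because $p_{\lambda^{**}}$ solves $\frac{\cos x}{2x} = \pi$ and a one-line substitution ($p_{\lambda^{**}} = \frac{\cos p_{\lambda^{**}}}{2\pi}$) shows $\psi(p_{\lambda^{**}}) = \lambda^{**}$, we obtain the key equivalence: $0 < \lambda \le \lambda^{**}$ if and only if $M_{\lambda} \ge \pi$.

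Next, assuming $\lambda \le \lambda^{**}$, so $M_{\lambda} \ge \pi$, I would check $f_{\lambda}(I) \subseteq I$: on $[0,\pi]$ we have $0 \le f_{\lambda}(x) \le M_{\lambda}$, while for $\pi \le |x| \le M_{\lambda}$ we have $|f_{\lambda}(x)| < 1 < \pi \le M_{\lambda}$ (the bound $|f_{\lambda}(x)|<1$ for $|x|\ge\pi$ was already used in the proof of Proposition~\ref{t1}); together with the oddness of $f_{\lambda}$ this gives invariance. Then, using that $f_{\lambda}$ is strictly increasing on $[0,p_{\lambda}]$ and strictly decreasing on $[p_{\lambda},\pi]$ (equation~(\ref{g1})) with $f_{\lambda}(0)=f_{\lambda}(\pi)=0$, set $J = [0,p_{\lambda}]$ and $K = [p_{\lambda},\pi]$. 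Both are compact subintervals of $I$ since $p_{\lambda} < \tfrac{\pi}{2} < \pi \le M_{\lambda}$, their intersection is the single point $p_{\lambda}$, and $f_{\lambda}(J) = f_{\lambda}(K) = [0,M_{\lambda}] \supseteq [0,\pi] = J \cup K$. Hence $J \cup K \subseteq f_{\lambda}(J) \cap f_{\lambda}(K)$, so $f_{\lambda}$ is turbulent on $I$ and therefore chaotic on $I$.

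Most of this is bookkeeping; the step demanding the most care is the calibration of the threshold --- simultaneously checking that $p_{\lambda} \in (0,\tfrac{\pi}{2})$ (so that $M_{\lambda} = \cos p_{\lambda}/(2p_{\lambda})$ is a genuine positive number), that $\lambda \mapsto M_{\lambda}$ is monotone, and that $\lambda^{**}$, defined via the auxiliary point $p_{\lambda^{**}}$, is exactly the parameter value at which $M_{\lambda} = \pi$. Once $M_{\lambda} \ge \pi$ is available, the turbulence of $f_{\lambda}$ on $I$ with the obvious pair $J,K$, and hence the chaoticity, are essentially immediate from the shape of the graph of $f_{\lambda}$ on $[0,\pi]$.
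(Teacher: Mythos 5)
Your proposal is correct and follows essentially the same route as the paper: calibrate $\lambda^{**}$ so that the maximum value $f_{\lambda}(p_{\lambda})=\cos p_{\lambda}/(2p_{\lambda})$ reaches $\pi$, then take $J=[0,p_{\lambda}]$, $K=[p_{\lambda},\pi]$ and verify $J\cup K\subseteq f_{\lambda}(J)\cap f_{\lambda}(K)$ to get turbulence. Your version is in fact slightly more complete, since you also check that $f_{\lambda}$ maps $I=[-f_{\lambda}(p_{\lambda}),f_{\lambda}(p_{\lambda})]$ into itself (required by the definition of turbulence on an interval) and that $\psi(p_{\lambda^{**}})=\lambda^{**}$, two points the paper leaves implicit.
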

\begin{proof}
If $0< \lambda_1< \lambda_2<1$, then $\psi(p_{\lambda_1})< \psi(p_{\lambda_2})$. Since $\psi(x)$ is strictly increasing, $0<p_{\lambda_1}< p_{\lambda_2}<1$. Note that $\frac{\cos{x}}{2x}$ is strictly decreasing on $\left(0,\frac{\pi}{2}\right)$. So,  $\frac{\cos{p_{\lambda_1}}}{2p_{\lambda_1}} > \frac{\cos{p_{\lambda_2}}}{2p_{\lambda_2}} > \frac{\cos{1}}{2} > \frac{1}{2\pi}$. Therefore, $\left| \frac{\cos{p_{\lambda}}}{2p_{\lambda}} \right| > \left| \frac{\cos{p_{\lambda,j}}}{2p_{\lambda,j}} \right|$ for $j \in \mathbb{Z}\setminus \{-1,0\}$  and $\lambda \in (0,1)$, since $p_{\lambda,j} \in (j\pi, (j+1)\pi)$. It implies $-\frac{\cos{p_{\lambda}}}{2p_{\lambda}} $ and  $\frac{\cos{p_{\lambda}}}{2p_{\lambda}} $ are minimum and maximum values of $f_{\lambda}(x)$, respectively. Observe that $\frac{\cos{p_{\lambda}}}{2p_{\lambda}} $ increases as $\lambda$ decreases such that $\frac{\cos{p_{\lambda}}}{2p_{\lambda}} \to \infty$ as $\lambda \to 0$. Notice that $\frac{\cos{p_{\lambda}}}{2p_{\lambda}} <1$ at $\lambda=1$. Therefore, there exists a unique $p_{\lambda^{**}}$ such that $ \frac{\cos p_{\lambda^{**}}}{2p_{\lambda^{**}}} = \pi$, and $\displaystyle \lambda^{**} = \frac{\sin p_{\lambda^{**}}}{\pi} - \Big(\frac{\cos p_{\lambda^{**}}}{2\pi}\Big)^2$. Further, if $\lambda \leq \lambda^{**}$, then $f_{\lambda}(p_{\lambda}) \geq \pi$. Take $I= [-f_{\lambda}(p_{\lambda}), f_{\lambda}(p_{\lambda})]$, $J= [0, p_{\lambda}]$ and $K=[p_{\lambda}, \pi]$. Then $J \cup K \subset f_{\lambda}(J)  \cap f_{\lambda}(K)$ for $\lambda \leq \lambda^{**}$. Hence, by definition, the result holds. 	
\end{proof}
\begin{rem} \label{r2}
Using Proposition \ref{t7}, $f_{\lambda}(x)$ is unimodal on $[0,\pi]$ for $\lambda \in [\lambda^{**}, \infty)$, that means, $f_{\lambda}(x)$ maps $[0,\pi]$ to itself and it has a unique critical point $p_{\lambda} \in (0,\pi)$.
\end{rem}
The following lemma describes the relation between $\lambda^*$ and $\lambda^{**}$, where $\lambda^*$ and $\lambda^{**}$ are defined in Proposition \ref{t1} and Proposition \ref{t7}, respectively. 
\begin{lem}\label{l5}
Let $\displaystyle f_{\lambda}(x) = \frac{\sin{x}}{x^2 + \lambda}$ for $\lambda >0$. Then $\lambda^{**}\leq \lambda^*$. 
\end{lem}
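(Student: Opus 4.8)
My plan is to turn the comparison of the two constants into a single numerical inequality by monotonicity, using facts already in the proof of Proposition~\ref{t7}. Recall there that the critical point $p_\lambda := p_{\lambda,0}$ lies in $(0,\pi/2)$ and satisfies $\psi(p_\lambda)=\lambda$ and $f_\lambda(p_\lambda)=\frac{\cos p_\lambda}{2p_\lambda}$, that $\psi$ is strictly increasing on $(0,\pi/2)$, and that $\lambda\mapsto f_\lambda(p_\lambda)$ is strictly decreasing on $(0,1)$. Both $\lambda^*$ and $\lambda^{**}$ lie in $(0,1)$, and $f_{\lambda^{**}}(p_{\lambda^{**}})=\pi$ by the definition of $\lambda^{**}$; hence $\lambda^{**}\le\lambda^*$ is equivalent to $f_{\lambda^*}(p_{\lambda^*})\le\pi$. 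Since $0<\cos p_{\lambda^*}\le 1$, it is enough to prove $p_{\lambda^*}\ge\frac{1}{2\pi}$, which by monotonicity of $\psi$ is equivalent to
\[
\lambda^* \;=\; \psi(p_{\lambda^*}) \;\ge\; \psi\!\left(\tfrac{1}{2\pi}\right) \;=\; \tfrac{1}{\pi}\tan\tfrac{1}{2\pi} - \tfrac{1}{4\pi^{2}}.
\]
For the right-hand side, convexity of $\tan$ on $[0,\pi/4]$ (it lies below its chord) gives $\tan t\le\frac{4}{\pi}t$ there, so $\tan\frac{1}{2\pi}\le\frac{2}{\pi^{2}}$ and $\psi(1/(2\pi))\le\frac{2}{\pi^{3}}-\frac{1}{4\pi^{2}}<0.04$. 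Thus it only remains to show $\lambda^*>0.04$.

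To bound $\lambda^*$ from below I would locate $x^*$. Using the identity $\Psi(x)+x^2=\frac{\sin x}{x}$, the function in (\ref{e3}) simplifies to $\phi(x)=\frac{x\cos x-2x^3}{\sin x}$, so the defining relation $\phi(x^*)=-1$ is precisely $F(x^*)=0$ with $F(x):=2x^3-x\cos x-\sin x$. Now $F''(x)=12x+3\sin x+x\cos x>0$ on $(0,\pi/2)$ and $F'(0)=-2<0<F'(\pi/2)$, so $F$ is first strictly decreasing and then strictly increasing on $(0,\pi/2)$; combined with $F(0)=0$ this forces $F<0$ on $(0,x^*)$, where $x^*\in(0,x_0)\subset(0,\pi/2)$ is the zero of $F$ in $(0,\pi/2)$. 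Using $\cos t\ge 1-\frac{t^2}{2}$ and $\sin t\ge t-\frac{t^3}{6}$ one gets $F(0.85)\le\frac{8}{3}(0.85)^3-1.7<0$, hence $x^*>0.85$. Since $x\mapsto x^2-\cos x$ is increasing on $(0,\pi/2)$, $\lambda^*=(x^*)^2-\cos x^*$, and $\cos t\le 1-\frac{t^2}{2}+\frac{t^4}{24}$ for $t\ge 0$, we obtain $\lambda^*>(0.85)^2-\cos 0.85>0.062>0.04$.

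Putting the pieces together, $\lambda^*>0.04>\psi(1/(2\pi))$ gives $p_{\lambda^*}>\frac{1}{2\pi}$, hence $f_{\lambda^*}(p_{\lambda^*})=\frac{\cos p_{\lambda^*}}{2p_{\lambda^*}}<\pi=f_{\lambda^{**}}(p_{\lambda^{**}})$, and strict monotonicity of $\lambda\mapsto f_\lambda(p_\lambda)$ on $(0,1)$ yields $\lambda^{**}<\lambda^*$, in particular $\lambda^{**}\le\lambda^*$. The monotonicity reduction in the first paragraph is routine given the excerpt; the real work is the explicit separation of the two constants, and the only slightly delicate point there is the second-derivative sign argument for $F$ that pins $x^*$ strictly to the right of $0.85$ (and thereby bounds $\lambda^*$ away from $0$).
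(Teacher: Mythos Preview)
Your proof is correct, but it takes a completely different route from the paper's. The paper argues dynamically: by Proposition~\ref{t7} the map $f_{\lambda^{**}}$ is turbulent on $[0,\pi]$, hence (via Theorem~4.1 of \cite{Bernd}) topologically transitive there, which rules out $x_{\lambda^{**}}$ being an attracting fixed point; Proposition~\ref{t1} then forces $\lambda^{**}\le\lambda^*$. Your argument instead separates the two constants numerically: you reduce the question to $f_{\lambda^*}(p_{\lambda^*})\le\pi$ via monotonicity, then bound $\lambda^*$ from below by locating $x^*$ through the auxiliary function $F(x)=2x^3-x\cos x-\sin x$ and elementary Taylor estimates. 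The paper's proof is shorter and more conceptual but leans on an external transitivity result; your proof is entirely self-contained calculus, yields the strict inequality $\lambda^{**}<\lambda^*$ directly, and incidentally produces the explicit bound $\lambda^*>0.062$. Both are valid; the trade-off is elegance versus elementarity.
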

\begin{proof}
By Lemma \ref{t7}, $f_{\lambda^{**}}(x)$ is chaotic on the invariant set $[0,\pi]$. Thus, by Theorem 4.1 of \cite{Bernd}, $f_{\lambda^{**}}(x)$ is topologically transitive on $[0,\pi]$. Therefore, $x_{\lambda^{**}}$ is not an attractive fixed point of $f_{\lambda^{**}}(x)$ and hence,  $\lambda^{**} \leq \lambda^*$ by Proposition \ref{t1}.
 \end{proof}
\par Let $S_{\lambda,1} := \{ x \in \mathbb{R} : f_{\lambda}(x) >0 \}$ and  $S_{\lambda,2} := \{ x \in \mathbb{R} : f_{\lambda}(x) <0 \}$. In the following proposition, it is shown that the bifurcation occurs at $\lambda=1$ for $f_{\lambda}(x)$.
\begin{prop} \label{t2}
Let $ f_{\lambda}(x) = \frac {\sin{x}}{x^2 + \lambda}$ for $\lambda >0$.
\begin{enumerate}[label=(\alph*)]
\item If $\lambda > 1$, then  $f^n_{\lambda}(x) \to 0$ as $n \to \infty$ for $x \in \mathbb{R}$.
\item If $\lambda = 1$, then $f^n_{\lambda}(x) \to 0$ as $n \to \infty$ for $x \in \mathbb{R}$.
\item If $\lambda^* <\lambda <1$, then  $f^n_{\lambda}(x) \to x_{\lambda}$ as $n \to \infty$ for $x \in S_{\lambda,1}$ and $f^n_{\lambda}(x) \to -x_{\lambda}$ as $n \to \infty$ for $x \in S_{\lambda,2}$, where $-x_{\lambda}$ and $x_{\lambda}$ are attractive fixed points of $f_{\lambda}$. 
\end{enumerate} 
\end{prop}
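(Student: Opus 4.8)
The plan is to handle parts (a)--(b) by a soft monotonicity argument and part (c) by reducing to a unimodal interval map. For (a) and (b), i.e. $\lambda\ge 1$, the key estimate is $|f_\lambda(x)|\le\frac{|\sin x|}{\lambda}<\frac{|x|}{\lambda}\le|x|$ for every $x\neq 0$, the middle inequality being strict because $|\sin x|<|x|$ for $x\neq 0$ and the last because $\lambda\ge 1$. Hence $a_n:=|f_\lambda^{\,n}(x)|$ is non-increasing, and after one step it is bounded by $1/\lambda$, so $a_n\downarrow L\ge 0$. If $L>0$, pick a subsequence $f_\lambda^{\,n_k}(x)\to w$; then $|w|=L>0$, while continuity gives $|f_\lambda(w)|=\lim a_{n_k+1}=L=|w|$, contradicting $|f_\lambda(w)|<|w|$. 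Thus $L=0$ and $f_\lambda^{\,n}(x)\to 0$ for every $x\in\mathbb R$; this covers (a) and (b) simultaneously.

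For (c), $\lambda\in(\lambda^*,1)$: since $f_\lambda$ is odd it suffices to treat $x\in S_{\lambda,1}$, and for such $x$ one has $0<f_\lambda(x)\le\max_{\mathbb R}f_\lambda=\frac{\cos p_\lambda}{2p_\lambda}<\pi$, the last inequality coming from Proposition~\ref{t7} since $\lambda>\lambda^*\ge\lambda^{**}$ by Lemma~\ref{l5}. So after one step the orbit lies in $(0,\pi)$, which is forward invariant ($f_\lambda((0,\pi))\subseteq(0,\max_{\mathbb R}f_\lambda]\subseteq(0,\pi)$), and it is enough to prove $f_\lambda^{\,n}(y)\to x_\lambda$ for every $y\in(0,\pi)$; the case $x\in S_{\lambda,2}$ then follows from $f_\lambda^{\,n}(x)=-f_\lambda^{\,n}(-x)$. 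On $[0,\pi]$, $f_\lambda$ is unimodal (Remark~\ref{r2}) with $f_\lambda(0)=f_\lambda(\pi)=0$; by Proposition~\ref{t1}(c) together with the fact (from the proof of Proposition~\ref{t1}) that $g_\lambda=f_\lambda-\mathrm{id}$ has exactly the three real zeros $0,\pm x_\lambda$, the only fixed points of $f_\lambda$ in $[0,\pi]$ are $0$ and $x_\lambda$, with $f_\lambda(t)>t$ on $(0,x_\lambda)$ and $f_\lambda(t)<t$ on $(x_\lambda,\pi)$. Granting for the moment that $f_\lambda$ has no periodic orbit of period $2$ in $[0,\pi]$, Sharkovskii's theorem shows it has no periodic orbit of any period $>1$, so $\{0,x_\lambda\}$ is the entire periodic set of $f_\lambda|_{[0,\pi]}$; then by the standard fact that a continuous interval map with no period-$2$ point has every $\omega$-limit set equal to a single fixed point (see \cite{block2006dynamics}), $\omega(y)\in\{\{0\},\{x_\lambda\}\}$ for $y\in(0,\pi)$. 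The possibility $\omega(y)=\{0\}$ is excluded because $(0,\pi)$ is forward invariant (so the orbit never reaches $0$) while $f_\lambda(t)>t$ on $(0,x_\lambda)$ forces any orbit entering a small right-neighbourhood of $0$ to increase and leave it. Hence $\omega(y)=\{x_\lambda\}$, i.e. $f_\lambda^{\,n}(y)\to x_\lambda$, which is the claim.

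The crux, and the step I expect to be the main obstacle, is the absence of a period-$2$ orbit on $[0,\pi]$ when $\lambda>\lambda^*$. My plan is to check that $f_\lambda$ has negative Schwarzian derivative on $\{f_\lambda'\neq 0\}$; Singer's theorem then identifies $\{x_\lambda\}$ as the unique non-repelling cycle (its immediate basin contains the critical point $p_\lambda$), so any period-$2$ orbit would have to be repelling, and a Schwarzian minimum-principle argument applied to $f_\lambda^2$ — together with the fact, read off from Proposition~\ref{t1}, that $f_\lambda'(x_\lambda)>-1$ precisely for $\lambda>\lambda^*$ ($\lambda^*$ being exactly the period-doubling value) — rules such an orbit out. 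Should the global Schwarzian computation prove awkward, a fallback is a direct interval analysis splitting on whether $\frac{\cos p_\lambda}{2p_\lambda}\le p_\lambda$ (then $[0,p_\lambda]$ is forward invariant with $f_\lambda$ monotone on it, and every orbit in $(0,\pi)$ enters it after one step and converges monotonically to $x_\lambda$) or $\frac{\cos p_\lambda}{2p_\lambda}>p_\lambda$ (the genuinely delicate case, occurring near $\lambda^*$, which would be handled by tracking the forward orbit of the critical value).
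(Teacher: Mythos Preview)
Your treatment of (a)--(b) and the overall framework for (c)---reduce to the unimodal map on $(0,\pi)$ via Remark~\ref{r2} and Lemma~\ref{l5}, rule out period~$2$, invoke Sharkovskii and the fact that absence of period~$2$ forces every $\omega$-limit set to be a fixed point, then exclude $\omega=\{0\}$---match the paper's strategy. Your case split on $M_\lambda\lessgtr p_\lambda$ is equivalent to the paper's split on $x_\lambda\lessgtr p_\lambda$ (since $f_\lambda(t)-t$ changes sign on $(0,\pi)$ only at $x_\lambda$, one has $M_\lambda=f_\lambda(p_\lambda)\lessgtr p_\lambda$ exactly when $x_\lambda\lessgtr p_\lambda$), and your monotone-convergence argument in the easy sub-case is the paper's Case~I.

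Where you diverge is precisely at the crux you flagged: ruling out a $2$-cycle when $x_\lambda>p_\lambda$. Your primary plan is to verify $Sf_\lambda<0$ and invoke Singer, but the Schwarzian of $\sin x/(x^2+\lambda)$ is not pleasant and its sign is not obvious a priori; moreover, Singer's theorem by itself yields only that any $2$-cycle would be \emph{repelling}, so the ``minimum-principle argument'' you allude to still needs to be spelled out to finish. The paper sidesteps this entirely with a continuation argument: if some $\tilde\lambda\in(\lambda^*,\hat\lambda)$ carried a $2$-cycle while no larger $\lambda$ does (Case~I already covers $\lambda\ge\hat\lambda$), then at $\tilde\lambda$ the $2$-cycle must be neutral; from the ordering $p_{\tilde\lambda}\le x_{\tilde\lambda,2}<x_{\tilde\lambda}<f_{\tilde\lambda}(x_{\tilde\lambda,2})\le p_{\tilde\lambda,2}$ of the fixed point $x_{\tilde\lambda}$ of $f_{\tilde\lambda}^2$ between two adjacent critical points of $f_{\tilde\lambda}^2$, Lemma~2.6 of \cite{Singer} forces $(f_{\tilde\lambda}^2)'(x_{\tilde\lambda})>1$, contradicting that $x_{\tilde\lambda}$ is attracting for $\tilde\lambda>\lambda^*$. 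This uses only a local Singer-type convexity lemma, not a global Schwarzian computation. Your plan is not wrong in outline, but the step you identified as the main obstacle is genuinely the obstacle, and the paper's route around it is different from (and more concrete than) either of your proposed approaches.
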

\begin{proof}
If $\displaystyle x \in \mathbb{R}\setminus \{0\}$, then  $|f_{\lambda}(x)| < |x|$ for $\lambda \geq 1$.\\
(a) The sequence $\{|f_{\lambda}^{n}(x)|\}_{n>0}$ is strictly decreasing and bounded below by $0$ and so, by Monotonic convergence theorem, it converges to $0$. Hence, the result holds.\\ 
(b) On parallel arguments to  Part (a) of Proposition \ref{t2}, the result holds.\\
(c) By Remark \ref{r2} and Lemma \ref{l5},  $f_{\lambda}(x)$ is  unimodal on $[0,\pi]$ for $\lambda > \lambda^*$.
Also, $\displaystyle \frac{\partial f_{\lambda}}{\partial \lambda} (x)= \frac{-\sin x}{(x^2 + \lambda)^2} <0$ when $x \in (0, \pi)$.  Thus by Proposition \ref{t1},   
there exists a unique $\hat{\lambda} \in (\lambda^*,1)$ such that the non-zero fixed point and critical point of $f_{\lambda}(x)$ has the following property: 
\begin{equation*}  
  	\begin{cases}
  		x_{\lambda}>p_{\lambda} &  \text{for} \; \lambda \in (\lambda^*,\hat{\lambda}),   \\
  		x_{\lambda}=p_{\lambda} &  \text{at} \; \lambda= \hat{\lambda}, \\
  		x_{\lambda}<p_{\lambda} &   \text{for} \; \lambda \in (\hat{\lambda},1),
  	\end{cases}
\end{equation*}
where $p_{\lambda}=p_{\lambda,0}$.\\
\underline{Case I:} For $\lambda \in [\hat{\lambda},1)$.\\
 We show that if $x \in (0, \pi)$, then $f^n_{\lambda}(x) \rightarrow x_{\lambda}$ as $ n \rightarrow \infty$.\\
Observe that the local maximum value $M_{\lambda}$ of $f_{\lambda}(x)$ in $[0,\pi]$ that is attained at the critical point $p_{\lambda}$ satisfies $M_{\lambda} < p_{\lambda}$. Therefore, $f_{\lambda}(x)$ maps $[p_{\lambda}, \pi]$ into $[0, M_{\lambda}] \subset [0, p_{\lambda}]$ and  
\begin{equation}
f_{\lambda}(x) -x 
\begin{cases}
	>0 & \text{for} \; x \in (0,x_{\lambda}),   \\
	<0 &  \text{for} \; x \in (x_{\lambda},\pi). 
\end{cases}
\label{eg1}
\end{equation}
Therefore, by  (\ref{eg1}),
\begin{equation}
0< f_{\lambda}(x)- x_{\lambda} < x- x_{\lambda}
\label{eg2}
\end{equation}
for $x_{\lambda} < x < p_{\lambda}$. Since $f'_{\lambda}(x) > f'_{\lambda}(x_{\lambda}) >0$, $f_{\lambda}(x)>x$   for $x \in (0,x_{\lambda})$. So, $0< x_{\lambda} -f_{\lambda}(x) < x_{\lambda} -x$.
Moreover, by  (\ref{eg2}),  $|f_{\lambda}(x) - x_{\lambda}| < |x-x_{\lambda}|$ for $x \in (x_{\lambda},p_{\lambda}]$. Consequently, for $x \in (0,p_{\lambda}]$ and $x \neq x_{\lambda}$, we have $|f_{\lambda}(x) - x_{\lambda}| < |x-x_{\lambda}|$. 
Thus  $f^{n}_{\lambda}(x) \rightarrow x_{\lambda}$ as $n \rightarrow \infty$ for $x \in (0,\pi)$. Note that $f_{\lambda}(x)$ has no 2-periodic point in the interval $(0,\pi)$  for $\lambda \in [\hat{\lambda},\infty)$.\\
\begin{figure}[h!]
	\centering
	\subfloat[]{\includegraphics[width=0.45\textwidth]{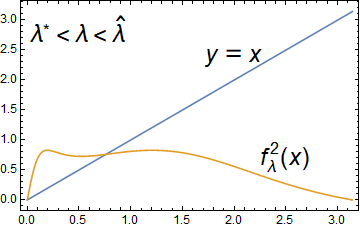}} \hspace{1cm}
	\subfloat[]{\includegraphics[width=0.45\textwidth]{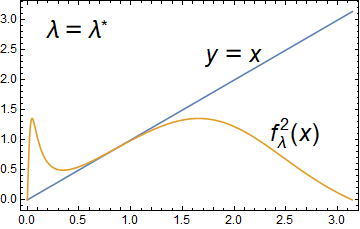}} \\
	\subfloat[]{\includegraphics[width=0.45\textwidth]{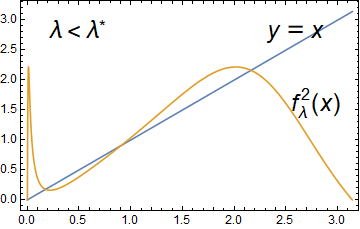}}
	\caption{The graphs of $f^2_{\lambda}(x)$ for \text{(a)} $\lambda^*< \lambda < \hat{\lambda}, \text{(b)} \; \lambda= \lambda^*$ and $\text{(c)} \;\lambda < \lambda^*$.}
	\label{fof}
\end{figure}
\underline{Case II:} For $\lambda \in (\lambda^*, \hat{\lambda})$.\\
\par \noindent
Claim: $f_{\lambda}$ has no 2-periodic point.\\
\par  Suppose not, there exists $\tilde{\lambda} \in (\lambda^*, \hat{\lambda})$ such that $f_{\tilde{\lambda}}(x)$ has a $2$-periodic point in $(0, \pi)$, say $x_{\tilde{\lambda},2} (< f_{\tilde{\lambda}}(x_{\tilde{\lambda},2})$). But $f_{\lambda}(x)$ has no $2$-periodic point for $\lambda > \tilde{\lambda}$. Therefore, by  Proposition \ref{t1}, $f^2_{\lambda}(x)$ has the only attractive  fixed point $x_{\lambda}$  satisfying $0< (f^2_{\lambda})'(x_{\lambda})<1$ for $\lambda > \tilde{\lambda}$. It implies that $f^2_{\lambda}(x)> x$ when $x \in (0,x_{\lambda})$ and $f^2_{\lambda}(x)< x$ when $x \in (x_{\lambda}, \pi)$ for $\lambda > \tilde{\lambda}$.
Therefore, $(f^2_{\tilde{\lambda}})'(x_{\tilde{\lambda},2})=1$ and it implies that $x_{\tilde{\lambda},2}$ is a rationally indifferent fixed point of $f^2_{\tilde{\lambda}}(x)$. Observe that $f_{\lambda}^2(x)$ has three critical points in $(0, \pi)$ for $\lambda \in (\lambda^*, \tilde{\lambda})$. Assume $0< p_{\lambda,1} < p_{\lambda} < p_{\lambda,2}$  as critical points of $f^2_{\lambda}(x)$. Notice that $f^2_{\lambda}(p_{\lambda,1}) (= f^2_{\lambda}(p_{\lambda,2})) > f^2_{\lambda}(p_{\lambda})$ since $f_{\lambda}(x)$ is unimodal on $[0,\pi]$ and $p_{\lambda}< x_{\lambda}$. Therefore, we have the following order $$0<p_{\tilde{\lambda}} \leq x_{\tilde{\lambda},2}< x_{\tilde{\lambda}}<f_{\tilde{\lambda}}(x_{\tilde{\lambda},2})\leq  p_{\tilde{\lambda},2}$$ (see in Figure \ref{fof}). Consequently, by Lemma 2.6 of \cite{Singer}, $(f^2_{\tilde{\lambda}})'(x_{\tilde{\lambda}})>1$. Hence, it contradicts $x_{\tilde{\lambda}}$ is an attractive fixed point of $f_{\tilde{\lambda}}(x)$ and it implies that our claim holds.   
\par By Sarkovskii's order theorem, $f_{\lambda}(x)$ has no periodic point of period greater than or equal to 2 inside $(0,\pi)$.
Also, notice that $f_{\lambda}([0,\pi]) \subset [0,\pi]$. Therefore, by Proposition \ref{t1}, $f^n_{\lambda}(x) \rightarrow x_{\lambda}$ when $x \in (0, \pi)$ and since $f_{\lambda}(x)$  is odd, so, $f^n_{\lambda}(x) \rightarrow -x_{\lambda}$  when $x \in (-\pi,0)$ as $n \rightarrow \infty$. 
\par In Proposition \ref{t7}, we showed that $f_{\lambda}(x)$  attains its maximum value at $ p_{\lambda} \in (0,\pi)$ and its minimum value at $\displaystyle -p_{\lambda}$. 
Therefore, $f_{\lambda}(S_{\lambda,1}) \subset (0,\pi)$ and $f_{\lambda}(S_{\lambda,1}) \subset (-\pi,0)$. Hence, by Case I and Case II, the result holds. 
\end{proof}
\begin{rem}
Using Proposition \ref{t1} and Proposition \ref{t2}, a pitchfork supercritical bifurcation occurs at $\lambda=1$ for $f_{\lambda}(x)$.
\end{rem}
\par In the following proposition, it is shown that a period-doubling bifurcation occurs at $\lambda =\lambda^*$ for $f_{\lambda}(x)$. 
\begin{prop} \label{t6}
Let $\displaystyle f_{\lambda}(x) = \frac{\sin{x}}{x^2 + \lambda}$ for $\lambda >0$. The period-doubling bifurcation occurs at $ \lambda^*$ for $f_{\lambda^*}(x)$.
\end{prop}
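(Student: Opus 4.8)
The plan is to verify the hypotheses of the period-doubling (flip) bifurcation theorem (see, e.g., \cite{Singer}) for the real-analytic one-parameter family $F(x,\lambda):=f_\lambda(x)$ at the point $(x^*,\lambda^*)$; note that $F$ is real-analytic near $(x^*,\lambda^*)$ since $x^*>0,\ \lambda^*>0$ keep $x^2+\lambda$ away from $0$. The branch of non-zero fixed points through this point is $\lambda\mapsto x_\lambda=\Psi^{-1}(\lambda)$: indeed $x_\lambda$ is defined by $\lambda=\Psi(x_\lambda)$, and since $\Psi$ is $C^\infty$ and strictly decreasing on $(0,\infty)$ with $\Psi'<0$, its inverse is $C^\infty$ and strictly decreasing, with $x_{\lambda^*}=x^*$. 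By Proposition~\ref{t1}(b) the multiplier at the bifurcation point is $f_{\lambda^*}'(x^*)=-1$, which is the first hypothesis.

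Next I would establish transversality, i.e.\ that $\lambda\mapsto f_\lambda'(x_\lambda)$ crosses $-1$ with non-zero speed at $\lambda^*$. Along the branch, $f_\lambda'(x_\lambda)=\phi(x_\lambda)$: this is the identity $g_\lambda'(x_\lambda)=\phi(x_\lambda)-1$ from the proof of Proposition~\ref{t1}, rewritten using $x^2+\lambda=\Psi(x)+x^2$ at $x=x_\lambda$. Since $\phi$ is $C^\infty$ and strictly decreasing on $(0,x_0)$ and $\lambda\mapsto x_\lambda$ is $C^\infty$ and strictly decreasing, the composite $\lambda\mapsto\phi(x_\lambda)$ is $C^\infty$ and strictly increasing, so its derivative at $\lambda^*$ is strictly positive; this gives transversality (and recovers locally the multiplier signs of Proposition~\ref{t1}(a),(c)).

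The crux is the non-degeneracy condition. Put $g=f_{\lambda^*}^2$. From $f_{\lambda^*}'(x^*)=-1$ one gets $g'(x^*)=1$ and $g''(x^*)=0$, so the local normal form of $g$ at $x^*$ is $u\mapsto u+\frac{1}{6}g'''(x^*)u^3+O(u^4)$ with $g'''(x^*)=2\,Sf_{\lambda^*}(x^*)$, where $S$ denotes the Schwarzian derivative; the non-degeneracy condition is therefore $Sf_{\lambda^*}(x^*)\neq 0$. I expect this to be the main obstacle, since the Schwarzian of $f_\lambda(x)=\sin x/(x^2+\lambda)$ is an unwieldy expression; I would dispatch it either by a direct estimate showing $Sf_{\lambda^*}(x^*)<0$, or by invoking the negative Schwarzian derivative of $f_\lambda$ on $(0,\pi)$ (which underlies the use of Singer's results in Section~\ref{2.1}). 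Given $Sf_{\lambda^*}(x^*)<0$, the bifurcation theorem then yields the full picture: $x_\lambda$ is attracting just above $\lambda^*$ and repelling just below, and an attracting $2$-cycle of $f_\lambda$ appears as $\lambda$ decreases through $\lambda^*$ — a (supercritical) period-doubling bifurcation.

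Alternatively, avoiding the Schwarzian estimate, the bifurcation can be read off from the interval dynamics already developed. For $\lambda<\lambda^*$ close to $\lambda^*$ one has $(f_\lambda^2)'(x_\lambda)=(f_\lambda'(x_\lambda))^2>1$, while $(f_\lambda^2)'(0)=1/\lambda^2>1$ and $f_\lambda^2(0)=f_\lambda^2(\pi)=0$; hence $f_\lambda^2(x)-x$ is positive just right of $0$ and negative just left of $x_\lambda$, so the intermediate value theorem gives a fixed point $q\in(0,x_\lambda)$ of $f_\lambda^2$ with $q\notin\{0,x_\lambda\}$, and since the only fixed points of $f_\lambda$ on $[0,\pi)$ are $0$ and $x_\lambda$, the pair $\{q,f_\lambda(q)\}$ is a genuine $2$-cycle. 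For $\lambda\in(\lambda^*,\hat\lambda)$ the Case~II argument in the proof of Proposition~\ref{t2} shows $f_\lambda$ has no $2$-cycle in $(0,\pi)$. Combining the birth of this $2$-cycle below $\lambda^*$, its absence above $\lambda^*$, the stability switch of $x_\lambda$ (Proposition~\ref{t1}) and the transversality above — and using Singer's theorem, or the critical-point analysis of $f_\lambda^2$ illustrated in Figure~\ref{fof}, to see the new $2$-cycle is attracting — yields the period-doubling bifurcation at $\lambda^*$.
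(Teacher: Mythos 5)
Your ``alternative'' argument at the end is, almost verbatim, the paper's proof: the paper sets $j_{\lambda}(x)=f_{\lambda}^{2}(x)-x$, uses Proposition~\ref{t2} to rule out $2$-cycles for $\lambda\in(\lambda^{*},1)$, observes that $j'_{\lambda}(x_{\lambda})=(f'_{\lambda}(x_{\lambda}))^{2}-1>0$ for $\lambda<\lambda^{*}$ together with $j_{\lambda}(0)=0$, $j'_{\lambda}(0)>0$, $j_{\lambda}(\pi)=-\pi$, and extracts the new $2$-periodic point near $x_{\lambda}$ by the intermediate value theorem while $x_{\lambda}$ turns repulsive. So that part of your proposal is correct and coincides with the paper's route. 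Your first route --- verifying the hypotheses of the flip bifurcation theorem (multiplier $-1$ via Proposition~\ref{t1}(b), transversality from the monotonicity of $\phi$ and of $\lambda\mapsto x_{\lambda}$, and the cubic non-degeneracy $Sf_{\lambda^{*}}(x^{*})\neq 0$) --- is genuinely different and, if completed, stronger: it would give uniqueness and attractivity of the bifurcating $2$-cycle, which the paper does not establish. But as written it has a real gap: the non-degeneracy condition is never verified. You acknowledge that the Schwarzian of $\sin x/(x^{2}+\lambda)$ is unwieldy, and the fallback of ``invoking the negative Schwarzian derivative of $f_{\lambda}$ on $(0,\pi)$'' is not something the paper proves (its appeal to \cite{Singer} is only to a minimum-principle lemma for a specific configuration of critical points of $f_{\lambda}^{2}$), so that step would need an actual estimate. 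Since your elementary route stands on its own, the proposal as a whole is sound; just be aware that only the second half is a complete proof, and it buys exactly what the paper's proof buys (existence of the $2$-cycle and the stability switch, not attractivity of the new cycle).
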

\begin{proof} 
Consider $j_{\lambda}(x)= f_{\lambda}^2(x) -x$ for $x \in [0,\pi]$. Now, by Proposition \ref{t2},
\begin{equation*} 
j_{\lambda}(x)
  	\begin{cases}
  		>0 &  \text{for} \; x \in (0,x_{\lambda}),   \\
  		=0 &  \text{at} \; x=x_{\lambda}, \\
  		<0 &   \text{for} \; x \in (x_{\lambda}, \pi),
  	\end{cases}
\end{equation*}
 for $\lambda \in (\lambda^*, 1)$.
We have $j'_{\lambda}(x)= f'_{\lambda}(f_{\lambda}(x))f'_{\lambda}(x) -1$. Clearly, $j'_{\lambda}(x_{\lambda})>0$ when $\lambda \in (0,\lambda^*)$. Thus there exists $\delta>0$ such that 
\begin{equation*} 
j_{\lambda}(x)
  	\begin{cases}
  		<0 &  \text{for} \; x \in (x_{\lambda}- \delta,x_{\lambda}),   \\
  		=0 &  \text{at} \; x=x_{\lambda}, \\
  		>0 &   \text{for} \; x \in (x_{\lambda}, x_{\lambda}+ \delta),
  \end{cases}
\end{equation*}
for $\lambda \in (0,\lambda^*)$. Note that $j_{\lambda}(0)=0$, $j'_{\lambda}(0)>0$ and $j_{\lambda}(\pi)=-\pi$. Therefore, there exists $\epsilon>0$ such that $f_{\lambda}$ has a 2-periodic point in the small neighbourhood of $x_{\lambda}$ for $\lambda \in (\lambda^*- \epsilon, \lambda^*)$ (see in Figure \ref{fof}). Meanwhile, $x_{\lambda}$ is a repulsive fixed point. Hence, the result holds.
\end{proof}
\subsection{Dynamics of $f_{\lambda}(iy)$ and bifurcations}\label{2.2}
The dynamics of the function $f_{\lambda}(z)$ on the imaginary line are the same as the dynamics of $ h_{\lambda}(y) = \frac{\sinh{y}}{ \lambda -y^2}$ for $y \in \mathbb{R}\setminus \{-\sqrt{\lambda}, \sqrt{\lambda}\}$ since $f_{\lambda}(iy)= -ih_{\lambda}(iy)$.
The set $\{-\sqrt{\lambda}, \sqrt{\lambda}\}$ is denoted by $P$.
\subsubsection{Dynamics of $h_{\lambda}(y)$ and bifurcations}
The following subsection explores the dynamics of $h_{\lambda}(y)$.\\
 It is clear that the function $h_{\lambda}(y)$ is odd and continuously differentiable in $\mathbb{R} \setminus P$. Also, notice that $h_{\lambda}(y) \rightarrow -\infty$ as $y \rightarrow \infty$, and  $h_{\lambda}(y) \rightarrow -\infty$ as $y>0$ and $y \to \sqrt{\lambda}$ and $h_{\lambda}(y) \rightarrow \infty$ as $y<0$ and $y \to \sqrt{\lambda}$.
 By (\ref{g2}), the function $h_{\lambda}(y)$ has a local maximum value $ -\frac{\cosh{c_{\lambda}}}{2c_{\lambda}}$ in  $(\sqrt{\lambda}, \infty)$ and local minimum value $ \frac{\cosh{c_{\lambda}}}{2c_{\lambda}}$ in  $(-\infty, -\sqrt{\lambda})$,  hence,  $h_{\lambda}(y)<0$ for $y \in (\sqrt{\lambda}, \infty)$ and $h_{\lambda}(y)>0$ for $y \in (-\infty, -\sqrt{\lambda})$. Now, we have $$ h_{\lambda}''(y)= \frac{(\lambda -y^2)(\lambda-y^2 +2)\sinh{y}  +4y((\lambda -y^2)\cosh{y} + 2y \sinh{y})}{(\lambda -y^2)^3}$$
for $y \in \mathbb{R} \setminus P$. 
  Note that $h_{\lambda}''(y)$ is odd. Let $N(y)$  represent the numerator of the function $h_{\lambda}''(y)$. Observe that $N(y)>0$ for $y \in (0,\sqrt{\lambda})$. Also, we have $N'(y) = (x^4 -(6+2\lambda) x^2 + \lambda^2 +6\lambda  )\cosh x + 12x\sinh x$ and $N''(y) =12y \tanh^2{y} +12\tanh{y} -4y(\lambda -y^2) > 0$ for $y \in (\sqrt{\lambda}, \infty)$. Note that $N'(\sqrt{\lambda})>0$ and $N(\sqrt{\lambda})>0$.
Therefore, 
\begin{equation}
	h''_{\lambda}(y) 
	\begin{cases}
		>0 &  \text{for} \; y \in (-\infty, -\sqrt{\lambda}),   \\
		<0 &  \text{for} \; y \in ( -\sqrt{\lambda},0),   \\
		=0 &  \text{for} \; y=0,\\
		>0 &  \text{for} \; y \in ( 0, \sqrt{\lambda}),   \\
		<0 &   \text{for} \; y \in (\sqrt{\lambda}, \infty).
	\end{cases}													\label{ie2}
\end{equation}
\par	Following the analogous arguments of proof of Proposition \ref{t1}, the following proposition describes the multiplier of the fixed points of $h_{\lambda}(y)$.
\begin{prop} \label{t8}
	Let $ \displaystyle h_{\lambda}(y) = \frac {\sinh{y}}{\lambda- y^2}$ for $y \in \mathbb{R}\setminus P$ and $\lambda >0$. 
	\begin{enumerate}[label=(\alph*)]
		\item For $ \lambda < 1$, $0$ is the repulsive fixed point of $h_{\lambda}(y)$.
		\item For $ \lambda = 1$,  $0$ is the rationally indifferent fixed point of $h_{\lambda}(y)$.
		\item For $\lambda>1$, $h_{\lambda}(y)$ has two non-zero repulsive fixed points $-r_{\lambda}$ and $r_{\lambda}$, and $0$ is an attractive fixed point.
	\end{enumerate}
\end{prop}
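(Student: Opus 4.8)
The plan is to mirror the proof of Proposition \ref{t1}, combining the elementary fact that $h_\lambda$ fixes the origin with the convexity/concavity information recorded in (\ref{ie2}). First I would read off the multiplier at $0$: since $h_\lambda(0)=0$ and, from $h'_\lambda(y)=\dfrac{(\lambda-y^2)\cosh y+2y\sinh y}{(\lambda-y^2)^2}$, one gets $h'_\lambda(0)=1/\lambda$. Hence $0$ is repulsive when $\lambda<1$ (multiplier $>1$), rationally indifferent when $\lambda=1$ (multiplier $=1$), and attractive when $\lambda>1$ (multiplier $<1$); this already settles the classification of the fixed point $0$ in all three parts.

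To locate and classify the non-zero fixed points, set $k_\lambda(y):=h_\lambda(y)-y$, so that $k_\lambda(0)=0$, $k'_\lambda(y)=h'_\lambda(y)-1$ and $k''_\lambda(y)=h''_\lambda(y)$; by oddness of $h_\lambda$ it suffices to work on $(0,\infty)$. On $(\sqrt\lambda,\infty)$ we already know $h_\lambda(y)<0<y$, so $k_\lambda<0$ there and there is no fixed point. On $(0,\sqrt\lambda)$, (\ref{ie2}) gives $k''_\lambda>0$, i.e. $k_\lambda$ is strictly convex, while $h_\lambda(y)\to+\infty$ as $y\to\sqrt\lambda^{-}$, so $k_\lambda(y)\to+\infty$. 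For $\lambda\le 1$ we have $k'_\lambda(0)=1/\lambda-1\ge 0$, and a strictly convex function vanishing at $0$ with non-negative right derivative there stays strictly positive on $(0,\sqrt\lambda)$; hence $0$ is the only fixed point, which proves (a) and (b). For $\lambda>1$ we have $k'_\lambda(0)=1/\lambda-1<0$, so $k_\lambda$ is negative immediately to the right of $0$; being strictly convex and tending to $+\infty$ at $\sqrt\lambda$, it has exactly one zero $r_\lambda\in(0,\sqrt\lambda)$. Strict convexity of $k_\lambda$ on $[0,r_\lambda]$ together with $k_\lambda(0)=k_\lambda(r_\lambda)=0$ and $k_\lambda<0$ in between forces $k'_\lambda(r_\lambda)>0$, i.e. $h'_\lambda(r_\lambda)>1$, so $r_\lambda$ is repulsive; by oddness $-r_\lambda$ is its mirror image, also repulsive, and there are no other non-zero fixed points. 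Together with the multiplier computation this gives (c).

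I expect the only point needing genuine care is the convexity bookkeeping: deducing $k'_\lambda(r_\lambda)>0$ from strict convexity and the two zeros $0,r_\lambda$, and verifying that the one-sided behaviour near the poles $\pm\sqrt\lambda$ is exactly as (\ref{ie2}) and the sign discussion preceding it record. Everything else is a direct transcription of the argument for Proposition \ref{t1}.
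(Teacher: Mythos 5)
Your argument is correct, and since the paper offers no explicit proof here (it only says ``following the analogous arguments of Proposition \ref{t1}''), your write-up is a legitimate filling-in of that gap. The skeleton matches the intended one: compute the multiplier at $0$ from $h'_{\lambda}(0)=1/\lambda$, then study $k_{\lambda}(y)=h_{\lambda}(y)-y$ on $(0,\infty)$ using oddness. Where you genuinely diverge from the template of Proposition \ref{t1} is in how you classify the non-zero fixed point: the paper's Proposition \ref{t1} has to introduce the auxiliary functions $\Psi$ and $\phi$ to parametrize the fixed point by $\lambda$ and track its multiplier, because $f_{\lambda}$ changes concavity on $(0,\pi)$ (cf.\ (\ref{e1})). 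For $h_{\lambda}$ you can avoid all of that: by (\ref{ie2}) the function $k_{\lambda}$ is strictly convex on the whole interval $(0,\sqrt{\lambda})$ and blows up to $+\infty$ at the pole, so the sign of $k'_{\lambda}(0)=1/\lambda-1$ alone decides whether there is a second zero, and strict convexity between the two zeros $0$ and $r_{\lambda}$ immediately forces $k'_{\lambda}(r_{\lambda})>0$, i.e.\ $h'_{\lambda}(r_{\lambda})>1$. This is cleaner and also gives uniqueness of $r_{\lambda}$ for free (a convex function has at most two zeros), something the $\Psi$/$\phi$ route delivers less transparently. The only bookkeeping points you should state explicitly are the one-sided limit $h_{\lambda}(y)\to+\infty$ as $y\to\sqrt{\lambda}^{-}$ (the paper only records the limit from the right, which has the opposite sign) and the absence of fixed points on $(\sqrt{\lambda},\infty)$ from $h_{\lambda}<0$ there; both are as you say.
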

\begin{rem}
	Using Proposition \ref{t8}, it is observed that $h_{\lambda}(y)$ has a pitchfork subcritical bifurcation at $\lambda=1$.
\end{rem}
\par Following the analogous argument of proof of Proposition \ref{t1}, the following lemma describes the multiplier of fixed points of $-h_{\lambda}(y)$.
\begin{lem} \label{l1}
	Let $ \displaystyle h_{\lambda}(y)=  \frac{\sinh{y}}{\lambda- y^2}$ for $y \in \mathbb{R}\setminus P$ and $\lambda >0$. 
	There exist $1< \lambda_{1}< \lambda_{2}$ such that:
	\begin{enumerate}[label=(\alph*)]
		\item For $ \lambda= \lambda_{1}$, $-h_{\lambda}(y)$ has two non-zero rationally indifferent $-y_{\lambda}$ and $y_{\lambda}$ and two non-zero repulsive $-r_{\lambda,2}$ and $r_{\lambda,2}$ fixed points. See in Figure \ref{fi}~(a).
		\item For $\lambda_{1} < \lambda <\lambda_{2}$, $-h_{\lambda}(y)$ has two non-zero attractive $-a_{\lambda,2}$ and $a_{\lambda,2}$ and two non-zero repulsive $-r_{\lambda,2}$ and $r_{\lambda,2}$ fixed points. See in Figure \ref{fi}~(b).
		\item For $\lambda = \lambda_{2}$, $-h_{\lambda}(y)$ has two non-zero rationally indifferent $-y_{\lambda}$ and $y_{\lambda}$ fixed points. See in Figure \ref{fi}~(c).
		\item  For $\lambda > \lambda_{2}$, $-h_{\lambda}(y)$ has no non-zero fixed points. See in Figure \ref{fi}~(d).
	\end{enumerate}
\end{lem}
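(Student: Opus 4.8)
\emph{Proof proposal.} The plan is to imitate the proof of Proposition~\ref{t1}, the new feature being that the curve of nonzero fixed points is now \emph{folded}, and the fold is exactly what produces $\lambda_2$. First I would reduce everything to one real‑analytic function: a point $y\ne 0$ is a fixed point of $-h_\lambda$ iff $-\sinh y=y(\lambda-y^2)$, i.e.\ (dividing by $y$) iff $\lambda=\Phi(y):=y^2-\frac{\sinh y}{y}$. Since $\Phi$ is even and $-h_\lambda$ is odd, it suffices to count and classify the positive roots of $\Phi(y)=\lambda$ (each is matched by its negative; $0$ is always fixed and plays no role here). Next I would establish the shape of $\Phi$: $\Phi(0^{+})=-1$, $\Phi\to-\infty$, and $\Phi$ is strictly increasing on $(0,\tilde y)$ and strictly decreasing on $(\tilde y,\infty)$ for a unique $\tilde y>0$. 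This follows from $\Phi'(y)=\chi(y)/y^2$ with $\chi(y)=2y^3-y\cosh y+\sinh y$: here $\chi(0)=0$, $\chi'(y)=y(6y-\sinh y)$, and $\sinh y-6y$ is convex with a negative derivative at $0$ and limit $+\infty$, so $6y-\sinh y$ (hence $\chi'$) is positive on an initial interval and negative afterwards, which forces $\chi$ to increase then decrease and so to have a single positive zero $\tilde y$. Put $\lambda_2:=\Phi(\tilde y)=\max\Phi$; since $\chi(2)>0$ one has $\tilde y>2$, whence $\lambda_2\ge\Phi(2)=4-\tfrac12\sinh 2>1$. The count is then immediate: $\Phi(y)=\lambda$ has no positive root for $\lambda>\lambda_2$ (case (d)), the single root $\tilde y$ for $\lambda=\lambda_2$, and exactly two roots $\sqrt\lambda<y_1(\lambda)<\tilde y<y_2(\lambda)$ for $1<\lambda<\lambda_2$ (no root lies in $(0,\sqrt\lambda]$, as $\Phi(\sqrt\lambda)=\lambda-\sinh\sqrt\lambda/\sqrt\lambda<\lambda$, consistently with $-h_\lambda<0$ there), with $\lambda\mapsto y_1(\lambda)$ a strictly increasing bijection onto $(y_1(1),\tilde y)$ and $\lambda\mapsto y_2(\lambda)$ strictly decreasing.

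The multipliers are governed by a second auxiliary function. Plugging the fixed‑point relation $\lambda-y^2=-\sinh y/y$ into $h'_\lambda$ gives, at every nonzero fixed point, $h'_\lambda(y)=R(y):=\frac{2y^3-y\cosh y}{\sinh y}$, so the multiplier of $-h_\lambda$ there is $-R(y)$. I would then show $R$ is unimodal on $(0,\infty)$ — strictly increasing on $(0,d_1)$, strictly decreasing on $(d_1,\infty)$ for a unique $d_1>0$ — with $R(0^{+})=-1$, $R\to-\infty$, and maximum value $>1$: writing $D$ for the numerator of $R'$, one has $D(0)=0$ and the convenient identity $D'(y)=2\sinh y\,(6y-\sinh y-y^3)$, and since $E(y):=6y-\sinh y-y^3$ has $E(0)=0$, $E'(0)=5>0$, $E''<0$, a short nested monotonicity argument delivers the claim. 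Using $\chi(\tilde y)=0$ one computes $R(\tilde y)=-1$; combined with unimodality this shows $R=-1$ has $\tilde y$ as its unique root (on the decreasing branch, with $R<-1$ beyond it), while $R=1$ has exactly two roots $y^{*}<d_1<y^{**}<\tilde y$, with $R>1$ on $(y^{*},y^{**})$ and $R\in(-1,1)$ on $(y^{**},\tilde y)$.

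Now I would assemble the four cases. The outer fixed point $y_2>\tilde y$ has $R(y_2)<R(\tilde y)=-1$, hence multiplier $-R(y_2)>1$: it is repulsive in both (a) and (b). At $\lambda=\lambda_2$ the fixed point $\tilde y$ has multiplier $-R(\tilde y)=1$, i.e.\ is rationally indifferent, and with $-\tilde y$ this is case (c). For the inner fixed point, one checks — via the single explicit inequality $\Phi(1.817)>1$, which gives $y_1(1)<1.817$, followed by a short algebraic manipulation of $R$ along $\sinh y=y^3-y$ (reducing $R(y_1(1))>1$ to $y_1(1)^2<\tfrac{3+\sqrt{13}}{2}$) — that $R(y_1(1))>1$, whence $y^{*}<y_1(1)<y^{**}$. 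Since $\lambda\mapsto y_1(\lambda)$ is increasing and $y_1(\lambda)<\tilde y$ always, $y_1(\lambda)\in(y^{*},\tilde y)$ for all $\lambda\ge 1$. Setting $\lambda_1:=\Phi(y^{**})$, monotonicity of $\Phi$ on $(0,\tilde y)$ together with $y^{*}<y_1(1)<y^{**}<\tilde y$ gives $1<\lambda_1<\lambda_2$; and since $y_1$ increases with $\lambda$: at $\lambda=\lambda_1$ one has $y_1=y^{**}$ with multiplier $-R(y^{**})=-1$ (rationally indifferent) while $y_2$ is repulsive — case (a); for $\lambda_1<\lambda<\lambda_2$ one has $y_1\in(y^{**},\tilde y)$ with multiplier $-R(y_1)\in(-1,1)$ (attractive) while $y_2$ is repulsive — case (b). Renaming $y_1,y_2$ and their negatives as $\pm a_{\lambda,2},\pm r_{\lambda,2}$ (and as $\pm y_\lambda$ in the two indifferent cases) finishes the proof.

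The main obstacle is the unimodality of $R$ and the placement of its level sets. Unlike $\Phi$, the multiplier function is genuinely non‑monotone on the range of the inner fixed point, so one cannot simply say ``the multiplier sweeps monotonically through $-1$''; one has to go two or three derivatives deep — the identity $D'(y)=2\sinh y\,(6y-\sinh y-y^3)$ is what keeps this tractable — and then locate $y^{*},y^{**},\tilde y$ relative to $\lambda=1$ using a handful of concrete numerical estimates. Once this shape information is in hand, the remaining work is the same bookkeeping as in Proposition~\ref{t1}, now applied to a fold.
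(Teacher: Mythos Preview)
Your proposal is correct and follows precisely the approach the paper indicates --- the paper's own proof of this lemma is just the one-line reference ``Following the analogous argument of proof of Proposition~\ref{t1}'' together with Figure~\ref{fi}, so you have in fact supplied the details the paper omits, including the new feature (the fixed-point curve $\Phi(y)=y^2-\tfrac{\sinh y}{y}$ is folded rather than monotone, and the multiplier function $R$ is likewise unimodal) that makes this case genuinely harder than Proposition~\ref{t1}. One cosmetic point: your symbol $\Phi$ clashes with the paper's later $\Phi(y)=\tfrac{\sinh y}{y}+y^2$ used before Theorem~\ref{tt19}, so rename it when merging.
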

\begin{figure}[h!]
	\centering
    \subfloat[]{\includegraphics[width=0.45\textwidth]{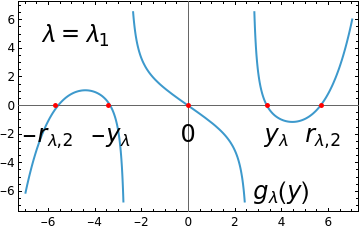}}
    \hspace{1cm}
	\subfloat[]{\includegraphics[width=0.45\textwidth]{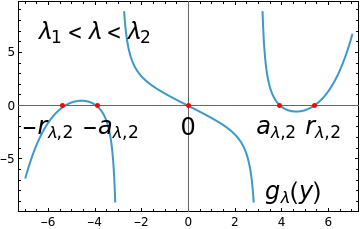}} \\ 
	\subfloat[]{\includegraphics[width=0.45\textwidth]{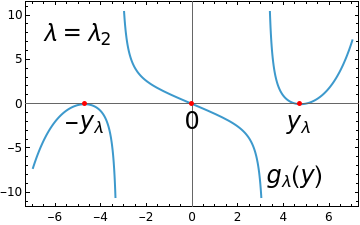}} 
    \hspace{1cm}
	\subfloat[]{\includegraphics[width=0.45\textwidth]{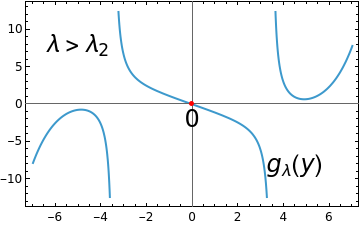}}
	\caption{The graphs of $g_{\lambda}(y)=-h_{\lambda}(y)-y$ for \text{(a)}\; $\lambda= \lambda_1,$  \text{(b)}\; $\lambda_1< \lambda < \lambda_2,$ \text{(c)} \; $\lambda= \lambda_2$ and $\text{(d)} \;\lambda > \lambda_2$.}
	\label{fi}
\end{figure}
\begin{rem} \label{rk1}
Using Proposition \ref{t8} and Lemma \ref{l1}, 	$\{-r_{\lambda,2},\; -a_{\lambda,2}, $ $\;  a_{\lambda,2}, \; r_{\lambda,2} \}$ are the non-zero fixed points of $-h_{\lambda}(y)$ but not of $h_{\lambda}(y)$ for $\lambda_{1} <\lambda < \lambda_{2}$. Further, 
$$  -r_{\lambda,2}< -a_{\lambda,2} < -\sqrt{\lambda}< -r_{\lambda} < 0 < r_{\lambda} < \sqrt{\lambda} <  a_{\lambda,2}< r_{\lambda,2}. $$
\end{rem}
\par The following lemma shows the non-existence of 2-periodic points of $-h_{\lambda}(y)$ for $\lambda> \lambda_2$.
\begin{lem}\label{l2}
  Let $h_{\lambda}(y)=  \frac{\sinh{y}}{\lambda- y^2}$ for $\lambda >\lambda_2$. The function $-h_{\lambda}(y)$ has no $2$-periodic points. 
\end{lem}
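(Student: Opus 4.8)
We outline a proof. Put $G_{\lambda}:=-h_{\lambda}$; it is odd and real-analytic on $\mathbb{R}\setminus P$, and a $2$-periodic point of $G_{\lambda}$ is a point $y$ with $G_{\lambda}(G_{\lambda}(y))=y$ but $G_{\lambda}(y)\neq y$. The plan is to locate any hypothetical $2$-cycle $\{a,b\}$ (meaning $G_{\lambda}(a)=b$, $G_{\lambda}(b)=a$, $a\neq b$) from the sign and monotonicity data for $h_{\lambda}$ recorded in and after (\ref{g2}), and then to exclude each possible location. By (\ref{g2}) and oddness, $G_{\lambda}$ is strictly decreasing on $(-\sqrt{\lambda},\sqrt{\lambda})$ and maps it homeomorphically onto $\mathbb{R}$ with $G_{\lambda}(0)=0$; also $G_{\lambda}>0$ on $(-\sqrt{\lambda},0)\cup(\sqrt{\lambda},\infty)$, $G_{\lambda}<0$ on $(0,\sqrt{\lambda})\cup(-\infty,-\sqrt{\lambda})$, and $G_{\lambda}(y)\to+\infty$ as $y\to\sqrt{\lambda}^{+}$. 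Replacing $\{a,b\}$ by $\{-a,-b\}$ if necessary we may take $a>0$ (a $2$-cycle cannot contain $0$). Chasing signs: if $a\in(0,\sqrt{\lambda})$ then $b=G_{\lambda}(a)<0$, and $b\in(-\infty,-\sqrt{\lambda})$ is impossible because then $G_{\lambda}(b)<0\neq a$, so $b\in(-\sqrt{\lambda},0)$; if $a\in(\sqrt{\lambda},\infty)$ then $b=G_{\lambda}(a)>0$, and $b\in(0,\sqrt{\lambda})$ is impossible because then $G_{\lambda}(b)<0\neq a$, so $b\in(\sqrt{\lambda},\infty)$. Hence it suffices to rule out the \emph{inner} case, $\{a,b\}\subset(-\sqrt{\lambda},\sqrt{\lambda})$ with $ab<0$, and the \emph{outer} case, $\{a,b\}\subset(\sqrt{\lambda},\infty)$.

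For the inner case I would argue algebraically. From $G_{\lambda}(a)=b$ and $G_{\lambda}(b)=a$ one gets $\sinh a=b(a^{2}-\lambda)$ and $\sinh b=a(b^{2}-\lambda)$, and adding these,
\[
\sinh a+\sinh b=(a+b)(ab-\lambda).
\]
Since $|a|,|b|<\sqrt{\lambda}$ we have $ab-\lambda<0$, whereas $\sinh$ is odd and strictly increasing, so $\operatorname{sgn}(\sinh a+\sinh b)=\operatorname{sgn}(a+b)$; comparing signs in the displayed identity forces $a+b=0$. Hence $b=-a$ and $G_{\lambda}(a)=-a$, i.e.\ $h_{\lambda}(a)=a$, so $a$ is a fixed point of $h_{\lambda}$. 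The inner case thus reduces to the fixed-point analysis of Proposition \ref{t8}: for $\lambda>\lambda_{2}>1$ the only non-zero candidates it leaves are the symmetric pairs coming from the non-zero fixed points of $h_{\lambda}$, which belong to the fixed-point structure already recorded in Proposition \ref{t8} and Lemma \ref{l1} rather than to any new period-two orbit — equivalently, no $2$-cycle of $G_{\lambda}$ lies inside $(-\sqrt{\lambda},\sqrt{\lambda})$ beyond what those results describe.

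The outer case is where the hypothesis $\lambda>\lambda_{2}$ really enters, and I expect it to be the crux. By Lemma \ref{l1}(d), for $\lambda>\lambda_{2}$ the map $G_{\lambda}$ has no non-zero fixed point, so $G_{\lambda}(y)-y$ has constant sign on the connected set $(\sqrt{\lambda},\infty)$; since $G_{\lambda}(y)\to+\infty$ as $y\to\sqrt{\lambda}^{+}$, that sign is positive, i.e.\ $G_{\lambda}(y)>y$ throughout $(\sqrt{\lambda},\infty)$. In particular $G_{\lambda}$ maps $(\sqrt{\lambda},\infty)$ into itself, so $G_{\lambda}(G_{\lambda}(y))>G_{\lambda}(y)>y$ there and $G_{\lambda}\circ G_{\lambda}$ has no fixed point in $(\sqrt{\lambda},\infty)$; by oddness the same holds on $(-\infty,-\sqrt{\lambda})$. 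Together with the inner case this exhausts all locations. The main obstacle is to make the case division airtight — every position of $\{a,b\}$ relative to the poles $\pm\sqrt{\lambda}$ must be accounted for — and to use $\lambda>\lambda_{2}$ exactly where needed, namely through Lemma \ref{l1}(d) in the outer case; the inner case then closes cleanly via the addition identity and Proposition \ref{t8}.
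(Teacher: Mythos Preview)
Your approach is genuinely different from the paper's. The paper argues dynamically: it shows that under iteration of $-h_{\lambda}$ every point of $(-r_{\lambda},r_{\lambda})$ tends to $0$, every point of $(r_{\lambda},\sqrt{\lambda})$ eventually leaves $(-\sqrt{\lambda},\sqrt{\lambda})$, and every point of $(\sqrt{\lambda},\infty)$ escapes to $+\infty$ (this last step, exactly as in your outer case, via Lemma \ref{l1}(d) and the observation $G_{\lambda}(y)>y$). You instead attack the $2$-cycle equations directly; the addition identity $\sinh a+\sinh b=(a+b)(ab-\lambda)$, together with $\operatorname{sgn}(\sinh a+\sinh b)=\operatorname{sgn}(a+b)$, is a clean device that does not appear in the paper and disposes of the inner case in one stroke. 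The sign-chasing that reduces everything to the inner and outer cases is correct and complete.

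There is, however, a real problem in how you close the inner case --- and it is not entirely your fault, because it exposes an imprecision in the lemma as stated. Your own computation shows that an inner $2$-cycle must satisfy $b=-a$ with $h_{\lambda}(a)=a$; for $\lambda>\lambda_{2}>1$ Proposition \ref{t8}(c) then gives $a=r_{\lambda}$, and $\{r_{\lambda},-r_{\lambda}\}$ \emph{is} a genuine $2$-periodic orbit of $G_{\lambda}=-h_{\lambda}$ (indeed $G_{\lambda}(r_{\lambda})=-r_{\lambda}\neq r_{\lambda}$). Your phrase ``belong to the fixed-point structure already recorded \ldots\ rather than to any new period-two orbit'' does not make that orbit disappear. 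The paper's proof has exactly the same blind spot: its orbit analysis covers $(-r_{\lambda},r_{\lambda})$, $(r_{\lambda},\sqrt{\lambda})$ and $(\sqrt{\lambda},\infty)$ but never examines $y=\pm r_{\lambda}$. What both arguments actually establish --- and what is really used in Proposition \ref{t9}(d) --- is that the \emph{only} $2$-cycle of $-h_{\lambda}$ is $\{r_{\lambda},-r_{\lambda}\}$, equivalently that $h_{\lambda}$ has no $2$-periodic points. You should state your conclusion in those terms rather than try to talk around the pair $\{r_{\lambda},-r_{\lambda}\}$.
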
 
\begin{proof} 
Consider $g_{\lambda}(y)= h_{\lambda}(y)-y$. Since $g'_{\lambda}(0)<0$, $g''_{\lambda}(y)>0$ for $y \in (0,\sqrt{\lambda})$ and $g'_{\lambda}(y) \to \infty$ as $y \to \sqrt{\lambda}^{-}$, there exists a unique critical point of $g_{\lambda}(y)$ in the interval $(0,\sqrt{\lambda})$. Therefore, by Proposition \ref{t8}, 
\begin{equation*}
	g_{\lambda}(y) 
	\begin{cases}
		<0 &  \text{for} \; y \in (0,r_{\lambda}),   \\
		=0 &  \text{at} \; y = r_{\lambda},   \\
		>0 &  \text{for} \; y \in ( r_{\lambda}, \sqrt{\lambda}).
	\end{cases}											\label{ie2}
\end{equation*}
Further, 
\begin{equation*}
	h_{\lambda}(y) 
	\begin{cases}
		<y &  \text{for} \; y \in (0,r_{\lambda}),   \\
		=y &  \text{at} \; y = r_{\lambda},   \\
		>y &  \text{for} \; y \in ( r_{\lambda}, \sqrt{\lambda}).
	\end{cases}											\label{ie2}
\end{equation*}
If $y \in (0,r_{\lambda})$, then the sequence $\{h^n_{\lambda}(y)\}_{n>0}$ is strictly decreasing and bounded below by $0$. Therefore, by Monotonic convergence theorem, $\{h^n_{\lambda}(y)\}_{n>0}$ converges to $0$. Similarly, $\{h^n_{\lambda}(y)\}_{n>0}$ converges to $0$ for  $y \in (-r_{\lambda},0)$.  Note that $|h^{n}_{\lambda}(y)|= |(-h_{\lambda})^{n}(y)|$ for every $n \in \mathbb{N}$. 
Thus,  $\{(-h_{\lambda})^n(y)\}_{n>0}$ converges to $0$ for $y \in (-r_{\lambda},r_{\lambda})$. Further, for $y \in (r_{\lambda}, \sqrt{\lambda})$,  $h_{\lambda}(y)>y$ and hence, there exists $k>0$ such that $h^{k}_{\lambda}(y)> \sqrt{\lambda}$. Since $|h_{\lambda}(c_{\lambda})| > \sqrt{\lambda}$, $|(-h_{\lambda})^{k+1}(y)|> \sqrt{\lambda}$ for $y \in (r_{\lambda}, \sqrt{\lambda})$.\\
We claim that if $y \in (-\infty, - \sqrt{\lambda})$,  then $(-h_{\lambda})^n(y) \rightarrow -\infty$, and if $y \in ( \sqrt{\lambda}, \infty)$, then $(-h_{\lambda})^n(y) \rightarrow \infty$ as $n \rightarrow \infty$.\\
Note that $-h_{\lambda}(y) \to \infty$ as $y \to \infty$ and $-h'_{\lambda}(y) \to \infty$ as $y \to \sqrt{\lambda}^{-}$ (or $\infty$). By Part (d) of Lemma \ref{l1}, $-h_{\lambda}(y)$ has no fixed points for $y \in (\sqrt{\lambda}, \infty)$, so,  $-h_{\lambda}(y)>y$. Also, by (\ref{g2}), $\{ (-h_{\lambda})^n(y)\}_{n>0}$ is strictly increasing and unbounded above. Therefore, $ (-h_{\lambda})^n(y) \to \infty$ as $n \to \infty$. Similarly, if $y \in (-\infty, -\sqrt{\lambda})$, then  $ (-h_{\lambda})^n(y) \to -\infty$ as $n \to \infty$.\\
Hence, the result holds.
\end{proof}
\par In the following proposition,  the multiplier of non-zero 2-periodic points of $h_{\lambda}(y)$ is established by Lemma \ref{l1} and Lemma \ref{l2}. 
\begin{prop} \label{t9}
Let $ \displaystyle h_{\lambda}(y)=   \frac{\sinh{y}}{\lambda- y^2}$ for $\lambda >0$.
\begin{enumerate}[label=(\alph*)]
\item If $\lambda= \lambda_1$, then  $h_{\lambda}(y)$ has a rationally indifferent $y_{\lambda}$ and a repulsive $r_{\lambda,2}$ $2$-periodic points.
\item If  $\lambda_1< \lambda< \lambda_{2}$, then $a_{\lambda,2}$ is an attractive and  $r_{\lambda,2}$ is a  repulsive $2$-periodic points of $h_{\lambda}(y)$.
\item If $\lambda= \lambda_{2}$, then $y_{\lambda}$ is a rationally indifferent $2$-periodic point of $h_{\lambda}(y)$. 
\item If $\lambda> \lambda_{2}$, then  $h_{\lambda}(y)$ has no $2$-periodic points.
\end{enumerate}
\end{prop}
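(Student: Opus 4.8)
The plan is to use the oddness of $h_\lambda$ to transfer the information about fixed points of $-h_\lambda$ (Lemma \ref{l1}) and about $2$-periodic points of $-h_\lambda$ (Lemma \ref{l2}) into statements about $2$-periodic points of $h_\lambda$. The starting point is the identity
$$(-h_\lambda)\circ(-h_\lambda) = h_\lambda\circ h_\lambda,$$
which holds because $h_\lambda(-y)=-h_\lambda(y)$, so $(-h_\lambda)\big((-h_\lambda)(y)\big) = -h_\lambda\big(-h_\lambda(y)\big) = h_\lambda\big(h_\lambda(y)\big)$. Hence, if $p\neq 0$ is a fixed point of $-h_\lambda$, then $h_\lambda(p)=-p\neq p$ while $h_\lambda^2(p)=(-h_\lambda)^2(p)=p$, so $p$ is a genuine $2$-periodic point of $h_\lambda$ whose cycle is the symmetric pair $\{p,-p\}$. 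Applying this to the non-zero fixed points of $-h_\lambda$ supplied by Lemma \ref{l1} produces the claimed $2$-periodic points $y_\lambda$, $r_{\lambda,2}$, $a_{\lambda,2}$ of $h_\lambda$ in cases (a), (b) and (c).

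Next I would compare multipliers. Since $h_\lambda$ is odd, $h'_\lambda$ is even, so the multiplier of $h_\lambda$ along the symmetric $2$-cycle $\{p,-p\}$ is
$$(h_\lambda^2)'(p)=h'_\lambda\big(h_\lambda(p)\big)\,h'_\lambda(p)=h'_\lambda(-p)\,h'_\lambda(p)=\big(h'_\lambda(p)\big)^2,$$
whereas the multiplier of the fixed point $p$ of $-h_\lambda$ is $(-h_\lambda)'(p)=-h'_\lambda(p)$. Thus $(h_\lambda^2)'(p)=\big((-h_\lambda)'(p)\big)^2$, so $p$ is attractive (respectively repulsive) for $-h_\lambda$ precisely when $(h_\lambda^2)'(p)<1$ (respectively $>1$), i.e.\ when $\{p,-p\}$ is attractive (respectively repulsive) for $h_\lambda$; and $|(-h_\lambda)'(p)|=1$ forces $(h_\lambda^2)'(p)=1$, making $\{p,-p\}$ rationally indifferent. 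Combining this with the trichotomy in Lemma \ref{l1}(a)--(c) yields parts (a), (b) and (c).

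For part (d), let $\lambda>\lambda_2$ and suppose $p$ is a $2$-periodic point of $h_\lambda$. Then $(-h_\lambda)^2(p)=h_\lambda^2(p)=p$, so $p$ is periodic of period $1$ or $2$ for $-h_\lambda$. By Lemma \ref{l2} it cannot have period $2$ for $-h_\lambda$, and if it is a fixed point of $-h_\lambda$ then Lemma \ref{l1}(d) forces $p=0$, contradicting that $p$ is $2$-periodic for $h_\lambda$ (as $0$ is a fixed point of $h_\lambda$). Hence $h_\lambda$ has no $2$-periodic points, which is (d). Alternatively, one may invoke the dynamical description from the proof of Lemma \ref{l2}: for $\lambda>\lambda_2$ every $y\in\mathbb{R}\setminus P$ satisfies $|h_\lambda^n(y)|\to 0$ or $|h_\lambda^n(y)|\to\infty$, leaving no room for a non-zero $2$-cycle.

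All the steps are short, so I do not anticipate a serious obstacle; the only point that needs care is keeping the correspondence tight — checking that the points from Lemma \ref{l1} are genuinely $2$-periodic rather than fixed, and, if an exhaustive list of $2$-cycles is wanted, ruling out asymmetric $2$-cycles of $h_\lambda$, which is equivalent to ruling out $2$-cycles of $-h_\lambda$. That last exclusion is exactly Lemma \ref{l2} for $\lambda>\lambda_2$, and for $\lambda\le\lambda_2$ it follows from the monotonicity structure of $-h_\lambda$ recorded in (\ref{g2}) together with the analysis underlying Lemma \ref{l1}.
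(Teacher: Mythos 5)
Your proposal is correct and follows essentially the same route as the paper: both rest on the identity $h_\lambda^2=(-h_\lambda)^2$ (from oddness of $h_\lambda$) to translate the fixed-point trichotomy of $-h_\lambda$ from Lemma \ref{l1}, together with Lemma \ref{l2} and Remark \ref{rk1}, into statements about $2$-cycles of $h_\lambda$. Your write-up is in fact more explicit than the paper's (the multiplier computation $(h_\lambda^2)'(p)=\bigl(h'_\lambda(p)\bigr)^2$ and the case analysis for part (d) are left implicit there), but there is no substantive difference in method.
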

\begin{proof}
Observe that $h^2_{\lambda}(y) = (-h_{\lambda})^2(y)$. So, by Lemma \ref{l1}, Lemma \ref{l2} and Remark \ref{rk1}, the set of non-zero fixed points of $-h_{\lambda}(y)$ are the set of $2$-periodic points of $h_{\lambda}(y)$.   Therefore, (a), (b), (c), and (d) hold.	
\end{proof}
\begin{rem}
Using Proposition \ref{t9}, a tangent bifurcation occurs at $\lambda = \lambda_{2}$ for $h_{\lambda}(y)$.
\end{rem}
\par The following proposition describes the dynamics of $h_{\lambda}(y)$ for $\lambda \geq \lambda_{1}$.
\begin{prop} \label{t13}
Let $ \displaystyle h_{\lambda}(y)=   \frac{\sinh{y}}{\lambda- y^2}$ for $y \in \mathbb{R}\setminus P$ and $\lambda >0$. If $\lambda >1$, then $h^n_{\lambda}(y) \to 0$ for $y \in (-r_{\lambda}, r_{\lambda})$.  
\begin{enumerate}[label=(\alph*)]
\item  If $\lambda =\lambda_1$, then $h_{\lambda}^{n}(y) \rightarrow y_{\lambda} $ as $n \rightarrow \infty$  for $y \in (r'_{\lambda,2},r_{\lambda,2})$, where $r'_{\lambda,2}$ is the preimage of $r_{\lambda,2}$ under $-h_{\lambda}(y)$. Also, if $y \in (\sqrt{\lambda},r'_{\lambda,2}) \cup (r_{\lambda,2}, \infty) $, then $|h_{\lambda}^{n}(y)| \rightarrow \infty$ as $n \rightarrow \infty$.
\item If $ \lambda_{1} <\lambda < \lambda_{2}$, then $h_{\lambda}^{n}(y) \rightarrow a_{\lambda,2} $ as $n \rightarrow \infty$  for $y \in (r'_{\lambda,2},r_{\lambda,2})$, where $r'_{\lambda,2}$ is the preimage of $r_{\lambda,2}$ under $-h_{\lambda}(y)$. Also, if $y \in (\sqrt{\lambda},r'_{\lambda,2}) \cup (r_{\lambda,2}, \infty) $, then $|h_{\lambda}^{n}(y)| \rightarrow \infty$ as $n \rightarrow \infty$.
\item  If $\lambda =\lambda_2$, then $h_{\lambda}^{n}(y) \rightarrow y_{\lambda} $ as $n \rightarrow \infty$  for $y \in (y'_{\lambda},y_{\lambda})$, where $y'_{\lambda}$ is the preimage of $y_{\lambda}$ under $-h_{\lambda}(y)$. Also, if $y \in (\sqrt{\lambda},y'_{\lambda}) \cup (y_{\lambda}, \infty) $, then $|h_{\lambda}^{n}(y)| \rightarrow \infty$ as $n \rightarrow \infty$.
\item If $\lambda > \lambda_{2}$, then $\left|h^n_{\lambda}(y)\right| \to \infty$ as $n \rightarrow \infty$ for $y \in (-\infty, -r_{\lambda}) \setminus M \cup (r_{\lambda}, \infty) \setminus N$. Here, $M$ and $N$ are the sets of all preimages of $-\sqrt{\lambda}$ and $\sqrt{\lambda}$ under $h_{\lambda}(y)$, respectively.		
\end{enumerate}
\end{prop}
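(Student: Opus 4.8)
The plan is to reduce everything to one–dimensional real dynamics of the odd maps $h_{\lambda}$ and $-h_{\lambda}$ via the identity $h_{\lambda}^{2}=(-h_{\lambda})^{2}$ (true because $h_{\lambda}$ is odd). Iterating it, $h_{\lambda}^{2n}=(-h_{\lambda})^{2n}$ and $h_{\lambda}^{2n+1}(y)=-(-h_{\lambda})^{2n+1}(y)$, so $|h_{\lambda}^{n}(y)|=|(-h_{\lambda})^{n}(y)|$ for every $n$, and $(-h_{\lambda})^{n}(y)\to p$ forces $h_{\lambda}^{2n}(y)\to p$ and $h_{\lambda}^{2n+1}(y)\to -p$; this is the precise meaning of ``$h_{\lambda}^{n}(y)\to a_{\lambda,2}$'' and ``$h_{\lambda}^{n}(y)\to y_{\lambda}$'' in (a)--(c), since $a_{\lambda,2},y_{\lambda}$ are fixed points of $-h_{\lambda}$ but not of $h_{\lambda}$. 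By (\ref{g2}), $h_{\lambda}$ is strictly increasing on $(-c_{\lambda},c_{\lambda})\setminus P$, and $h_{\lambda}$ sends $(\sqrt{\lambda},\infty)$ into $(-\infty,-\sqrt{\lambda})$ and back (this uses $\frac{\cosh c_{\lambda}}{2c_{\lambda}}>\sqrt{\lambda}$, recorded in the proof of Lemma \ref{l2}), so $(\sqrt{\lambda},\infty)\cup(-\infty,-\sqrt{\lambda})$ is $h_{\lambda}$–invariant and its dynamics are governed by the single half–line analysis of $-h_{\lambda}$ on $(\sqrt{\lambda},\infty)$. Finally, for $\lambda>1$ the sign analysis of $g_{\lambda}(y)=h_{\lambda}(y)-y$ from the proof of Lemma \ref{l2} gives $0<h_{\lambda}(y)<y$ on $(0,r_{\lambda})$, so $(0,r_{\lambda})$ is $h_{\lambda}$–invariant, $\{h_{\lambda}^{n}(y)\}$ decreases monotonically to the unique fixed point $0$ of $h_{\lambda}$ in $[0,r_{\lambda})$ (Proposition \ref{t8}), and oddness gives the same on $(-r_{\lambda},0)$: this is the first claim and the $(-r_{\lambda},r_{\lambda})$ part of (a)--(d).

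Next I would treat the outer region for parts (a), (b), (c). On $(\sqrt{\lambda},\infty)$ the map $-h_{\lambda}$ descends from $+\infty$ to its minimum $\frac{\cosh c_{\lambda}}{2c_{\lambda}}$ at $c_{\lambda}$ and rises back to $+\infty$, with fixed points (Lemma \ref{l1}) $\{y_{\lambda}<r_{\lambda,2}\}$ in (a), $\{a_{\lambda,2}<r_{\lambda,2}\}$ in (b), $\{y_{\lambda}\}$ in (c); here $r'_{\lambda,2}$ (resp. $y'_{\lambda}$) is the preimage of $r_{\lambda,2}$ (resp. $y_{\lambda}$) on the decreasing branch $(\sqrt{\lambda},c_{\lambda})$, so $-h_{\lambda}$ carries $(\sqrt{\lambda},r'_{\lambda,2})$ onto $(r_{\lambda,2},\infty)$ (resp. $(\sqrt{\lambda},y'_{\lambda})$ onto $(y_{\lambda},\infty)$). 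On $(r_{\lambda,2},\infty)$ (resp. $(y_{\lambda},\infty)$) there is no fixed point and the graph lies above the diagonal, so $-h_{\lambda}(y)>y$ and $(-h_{\lambda})^{n}(y)$ increases to $\infty$; hence $|h_{\lambda}^{n}(y)|\to\infty$ on $(\sqrt{\lambda},r'_{\lambda,2})\cup(r_{\lambda,2},\infty)$ (resp. the $y'_{\lambda}$ version), and oddness handles the mirror intervals, giving the divergence assertions. On the remaining interval $(r'_{\lambda,2},r_{\lambda,2})$ (resp. $(y'_{\lambda},y_{\lambda})$) both endpoints map to the fixed point $r_{\lambda,2}$ (resp. $y_{\lambda}$) and no other fixed point lies inside; one checks $\frac{\cosh c_{\lambda}}{2c_{\lambda}}$ falls inside it, so $-h_{\lambda}$ maps it into itself, and then: in case (c) one in fact has $-h_{\lambda}(y)>y$ throughout $(\sqrt{\lambda},y_{\lambda})$ (the tangent bifurcation at $\lambda_{2}$ means the graph meets the diagonal only at $y_{\lambda}$, from above), so the orbit increases monotonically to $y_{\lambda}$; in case (b) every orbit converges to the attracting fixed point $a_{\lambda,2}$ once one knows $(r'_{\lambda,2},r_{\lambda,2})$ contains no further periodic cycle of $-h_{\lambda}$; in case (a) the parabolic fixed point $y_{\lambda}$ (multiplier $-1$) attracts its immediate basin $(r'_{\lambda,2},r_{\lambda,2})$ because $\lambda_{1}$ is a supercritical period–doubling point for $-h_{\lambda}$ ($a_{\lambda,2}$ being born attracting for $\lambda>\lambda_{1}$). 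Translating back via the first paragraph gives the stated limits.

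For part (d): by Part (d) of Lemma \ref{l1}, $-h_{\lambda}$ has no fixed point in $(\sqrt{\lambda},\infty)$, and its graph being above the diagonal near $\sqrt{\lambda}^{+}$ with no crossing forces $-h_{\lambda}(y)>y$ there, so $(-h_{\lambda})^{n}(y)$ increases to $\infty$; thus $|h_{\lambda}^{n}(y)|\to\infty$ on $(\sqrt{\lambda},\infty)$ and, by oddness, on $(-\infty,-\sqrt{\lambda})$. For $y\in(r_{\lambda},\sqrt{\lambda})$, as in the proof of Lemma \ref{l2}, $h_{\lambda}$ is increasing with $h_{\lambda}(y)>y$ and $h_{\lambda}(y)\to+\infty$ as $y\to\sqrt{\lambda}^{-}$, so the orbit strictly increases and, having no fixed point in $(r_{\lambda},\sqrt{\lambda}]$ to stop at, reaches $h_{\lambda}^{k}(y)\ge\sqrt{\lambda}$ for some $k$; equality means the orbit meets the pole and dies, which is exactly membership in the set $N$ of preimages of $\sqrt{\lambda}$, and otherwise $h_{\lambda}^{k}(y)\in(\sqrt{\lambda},\infty)$ and the previous case applies. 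Oddness handles $(-\sqrt{\lambda},-r_{\lambda})\setminus M$, so $|h_{\lambda}^{n}(y)|\to\infty$ on $\big((-\infty,-r_{\lambda})\setminus M\big)\cup\big((r_{\lambda},\infty)\setminus N\big)$.

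The main obstacle is the claim in (a)--(b) that $(r'_{\lambda,2},r_{\lambda,2})$ is exactly the immediate (parabolic) basin and carries no spurious periodic cycle of $-h_{\lambda}$ — equivalently, that $h_{\lambda}$ has no $4$–cycle meeting this region. The route I would take, mirroring Case II of the proof of Proposition \ref{t2}, is to rule out $2$–periodic points of $-h_{\lambda}$ on $(\sqrt{\lambda},\infty)$ via a negative–Schwarzian (Singer, Lemma 2.6 of \cite{Singer}) argument: once $-h_{\lambda}$ has negative Schwarzian there, its single critical point $c_{\lambda}$ is pulled into the immediate basin (the boundary points $\sqrt{\lambda}$ and $\infty$ escape), pinning the basin to $(r'_{\lambda,2},r_{\lambda,2})$ and excluding attracting cycles other than $a_{\lambda,2}$; the parabolic case (a) then follows from the same input plus the Leau–Fatou normal form at $y_{\lambda}$. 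Verifying the sign of the Schwarzian of $-h_{\lambda}(y)=\sinh y/(y^{2}-\lambda)$ on $(\sqrt{\lambda},\infty)$, together with the elementary but fiddly bookkeeping placing $\frac{\cosh c_{\lambda}}{2c_{\lambda}}$ inside $(r'_{\lambda,2},r_{\lambda,2})$, is the calculational heart of the argument.
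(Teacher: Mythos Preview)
Your proposal is correct and follows essentially the same route as the paper: reduce to the dynamics of $-h_{\lambda}$ on $(\sqrt{\lambda},\infty)$ via the oddness identity $h_{\lambda}^{2}=(-h_{\lambda})^{2}$, observe that $-h_{\lambda}$ is unimodal there with the fixed-point structure supplied by Lemma~\ref{l1}, and then handle the divergence on $(\sqrt{\lambda},r'_{\lambda,2})\cup(r_{\lambda,2},\infty)$ and the convergence on $(r'_{\lambda,2},r_{\lambda,2})$ separately; part (d) is dispatched exactly as in Lemma~\ref{l2}.

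The one place where your write-up differs in emphasis from the paper is the ``main obstacle'' in (a)--(b). The paper does not compute the Schwarzian of $-h_{\lambda}$ directly; it simply says ``by parallel arguments of the proof of Proposition~\ref{t2}'', and in that proof the exclusion of a spurious $2$-cycle is done by contradiction: assuming a $2$-periodic point exists, one orders the three critical points of the second iterate relative to the fixed and periodic points and invokes Lemma~2.6 of \cite{Singer} to force the multiplier at the attracting fixed point above $1$, a contradiction. Your plan to verify negative Schwarzian and then appeal to Singer's theorem is an equivalent (and arguably cleaner) packaging of the same idea, but note that the paper never actually carries out the Schwarzian computation you flag as ``the calculational heart''; it sidesteps it with the ordering-plus-contradiction trick, which transplants verbatim to $-h_{\lambda}$ on $[r'_{\lambda,2},r_{\lambda,2}]$. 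Either route works.
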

\begin{proof} 
By Lemma \ref{l2}, for $\lambda >1$, $h^n_{\lambda}(y) \to 0$ for $y \in (-r_{\lambda}, r_{\lambda})$. \\
 (b) For $\lambda_{1}< \lambda< \lambda_{2}$:\\
	 Let $r'_{\lambda,2}$ be the preimage of $r_{\lambda,2}$ under $-h_{\lambda}(y)$. Notice that $-h_{\lambda}(y)$ is unimodal on $[r'_{\lambda,2},r_{\lambda,2}]$. By parallel arguments of proof of Proposition \ref{t2}, 
	   $(-h_{\lambda})^{n}(y) \rightarrow a_{\lambda,2}$ as $n \rightarrow \infty$ for $y \in (r'_{\lambda,2},r_{\lambda,2})$. Therefore,  $(h_{\lambda})^{2n}(y) \rightarrow a_{\lambda,2}$ as $n \rightarrow \infty$ for $y \in (r'_{\lambda,2},r_{\lambda,2})$, since $h^2_{\lambda}(y) = (-h_{\lambda})^2(y)$. By Lemma \ref{l1}, $-h_{\lambda}(y)$ has no fixed points for $y \in (r_{\lambda,2}, \infty)$. Also, $-h_{\lambda}(y)$ is strictly increasing on $(r_{\lambda,2}, \infty)$, $(-h_{\lambda})'(r_{\lambda,2})>1$ and $-h_{\lambda}(y) \to \infty$ as $y \to \infty$. Therefore, $-h_{\lambda}(y)>y$. Further, if $y \in (\sqrt{\lambda},r'_{\lambda,2}) \cup (r_{\lambda,2}, \infty)$, then $(-h_{\lambda})^{n}(y) \rightarrow \infty$ as $n \rightarrow \infty$. Since $h^{2n}_{\lambda}(y)= (-h_{\lambda})^{2n}(y)$ and $-h^{2n-1}_{\lambda}(y)= (-h_{\lambda})^{2n-1}(y)$ for every $n \in \mathbb{N}$, $h^{2n}_{\lambda}(y) \to \infty$ and $h^{2n-1}_{\lambda}(y) \to -\infty$ as $n \to \infty$ when $y \in (\sqrt{\lambda},r'_{\lambda,2}) \cup (r_{\lambda,2}, \infty)$. Hence, the result holds.\\
 (a)  Using parallel arguments of Part (b) of Proposition \ref{t13}, the result holds. \\
 (c)  Using parallel arguments of Part (b) of Proposition \ref{t13}, the result holds.\\
 (d)  Using parallel arguments of Lemma \ref{l2}, the result holds.
 \end{proof}
\section{Results} \label{2.3}
For the $p$-periodic attractive cycle   $\{z_0,\cdots, z_{p-1}\}$ of $z_0$, the basin of attraction
$$A_p(z_0) := \bigcup \limits_{j=0}^{p-1}   A(z_j,f^p), $$
where $A(z_j,f^p) := \{ z \in \mathbb{C} : f^{np}(z) \to z_j \; \text{as} \; n \to \infty \}$ and $0 \leq j \leq p-1$.
\par The following theorem describes the Fatou set $\mathcal{F}(f_{\lambda})$  for $\lambda>\lambda_1$.
\begin{thm}\label{t17}
Let $f_{\lambda} \in \mathbb{S}$.
\begin{enumerate}[label=(\alph*)]
\item If $\lambda_{1}< \lambda< \lambda_{2}$, then the Fatou set  $\mathcal{F}(f_{\lambda}) = A_1(0) \cup A_2(ia_{\lambda,2})$, where ${a_{\lambda,2}}$ is defined in Proposition \ref{t9}. 
\item If $\lambda> \lambda_{2}$, then the Fatou set $\mathcal{F}(f_{\lambda}) = A_1(0)$. 
\end{enumerate}
\end{thm}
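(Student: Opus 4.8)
The plan is to establish the two inclusions of the claimed equality separately, and the inclusion $A_1(0)\cup A_2(ia_{\lambda,2})\subseteq\mathcal F(f_\lambda)$ is the easy one. For $\lambda>\lambda_1>1$ the point $0$ is an attracting fixed point of $f_\lambda$ in $\mathbb C$ (Part (e) of Proposition \ref{t1}, together with $f'_\lambda(0)=1/\lambda<1$), so $A_1(0)$ is a nonempty open subset of $\mathcal F(f_\lambda)$; and for $\lambda_1<\lambda<\lambda_2$ the relation $f_\lambda(iy)=ih_\lambda(y)$ conjugates the imaginary-axis dynamics of $f_\lambda$ to the real dynamics of $h_\lambda$, so by Part (b) of Proposition \ref{t9} the pair $\{ia_{\lambda,2},-ia_{\lambda,2}\}$ is an attracting $2$-cycle of $f_\lambda$ (the multiplier being preserved by the conjugacy), whence $A_2(ia_{\lambda,2})$ is a nonempty open subset of $\mathcal F(f_\lambda)$. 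All the weight of the theorem lies in the reverse inclusion.

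To prove the reverse inclusion I would classify the Fatou components. Since $f_\lambda\in\mathcal B$ by Proposition \ref{p2}, I may use the following standard facts: (i) $f_\lambda$ has no Baker domains, because the escaping set of a function in $\mathcal B$ is contained in the Julia set; (ii) $f_\lambda$ has no Siegel disks or Herman rings, since a boundary component of such a cyclic domain must lie in $\overline{\mathcal P(f_\lambda)}$, while $S(f_\lambda)\subseteq\mathbb R\cup i\mathbb R$ and both coordinate axes are forward invariant under $f_\lambda$, so $\mathcal P(f_\lambda)\subseteq\mathbb R\cup i\mathbb R$ and $\overline{\mathcal P(f_\lambda)}\subseteq\mathbb R\cup i\mathbb R\cup\{\infty\}$, a set which, having empty interior and lying in two lines through the origin, cannot contain a boundary component of a rotation domain; (iii) $f_\lambda$ has no wandering domains (discussed below). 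Granting (i)--(iii), every Fatou component of $f_\lambda$ is eventually periodic and every periodic cycle of components is an attracting or a parabolic basin; by Fatou's theorem, the immediate basin of each attracting cycle and each attracting petal of each parabolic cycle contains a point of $S(f_\lambda)$. It therefore remains to follow the forward orbits of the points of $S(f_\lambda)$.

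By Proposition \ref{p2}, $S(f_\lambda)$ consists of the real critical values $\cos p_{\lambda,n}/(2p_{\lambda,n})$ with $n\in\mathbb Z$, the two purely imaginary critical values $\pm i\cosh c_\lambda/(2c_\lambda)$, the asymptotic value $0$, and the single accumulation point $0$. Since $\lambda>\lambda_1>1$, Part (a) of Proposition \ref{t2} gives $f^n_\lambda(x)\to0$ for every real $x$, so $\mathbb R\subseteq A_1(0)$; in particular all real critical values and $0$ lie in $A_1(0)$. For the imaginary critical values, observe that $f_\lambda(\mp ic_\lambda)=\pm i\cosh c_\lambda/(2c_\lambda)$ and that, by (\ref{g2}), the critical point $c_\lambda$ of $h_\lambda$ satisfies $c_\lambda>\sqrt\lambda$. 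If $\lambda_1<\lambda<\lambda_2$, then $r'_{\lambda,2}<c_\lambda<r_{\lambda,2}$ (from Remark \ref{rk1} and the $U$-shape of $-h_\lambda$ on $(\sqrt\lambda,\infty)$), so by Part (b) of Proposition \ref{t13} the $h_\lambda$-orbit of $c_\lambda$ accumulates on $\{a_{\lambda,2},-a_{\lambda,2}\}$; transporting through the conjugacy, the imaginary critical values lie in $A_2(ia_{\lambda,2})$. If $\lambda>\lambda_2$, then $r_\lambda<\sqrt\lambda<c_\lambda$ and $c_\lambda$ is not a preimage of $\pm\sqrt\lambda$, so by Part (d) of Proposition \ref{t13} we have $|h^n_\lambda(c_\lambda)|\to\infty$, i.e., the imaginary critical values escape (or land on a pole) and so lie in $\mathcal J(f_\lambda)$. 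Hence the only points of $S(f_\lambda)$ whose orbits converge to a periodic cycle are those attracted to $\{0\}$ and, when $\lambda_1<\lambda<\lambda_2$, those attracted to $\{ia_{\lambda,2},-ia_{\lambda,2}\}$; consequently $f_\lambda$ has no parabolic cycles, and its only attracting cycles are $\{0\}$ and (for $\lambda_1<\lambda<\lambda_2$) $\{ia_{\lambda,2},-ia_{\lambda,2}\}$. Therefore every periodic---and hence, by (iii), every---Fatou component lies in $A_1(0)\cup A_2(ia_{\lambda,2})$ for $\lambda\in(\lambda_1,\lambda_2)$, and in $A_1(0)$ for $\lambda>\lambda_2$, which is the reverse inclusion.

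The crux is item (iii): because $f_\lambda$ has infinitely many critical values, the classical finite-singular-value no-wandering-domains theorems (Baker; Eremenko--Lyubich; Goldberg--Keen) do not apply directly. For $\lambda\in(\lambda_1,\lambda_2)$ I would circumvent this by proving first, straight from Propositions \ref{t2} and \ref{t13}, that $\mathcal P(f_\lambda)$ is a compact subset of $\mathcal F(f_\lambda)$---it is bounded since every singular orbit is bounded (the real ones converge to $0$, the two imaginary ones to the attracting $2$-cycle) and it is contained in $\mathbb R\cup i\mathbb R\subseteq A_1(0)\cup A_2(ia_{\lambda,2})$---so that $f_\lambda$ is topologically hyperbolic (this is Remark \ref{r3}, provable already at this point), and a topologically hyperbolic function in $\mathcal B$ has no wandering domains. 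For $\lambda>\lambda_2$ this route fails, since $\mathcal P(f_\lambda)$ is now unbounded (the imaginary critical orbits escape), and one must instead invoke a no-wandering-domains result for class $\mathcal B$ that tolerates finitely many escaping singular orbits; this applies here because every singular orbit other than the two escaping imaginary ones converges to $0$, and one further notes that any Fatou component meeting $\mathbb R$ lies in $A_1(0)$ and any Fatou component meeting $i\mathbb R$ lies in $A_1(0)$ as well (every other point of $i\mathbb R$ is in the Julia set, by Part (d) of Proposition \ref{t13} and because the poles $\pm i\sqrt\lambda$ lie on the imaginary axis). Pinning down the exact form of this last no-wandering-domains statement, and the small topological observation needed for the rotation-domain exclusion in (ii), are the two points I expect to require the most care.
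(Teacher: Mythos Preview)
Your global strategy coincides with the paper's: locate every singular orbit via Propositions~\ref{t2} and~\ref{t13}, then invoke the standard dictionary between singular values and periodic Fatou components (Theorem~7 of \cite{Ber93a}) to conclude that no attracting or parabolic basins exist beyond the listed ones, and finally rule out rotation domains, Baker domains and wandering domains. The paper handles the last two simultaneously through Theorems~3 and~4 of Zheng \cite{zheng2003}, rather than via the escaping-set-in-$\mathcal J$ argument and topological hyperbolicity that you use; for $\lambda_1<\lambda<\lambda_2$ your route is equivalent and self-contained.

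The genuine gap is precisely where you flag it: the wandering-domain exclusion for $\lambda>\lambda_2$. Your appeal to an unspecified ``result for class $\mathcal B$ that tolerates finitely many escaping singular orbits'' is not a proof, and no such off-the-shelf theorem is needed. What the paper verifies is Zheng's hypothesis $(\mathcal P(f_\lambda))'\cap J^\lambda_\infty\setminus\{\infty\}=\emptyset$; equivalently, one observes directly that the only \emph{finite} accumulation point of $\mathcal P(f_\lambda)$ is $0$ (the real critical values and all real orbits accumulate only at $0$, while the two imaginary critical orbits escape to $\infty$), and $0\in A_1(0)\subset\mathcal F(f_\lambda)$. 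Since every constant limit function on a wandering domain of a map in $\mathcal B$ takes a finite value lying in $(\mathcal P(f_\lambda))'\cap\mathcal J(f_\lambda)$, and this intersection is empty here, wandering domains are impossible. The missing ingredient is thus the concrete computation $(\mathcal P(f_\lambda))'\setminus\{\infty\}=\{0\}$, not a new black-box theorem.

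One smaller issue in your item (ii): the assertion that a set lying in two lines ``cannot contain a boundary component of a rotation domain'' is false as stated (an open quadrant is simply connected with boundary in the two axes). The correct finish is that $\partial D\subset\overline{\mathcal P(f_\lambda)}\cap\mathcal J(f_\lambda)$, which for $\lambda_1<\lambda<\lambda_2$ is empty (topological hyperbolicity) and for $\lambda>\lambda_2$ is the countable set consisting of $\infty$ together with the escaping imaginary critical orbits; either way it cannot contain the boundary of a simply connected proper subdomain of $\widehat{\mathbb C}$, which is a nondegenerate continuum. The paper is equally terse here, citing only $\mathbb R\subset A_1(0)$ and Theorem~7 of \cite{Ber93a}.
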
	
\begin{proof}
Using Proposition \ref{p2}, the set of singularities of $f^{-1}_{\lambda}$ is contained either on the real line or the imaginary axis and is bounded, that means, $f_{\lambda} \in \mathcal{B}$. Also, the function $f_{\lambda}$ has only two critical values on the imaginary axis. Using Proposition \ref{t2}, the real line is contained in the basin of attraction of $0$,  $A_1(0)$ for $\lambda>1$. Using Proposition \ref{t9}, $f_{\lambda}$ has an attractive $2$-periodic point $ia_{\lambda,2}$ and therefore, the Fatou set $\mathcal{F}(f_{\lambda})$ contains the basin of attraction of $2$-periodic point $ia_{\lambda,2}$,  $A_2(ia_{\lambda,2})$. Also, by Proposition \ref{t13}, all the imaginary critical values of $f_{\lambda}$ are contained in $A_2(ia_{\lambda,2})$. 
Therefore,
$$\mathcal{P}(f_{\lambda}) \subset A_1(0) \cup A_2(ia_{\lambda,2}).$$ Consequently, $ (\mathcal{P}(f_{\lambda}))' \cap J^{\lambda}_{\infty} \setminus \{\infty\}= \emptyset$ for $\lambda \in (\lambda_1, \lambda_2)$. Further, by Part (d) of Proposition \ref{t13}, $\mathcal{J}(f_{\lambda})\cap (\mathcal{P}(f_{\lambda}))'= \{\infty\}$ and $(\mathcal{P}(f_{\lambda}))' \cap J^{\lambda}_{\infty} \setminus \{\infty\}= \emptyset$ for $\lambda> \lambda_2$, where ${J}^{\lambda}_{\infty}= \bigcup \limits_{n=0}^{\infty} f^{-n}_{\lambda}(\infty)$ and $(\mathcal{P}(f_{\lambda}))'$ is the set of accumulation points of  $\mathcal{P}(f_{\lambda})$. Therefore, by Theorem 3 and Theorem 4 of \cite{zheng2003}, $f_{\lambda}$ has no Baker domains and wandering domains  for $\lambda \in ( \lambda_{1},\lambda_2) \cup (\lambda_2, \infty)$.   
\par By Theorem 7 of \cite{Ber93a}, the Fatou set $\mathcal{F}(f_{\lambda})$ does not contain Siegel discs and Herman rings for $\lambda >1$, because $\mathbb{R} \subset A_1(0)$.\\ 
(a) Note that $${S}(f_{\lambda}) \subset A_1(0) \cup A_2(ia_{\lambda,2})$$ for $\lambda \in (\lambda_1,\lambda_2)$. By Theorem 7 of \cite{Ber93a}, the Fatou set $\mathcal{F}(f_{\lambda})$ does not contain parabolic domains. Similarly, $\mathcal{F}(f_{\lambda})$ does not contain any basin of attraction other than $A_1(0)$ and $A_2(ia_{\lambda,2})$. Hence, the result holds.\\
\begin{figure}[h!]
\centering
\includegraphics[width=12cm, height=8cm]{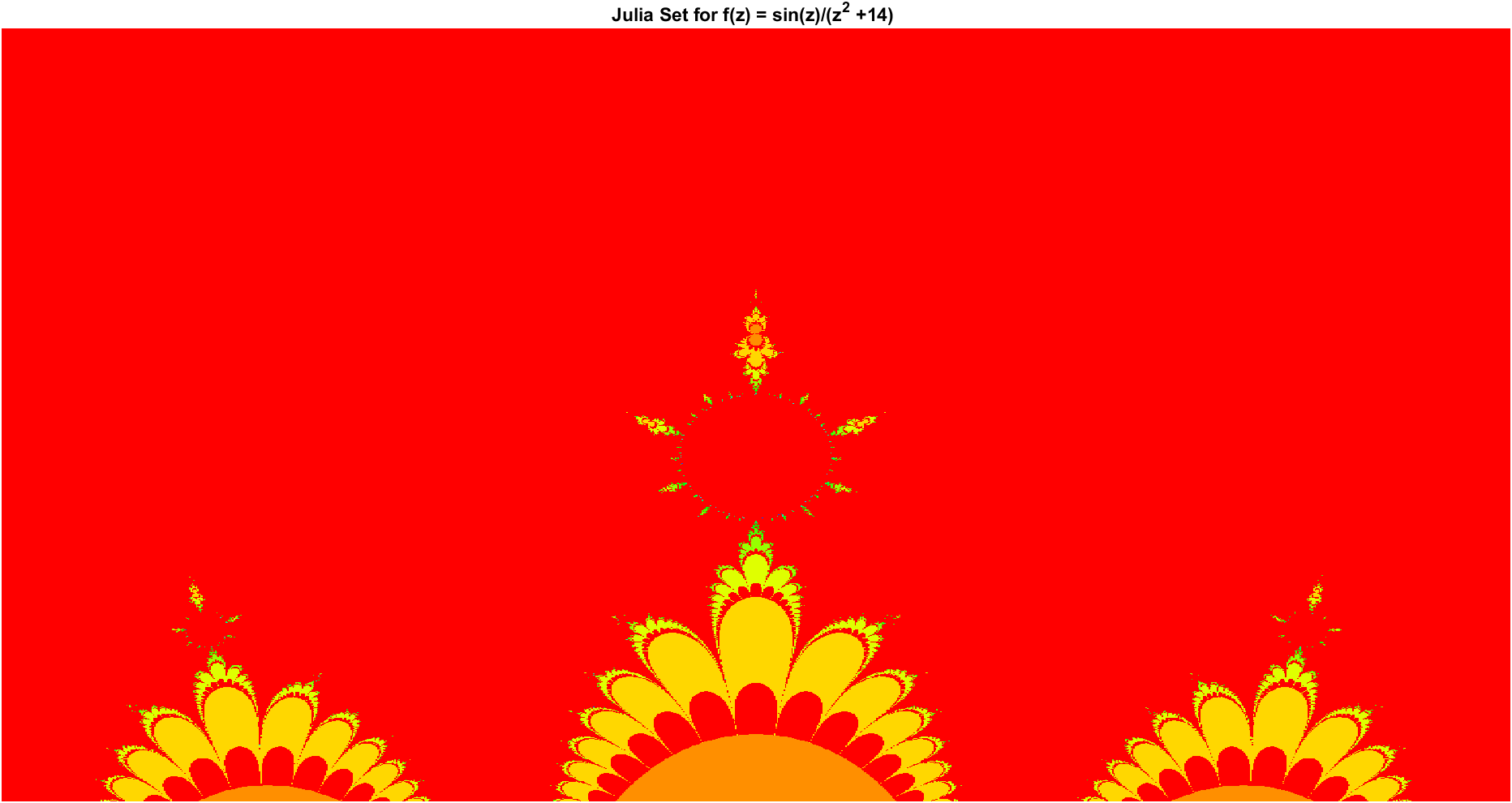}
\caption{{Red colour region} represents the Fatou set of $f_{\lambda}$ for $\lambda_{1}< \lambda< \lambda_{2}$. Here $-1.5 \pi <x< 1.5 \pi$ and $-2 \pi< y <0$.}
\label{J1}
\end{figure}
\begin{figure}[h!]
\centering
\includegraphics[width=12cm, height=8cm]{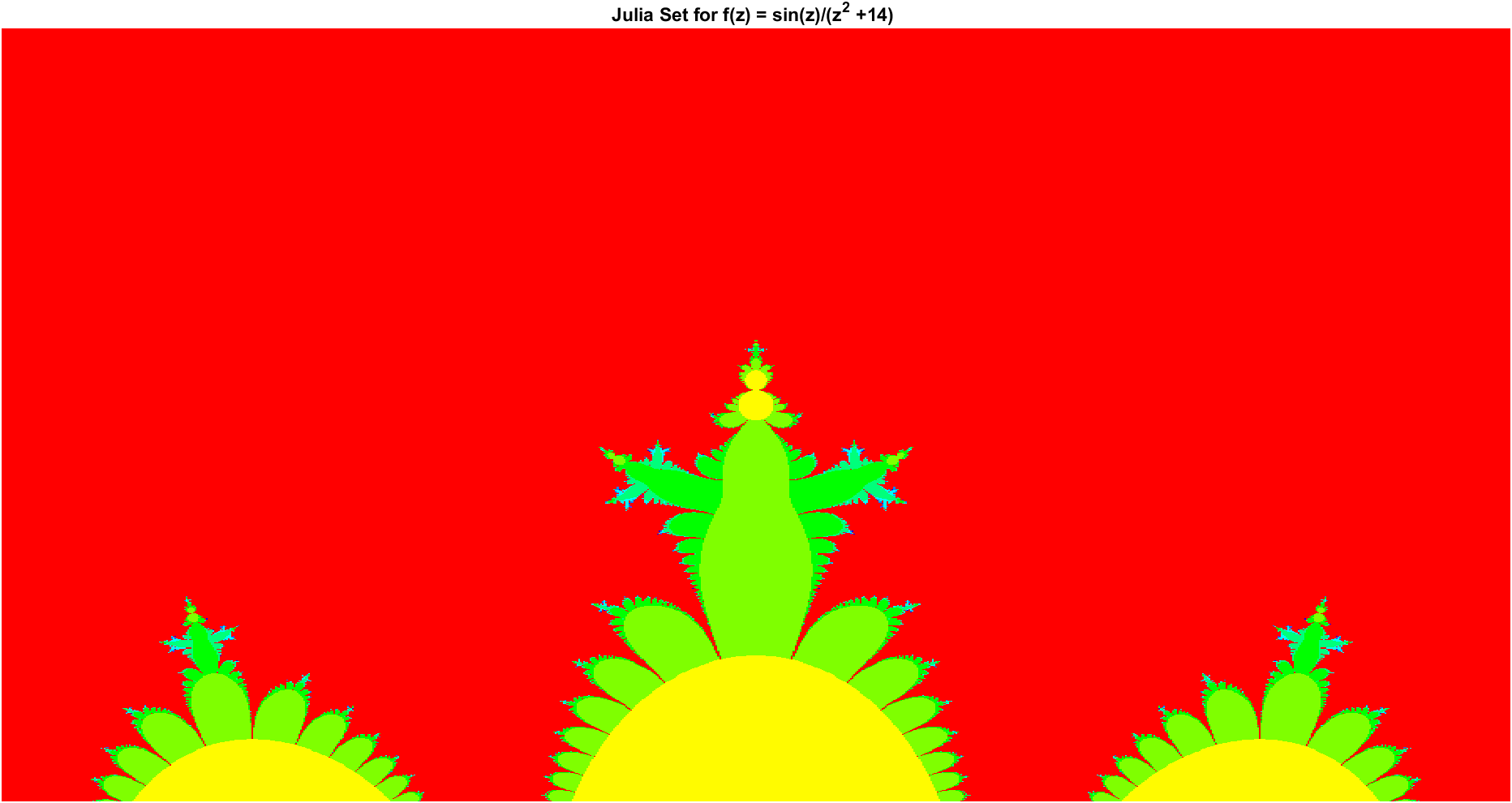}
\caption{ Red color region represents the Fatou set of  $f_{\lambda}$ for $\lambda> \lambda_{2}$. Here $-1.5 \pi <x< 1.5 \pi$ and $-2 \pi< y <0$.}
\label{J2}
\end{figure} 
(b) By Part (b) of Proposition \ref{t13}, the orbit of imaginary critical values of $f_{\lambda}$ tends to $\infty$ for $\lambda> \lambda_2$. Therefore, by Theorem 7 of \cite{Ber93a}, the Fatou set $\mathcal{F}(f_{\lambda})$ does not contain parabolic domains. Similarly, $\mathcal{F}(f_{\lambda})$ does not contain any basin of attraction other than $A_1(0)$. Hence, the result holds. 
\end{proof} 
By Proposition \ref{t1}, the Fatou set $\mathcal{F}(f_{\lambda})$ is symmetric with respect to the real and imaginary axes. Figures \ref{J1} and \ref{J2} illustrate the basins of attraction of $f_{\lambda}$ when $\lambda \in (\lambda_{1},\infty) \setminus \{\lambda_{2}\}$. 
\begin{rem}\label{r3}
\begin{enumerate}
 \item By Part (a) of Theorem \ref{t17}, the function $f_{\lambda}$ is topologically hyperbolic for ${\lambda_{1}} <\lambda < \lambda_{2}$. 
\item For $\lambda \in \{\lambda_1, \lambda_2\}$:\\
Note that the set $\mathcal{P}(f_{\lambda})$ is bounded since all the real singularities of $f^{-1}_{\lambda}$ are contained in the basin of attraction of $0$ of $f_{\lambda}$ and the remaining imaginary singularities of $f^{-1}_{\lambda}$ are contained in parabolic domains corresponding to the rationally indifferent $2$-periodic point $i y_{\lambda}$. Therefore, by Corollary 1 of \cite{zheng2003},  $f_{\lambda}$ has no Baker domains. Further, by the parallel arguments of proof of Theorem \ref{t17}, $\mathcal{F}(f_{\lambda})$ does not contain any other Fatou components except $A_1(0)$, parabolic domains of $i y_{\lambda}$ and their premiages, and wandering domains.
\end{enumerate}
  Using the similar approach of \cite{gpt}, in the following theorem, it is shown that  $\mathcal{J}(f_{\lambda})= clos({I(f_{\lambda})})$ for $\lambda \in (\lambda_1, \infty) \setminus \{\lambda_2\}$, where $I(f_{\lambda}):= \{ z \in \mathbb{C}: f_{\lambda}^{n}(z) \to \infty \;  \text{as} \; n \to \infty \; \text{but} \;f_{\lambda}^{n}(z)\neq \infty \; \text{for any} \; n \}$. 
  \begin{thm}\label{t21}\textbf{(Characterization of Julia set)}
    Let $f_{\lambda} \in \mathbb{S}$. For $\lambda \in (\lambda_1, \infty) \setminus \{\lambda_2\}$, $\mathcal{J}(f_{\lambda})= clos({I(f_{\lambda})})$. 
  \end{thm}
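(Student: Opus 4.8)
The plan is to prove the two inclusions $clos(I(f_{\lambda})) \subseteq \mathcal{J}(f_{\lambda})$ and $\mathcal{J}(f_{\lambda}) \subseteq clos(I(f_{\lambda}))$ separately: the first is a short consequence of Theorem \ref{t17}, while the second is obtained by a localization argument at prepoles together with Proposition \ref{t13}.

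For $clos(I(f_{\lambda})) \subseteq \mathcal{J}(f_{\lambda})$, first I would observe that $I(f_{\lambda}) \cap \mathcal{F}(f_{\lambda}) = \emptyset$. Indeed, by Theorem \ref{t17}, for $\lambda \in (\lambda_1,\lambda_2)$ every point of $\mathcal{F}(f_{\lambda})$ lies in $A_1(0)$ or in $A_2(ia_{\lambda,2})$, and for $\lambda > \lambda_2$ it lies in $A_1(0)$; in all cases the forward orbit of a Fatou point converges to a finite attracting cycle and hence cannot tend to $\infty$. Therefore $I(f_{\lambda}) \subseteq \mathcal{J}(f_{\lambda})$, and since $\mathcal{J}(f_{\lambda})$ is closed, $clos(I(f_{\lambda})) \subseteq \mathcal{J}(f_{\lambda})$.

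For the reverse inclusion I would use two ingredients. First, since $f_{\lambda}$ has two non-omitted poles, it is well known (see \cite{Ber93a}) that $\mathcal{J}(f_{\lambda}) = clos\big(\bigcup_{n\geq 1} f_{\lambda}^{-n}(\infty)\big)$, i.e. the set of prepoles is dense in $\mathcal{J}(f_{\lambda})$; hence it suffices to show that every prepole belongs to $clos(I(f_{\lambda}))$. Second, by Proposition \ref{t13}(b) when $\lambda_1 < \lambda < \lambda_2$ and by Proposition \ref{t13}(d) when $\lambda > \lambda_2$, the tail $\{ iy : y > R\}$ of the imaginary axis meets $I(f_{\lambda})$ for every $R > 0$, so in particular $\{ |z| > R\} \cap I(f_{\lambda}) \neq \emptyset$ for all $R$. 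Now fix a prepole $q$ with $f_{\lambda}^{k}(q) = \infty$ and $k \geq 1$ minimal, and a disc $\mathbb{D}(q,\varepsilon)$. For $\varepsilon$ small enough $f_{\lambda}^{k}$ is meromorphic on $\mathbb{D}(q,\varepsilon)$ with its only pole at $q$, so $f_{\lambda}^{k}(\mathbb{D}(q,\varepsilon)) \supseteq \{ |w| > R\}$ for some $R = R(\varepsilon)$, while $f_{\lambda}^{j}(\mathbb{D}(q,\varepsilon))$ stays in a bounded subset of $\mathbb{C}$ for $0 \leq j < k$ because $f_{\lambda}^{j}(q) \neq \infty$. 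Choosing $w_0 \in \{ |w| > R\} \cap I(f_{\lambda})$ and $z_1 \in \mathbb{D}(q,\varepsilon)$ with $f_{\lambda}^{k}(z_1) = w_0$, the iterates $f_{\lambda}^{j}(z_1)$ are finite for $0 \leq j \leq k$ and $f_{\lambda}^{k+m}(z_1) = f_{\lambda}^{m}(w_0) \to \infty$ with $f_{\lambda}^{m}(w_0) \neq \infty$ for all $m$; hence $z_1 \in I(f_{\lambda}) \cap \mathbb{D}(q,\varepsilon)$. Letting $\varepsilon \to 0$ shows $q \in clos(I(f_{\lambda}))$, and since prepoles are dense in $\mathcal{J}(f_{\lambda})$ this yields $\mathcal{J}(f_{\lambda}) \subseteq clos(I(f_{\lambda}))$.

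The hard part will be the clean execution of the localization at a prepole: verifying that $f_{\lambda}^{k}$ really covers a full neighbourhood of $\infty$ on a small punctured disc about $q$ (behaviour of a meromorphic map near a pole) and, using the minimality of $k$ and a further shrinking of $\varepsilon$, making sure the pulled-back point $z_1$ is a genuine escaping point, i.e. that no iterate of $z_1$ equals $\infty$. One should also take care that the identity $\mathcal{J}(f_{\lambda}) = clos\big(\bigcup_{n} f_{\lambda}^{-n}(\infty)\big)$ is applied with the correct hypotheses (at least two poles, none omitted), which hold here by the analysis of Section \ref{2.0}. Everything else reduces to Theorem \ref{t17} and Proposition \ref{t13}.
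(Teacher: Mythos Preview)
Your proof is correct. For the inclusion $clos(I(f_{\lambda})) \subseteq \mathcal{J}(f_{\lambda})$ you and the paper argue identically via Theorem~\ref{t17}. For the reverse inclusion, however, the paper takes a more direct route: instead of first reducing to prepoles and then analysing the local behaviour of $f_{\lambda}^{k}$ near a pole, it applies the blow-up property of the Julia set (Montel) at an \emph{arbitrary} point $z \in \mathcal{J}(f_{\lambda})$. Since $\bigcup_{n\geq 1} f_{\lambda}^{n}(U)$ omits at most two points for any neighbourhood $U$ of $z$, it must contain some $i\tilde{x}_{\lambda}$ on the imaginary axis with $\tilde{x}_{\lambda}$ large enough (beyond $r_{\lambda}$ or $r_{\lambda,2}$) that Proposition~\ref{t13} forces its orbit to $\infty$; pulling back gives an escaping point in $U$. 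Your two-step approach---density of prepoles from \cite{Ber93a}, then the open-mapping behaviour of $f_{\lambda}^{k}$ at a pole---reaches the same conclusion and is entirely valid, but it requires exactly the bookkeeping you flag at the end (shrinking $\varepsilon$ so that $q$ is the only pole of $f_{\lambda}^{k}$ on the disc, and checking the pulled-back point is not itself a prepole). The paper's Montel argument sidesteps that localisation work at the cost of invoking a slightly stronger normality principle; both approaches ultimately hinge on Proposition~\ref{t13} supplying escaping points on the imaginary axis.
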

  \begin{proof}
     Suppose $z \in clos({I(f_{\lambda}) })$  but $z \notin \mathcal{J}(f_{\lambda})$, then for a given neighbourhood $U$ of $z$ there exists $z_1 \in U$ such that $f_{\lambda}^n(z_1) \to \infty$ as $n \to \infty$. By Theorem \ref{t17}, $z$ belongs to either the basin of attraction of $0$ or the basin of attraction of $i a_{\lambda, 2}$ (2-periodic point) depends upon $\lambda$. Therefore, it contradicts our supposition. Hence, $clos({I(f_{\lambda})}) \subseteq \mathcal{J}(f_{\lambda})$.\\
     Suppose $z \in \mathcal{J}(f_{\lambda})$, then for a given neighbourhood of $z$, say $U$, $\bigcup \limits_{n=1}^{\infty} f_{\lambda}^{n}(U)$ omit at most two points in $\mathbb{C}$. In particular, there exists a point $i \tilde{x}_{\lambda} 
 (\tilde{x}_{\lambda}> r_{\lambda} \; (\text{or} \; r_{\lambda,2}))$ such that $i \tilde{x}_{\lambda} \in \bigcup \limits_{n=1}^{\infty} f_{\lambda}^{n}(U)$, where $r_{\lambda} \; \text{and} \; r_{\lambda,2}$ (depends on $\lambda$) are the repulsive fixed point of $f_{\lambda}$. Therefore, there exists a point $\tilde{z} \in U$ such that $f_{\lambda}^j(\tilde{z})= i\tilde{x_{\lambda}}$ for some positive integer $j$. Now, by Proposition \ref{t13}, it follows that $f_{\lambda}^n (i\tilde{x}_{\lambda}) \to \infty$ as $n \to \infty$, since $\tilde{x}_{\lambda}> r_{\lambda}$ (or $r_{\lambda,2}$). Thus $f_{\lambda}^n(\tilde{z}) \to \infty$ as $n \to \infty$ and hence, $z \in clos({I(f_{\lambda})})$. Consequently, $\mathcal{J}(f_{\lambda}) \subseteq clos({I(f_{\lambda})})$.     
  \end{proof}
\end{rem}
Using Proposition \ref{t1} and following the analogous arguments of proof of Theorem \ref{t17},  the following theorem describes the Fatou components of $f_{\lambda}$ for $\lambda> \lambda^*$. 
\begin{thm}\label{t20}
Let $f_{\lambda} \in \mathbb{S}$.
    \begin{enumerate}
    \item For $\lambda =\lambda^*$, the Fatou set $\mathcal{F}(f_{\lambda})$ contains the union of parabolic domains of rationally indifferent fixed points $-x^*$ and $x^*$ and their preimages, where $x^*$ is defined in Proposition \ref{t1}.
    \item For $\lambda^*< \lambda <1 $, $A_1(-x_\lambda)\cup A_1(x_\lambda)  \subseteq \mathcal{F}(f_{\lambda})$, where $x_{\lambda}$ is defined in Proposition \ref{t1}.
    \item For $\lambda =1$, $\mathcal{F}(f_{\lambda})$ contains the union of parabolic domains of rationally indifferent fixed point $0$ of $f_{\lambda}$ and their preimages.
    \item For $\lambda >1$, $A_1(0) \subseteq \mathcal{F}(f_{\lambda})$.
    \item For $\lambda \geq \lambda^*$, $\mathcal{F}(f_{\lambda})$ does not contain Siegel disks and Herman rings.
    \end{enumerate}
\end{thm}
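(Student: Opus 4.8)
The plan is to run the five items through a single template, splitting into the same cases as in Proposition~\ref{t1}. In each case one exhibits an attracting or rationally indifferent periodic point of $f_\lambda$ (or of $f_\lambda^2$, when the relevant multiplier equals $-1$), recalls the classical fact that such a point carries an immediate attracting basin, respectively a Leau--Fatou flower of attracting petals, inside the Fatou set (see \cite{Milnor2006}), and then uses the complete invariance of $\mathcal{F}(f_\lambda)$ to pass from these to the full basins and all of their preimages, which are again contained in $\mathcal{F}(f_\lambda)$. This yields the asserted inclusions at once. Only item (5), the exclusion of rotation domains, falls outside this template, and I would treat it exactly as in the proof of Theorem~\ref{t17}.

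In detail: for $\lambda>1$, Proposition~\ref{t1}(e) says $0$ is an attracting fixed point, so its immediate basin is a Fatou component and the full basin $A_1(0)=\bigcup_{n\ge 0}f_\lambda^{-n}(A_1^{*}(0))$ is an open subset of $\mathcal{F}(f_\lambda)$; moreover Proposition~\ref{t2}(a)--(b) places $\mathbb{R}$ inside $A_1(0)$. This is item (4). For $\lambda^{*}<\lambda<1$, Proposition~\ref{t1}(c) provides the attracting fixed points $\pm x_\lambda$, and the same reasoning gives $A_1(-x_\lambda)\cup A_1(x_\lambda)\subseteq\mathcal{F}(f_\lambda)$, with $\mathbb{R}\setminus\{n\pi:n\in\mathbb{Z}\}$ contained in this union by Proposition~\ref{t2}(c); this is item (2). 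For $\lambda=1$, Proposition~\ref{t1}(d) gives $f_\lambda'(0)=1$, so $0$ is parabolic and the Leau--Fatou flower theorem produces attracting petals at $0$ on which the iterates of $f_\lambda$ converge locally uniformly to $0$; these petals, together with all of their $f_\lambda$-preimages, lie in $\mathcal{F}(f_\lambda)$, giving item (3). For $\lambda=\lambda^{*}$, Proposition~\ref{t1}(b) gives $f_{\lambda^{*}}'(x^{*})=-1$, so $\pm x^{*}$ are rationally indifferent (with $(f_{\lambda^{*}}^{2})'(x^{*})=1$); the flower theorem again yields parabolic domains attached to $\pm x^{*}$ inside $\mathcal{F}(f_\lambda)$, and their preimages are Fatou components too, which is item (1).

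For item (5) I would argue as in Theorem~\ref{t17}. By Theorem~7 of \cite{Ber93a}, the boundary of any periodic Siegel disc or Herman ring of $f_\lambda$ lies in the closure of the union of the forward orbits of the singularities of $f_\lambda^{-1}$. On the other hand $\mathbb{R}$ is forward invariant under $f_\lambda$, and every periodic point of $f_\lambda$ on $\mathbb{R}$ has a real multiplier, so $f_\lambda$ has no irrationally indifferent periodic point on $\mathbb{R}$; hence a rotation domain cannot meet $\mathbb{R}$, since otherwise the orbit of a real point of it would be dense in an invariant Jordan curve forced to lie in $\mathbb{R}$. Combining this with the fact, established in (1)--(4), that $\mathbb{R}$ minus a discrete set already lies in the real attracting basins (for $\lambda\neq\lambda^{*},1$) or parabolic domains (for $\lambda\in\{\lambda^{*},1\}$), together with the symmetry Proposition~\ref{p1}, one rules out Siegel discs and Herman rings for every $\lambda\ge\lambda^{*}$.

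The routine content is (1)--(4); the step I expect to be the main obstacle is (5). This is also the reason the theorem asserts only inclusions rather than the equality proved in Theorem~\ref{t17}: for $\lambda\in(\lambda^{*},1)$ the forward orbits of the two imaginary critical values $\pm i\,\cosh c_\lambda/(2c_\lambda)$ are not controlled (Proposition~\ref{t13} applies only for $\lambda>1$), so the full postsingular set is not located and one must rely on the invariance of $\mathbb{R}$ rather than on a postsingular-set estimate. The case needing the most care inside (5) is that of Herman rings, which are doubly connected and separate the sphere; here I would use that $\mathbb{R}\cup\{\infty\}$ is connected, that $\infty\in\mathcal{J}(f_\lambda)$, and Proposition~\ref{p1} to preclude a Herman ring lying inside one of the open half-planes.
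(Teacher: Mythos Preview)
Your proposal is essentially the same approach as the paper's. The paper gives no separate proof of this theorem: it simply says ``Using Proposition~\ref{t1} and following the analogous arguments of proof of Theorem~\ref{t17}'', which is exactly your template---Proposition~\ref{t1} supplies the nature of the real fixed points in each parameter range, standard theory places the corresponding attracting basins or Leau--Fatou petals (and all their preimages) inside $\mathcal{F}(f_\lambda)$, and for item~(5) one invokes Theorem~7 of \cite{Ber93a} together with the fact that $\mathbb{R}$ (minus a discrete set) lies in these attracting or parabolic basins, just as in the proof of Theorem~\ref{t17}. You have in fact supplied more detail than the paper does, and you correctly single out item~(5) as the only non-routine step and correctly diagnose why the theorem states inclusions rather than equalities: the imaginary critical values are not controlled by Proposition~\ref{t13} when $\lambda<1$, so the full postsingular set cannot be located and one must lean on the forward invariance of $\mathbb{R}$ and $i\mathbb{R}$ instead.
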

\begin{rem}
    Using Proposition  \ref{t7} and Theorem 4.1 of \cite{Bernd}, $f_{\lambda}(x)$ shows sensitive dependence on $[-f_{\lambda}(p_{\lambda}), f_{\lambda}(p_{\lambda})]$ for $\lambda \in (0, \lambda^{**}]$. Therefore, $\mathbb{R} \subset \mathcal{J}(f_{\lambda})$  for $\lambda \in (0, \lambda^{**}]$ since $f_{\lambda}(\mathbb{R}) \subseteq [-f_{\lambda}(p_{\lambda}), f_{\lambda}(p_{\lambda})]$.
\end{rem}
The following theorems describe some topological aspects of the Fatou components of $f_{\lambda}$ for a certain range of $\lambda$ values. Given a non-empty set $A$, $-A:= \{ -z : z \in A\}$ and $\overline{A}:= \{ \bar{z} : z \in A\}$. 
\begin{thm}\label{t18}
 Let $f_{\lambda} \in \mathbb{S}$. For $\lambda> 1$, $\mathcal{F}(f_{\lambda})$ has a completely invariant attracting domain, $A_1(0)$. Further, the degree of $f_{\lambda}$ on $ A_1(0)$, $deg(f_{\lambda| A_1(0)})= \infty$.  
\end{thm}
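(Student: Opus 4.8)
The plan is to show that $A_1(0)$ is nothing but the single Fatou component $U_0$ containing the attracting fixed point $0$, to deduce complete invariance of $U_0$ from the fact that \emph{every} finite preimage of $0$ lies on the real axis, and then to read the degree off from the preimages of $0$.

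First I would fix $\lambda>1$ and let $U_0$ be the Fatou component of $f_\lambda$ containing $0$. By Proposition \ref{t1}, $0$ is an attracting fixed point, so $f_\lambda(U_0)\subseteq U_0$, since $f_\lambda(U_0)$ is a connected subset of $\mathcal F(f_\lambda)$ containing $f_\lambda(0)=0$. By Proposition \ref{t2}(a) we have $f_\lambda^n(x)\to 0$ for every $x\in\mathbb R$; because the orbit of a real point stays real while the poles $\pm i\sqrt\lambda$ are non-real, a routine normality argument (using that the iterates converge uniformly to $0$ on a neighbourhood of $0$) gives $\mathbb R\subseteq\mathcal F(f_\lambda)$, and then $\mathbb R\subseteq U_0$ since $\mathbb R$ is connected and contains $0$. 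In particular $U_0$ contains the full preimage $f_\lambda^{-1}(0)=\{n\pi:n\in\mathbb Z\}$, and, recalling that $AV(f_\lambda)=\{0\}$ with exactly two indirect singularities of $f_\lambda^{-1}$ over $0$, whose asymptotic paths are the two ends of $\mathbb R$.

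Next I would prove $f_\lambda^{-1}(U_0)\subseteq U_0$; together with $f_\lambda(U_0)\subseteq U_0$ this makes $U_0$ completely invariant, and since $A_1(0)=\bigcup_{n\ge 0}f_\lambda^{-n}(U_0)$ it follows that $A_1(0)=U_0$ is a single completely invariant attracting domain. Let $W$ be a component of $f_\lambda^{-1}(U_0)$; being connected and contained in $\mathcal F(f_\lambda)$, it lies in one Fatou component. By the standard proper-mapping dichotomy for meromorphic functions, $f_\lambda$ maps $W$ onto $U_0$ or onto $U_0\setminus\{0\}$, the only omitted value possible being the asymptotic value $0$. If $f_\lambda(W)=U_0$, then $0\in f_\lambda(W)$, so some $n\pi$ lies in $W$, whence $W$ meets $U_0$ and therefore $W\subseteq U_0$. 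If $f_\lambda(W)=U_0\setminus\{0\}$, then $W$ carries a transcendental singularity of $f_\lambda^{-1}$ over $0$; since there are exactly two of these and their asymptotic paths are tails of $\mathbb R$, the tract $W$ contains a tail of $\mathbb R\subseteq U_0$, so again $W\subseteq U_0$. In all cases $W\subseteq U_0$, giving $f_\lambda^{-1}(U_0)\subseteq U_0$.

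Finally, $0\in A_1(0)=U_0$ has infinitely many preimages $\{n\pi:n\in\mathbb Z\}$, all of which lie in $U_0$; hence $f_\lambda$ assumes the value $0$ infinitely often on $A_1(0)$, so $\deg(f_{\lambda| A_1(0)})=\infty$. I expect the delicate point to be the second case in the complete-invariance step: one must make sure that a component $W$ of $f_\lambda^{-1}(U_0)$ on which $0$ is omitted genuinely corresponds to one of the two transcendental singularities over $0$ — so that its asymptotic path is a tail of $\mathbb R$ and forces $W$ to meet $U_0$ — rather than producing a new Fatou component disjoint from $U_0$. The normality argument placing $\mathbb R$ inside $U_0$, the proper-mapping dichotomy $f_\lambda(W)\in\{U_0,\,U_0\setminus\{0\}\}$, and the degree count should all be routine.
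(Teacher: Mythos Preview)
Your overall structure is close to the paper's, and you have correctly located the one genuine difficulty. The gap is in the second case of your complete-invariance step: you assert that there are \emph{exactly} two transcendental singularities of $f_\lambda^{-1}$ over $0$, both with asymptotic tails in $\mathbb{R}$. But prior to this theorem the paper has only established that there are (at least) two \emph{indirect} singularities over $0$; it has not yet ruled out additional \emph{direct} singularities whose tracts lie off the real axis. If such a tract existed, it would furnish a component $W$ of $f_\lambda^{-1}(U_0)$ with $f_\lambda(W)=U_0\setminus\{0\}$ and $W\cap\mathbb{R}=\emptyset$, and nothing in your argument would force $W\subseteq U_0$.

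The paper closes exactly this gap by a different device. It works with a small disc $\mathbb{D}(0,r)\subset U_0$ and observes that any component $U_r$ of $f_\lambda^{-1}(\mathbb{D}(0,r))$ disjoint from $\mathbb{R}$ would contain no zero of $f_\lambda$ (all zeros being real) and hence would be a tract for a \emph{direct} singularity over $0$. The symmetries $z\mapsto\bar z$ and $z\mapsto -z$ then manufacture at least three pairwise distinct such tracts, contradicting the Denjoy--Carleman--Ahlfors theorem, since $f_\lambda$ has order $1$ and therefore at most two direct singularities. This is the missing ingredient in your proposal; once you have it, either your Fatou-component formulation (via the Herring--Bolsch dichotomy) or the paper's tract formulation finishes the proof. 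Note that the statement ``$f_\lambda^{-1}$ has exactly two indirect singularities over $0$ and no direct ones'' appears in the paper only \emph{after} this theorem, in Remark~\ref{rem9}, precisely because it is a consequence of the DCA argument just described, not an input to it.
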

\begin{proof}
It is easy to see that the set of preimages of zero $f^{-1}_{\lambda}(0)= \{n\pi : n \in \mathbb{Z}\}$ is infinite and so, $deg(f_{\lambda| A_1(0)})= \infty$. Using  Proposition \ref{t2},  $ \mathbb{R} \subset A_1(0)$ for $\lambda> 1$. Therefore, there exists a component $U$ of $A_{1}(0)$ such that $\mathbb{R} \subset U \subseteq A_{1}(0)$. Now, consider $\mathbb{D}(0,r) \subset U$ for some $r>0$ and $U_r$ as a component of $f^{-1}_{\lambda}(\mathbb{D}(0,r))$. If $U_r \cap \mathbb{R} \neq \emptyset$, then $U_r \subseteq U$. 
\par Suppose $U_r \cap \mathbb{R} = \emptyset$. Using Proposition \ref{p2}, $U_r$ contains an asymptotic curve and thus, there is a direct singularity of $f_{\lambda}^{-1}$ over $0$. Since $f_{\lambda}(-z) = f_{\lambda}(z)$ and $f_{\lambda}(\bar{z}) = \overline{f_{\lambda}(z)}$, $\overline{U_r}$ and $-U_r$ are the components of $f^{-1}_{\lambda}(\mathbb{D}(0,r))$ such that $\overline{U_r} \cap \mathbb{R} = \emptyset$ and $-{U_r} \cap \mathbb{R} = \emptyset$. Thus, there are at least three direct singularities of $f^{-1}_{\lambda}$ over $0$.  Since the order of $f_{\lambda}$ is one, by Denjoy-Carleman-Ahlfors Theorem of \cite{nev} there are at most two direct singularities of $f^{-1}_{\lambda}$. Therefore, it gives a contradiction. Hence, $U_r \cap \mathbb{R} \neq \emptyset$ for every component of $f^{-1}_{\lambda}(\mathbb{D}(0,r))$. Consequently, $f^{-1}_{\lambda}(U) \subseteq U$ and the result holds.
\end{proof}
\begin{rem}\label{rem9}
1.    By Lemma 2.2 of \cite{Cao}, $\partial A_1(0)= \mathcal{J}(f_{\lambda})$ for $\lambda>1$.\\
2. The inverse of $f_{\lambda}$ has exactly two indirect singularities over $0$ but no direct singularities.
\end{rem}
Consider $\Phi(y)= \frac{\sinh{y}}{y} +y^2$ for $y \in (0,\infty)$. Now, we have $\Phi'(y)= \frac{2y^3 -\sinh y+y\cosh{y}}{y^2}$. Let  $N(y)$  represent the numerator of the function $\Phi'(y)$. Then $N'(y)= y(6y+\sinh (y))>0$ and thus, $N(y)$ is strictly increasing on $(0,\infty)$. Note that $N(0)=0$. Therefore, $\Phi'(y)>0$ for $y \in (0, \infty)$. Hence, $\Phi(y)$ is strictly increasing and $\Phi(y)>1$ for $y \in (0, \infty)$. 
\begin{thm}\label{tt19}
 Let $f_{\lambda} \in \mathbb{S}$. For $\lambda> {1}$, there is a disc
  $$ \mathbb{D}(0,r_{\lambda}) \subset A_1(0),$$
   where $ir_{\lambda}$ is the repulsive fixed point of $f_{\lambda}$. Further, if $\lambda> \sinh({1}) +1$, then all the real singularities of $f^{-1}_{\lambda}$ are contained in the disc $\mathbb{D}(0,r_{\lambda})$.
\end{thm}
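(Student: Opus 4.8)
The plan is to treat the two assertions in turn. Throughout, $r_\lambda>0$ denotes the non-zero repulsive fixed point of $h_\lambda(y)=\sinh y/(\lambda-y^2)$ furnished by Proposition \ref{t8}(c) for $\lambda>1$; note $0<r_\lambda<\sqrt\lambda$, since $h_\lambda(y)<0<y$ on $(\sqrt\lambda,\infty)$, so $f_\lambda$ is holomorphic on a neighbourhood of $\overline{\mathbb{D}(0,r_\lambda)}$ (its poles being $\pm i\sqrt\lambda$).

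First I would prove $\mathbb{D}(0,r_\lambda)\subset A_1(0)$ by dominating $f_\lambda$ on this disc by the real map $h_\lambda$. From the power series of $\sin$ one has $|\sin z|\le\sinh|z|$, and the reverse triangle inequality gives $|z^2+\lambda|\ge\lambda-|z|^2>0$ for $|z|<\sqrt\lambda$; hence
\[
|f_\lambda(z)|\ \le\ \frac{\sinh|z|}{\lambda-|z|^2}\ =\ h_\lambda(|z|)\qquad\text{for }0<|z|<r_\lambda .
\]
By the reasoning in the proof of Lemma \ref{l2} (valid for all $\lambda>1$, as the opening line of the proof of Proposition \ref{t13} records) we have $h_\lambda(y)<y$ on $(0,r_\lambda)$ and $h_\lambda^{\,n}(y)\to0$ there, while $h_\lambda$ is strictly increasing on $(0,\sqrt\lambda)$ by (\ref{g2}). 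Consequently $\mathbb{D}(0,r_\lambda)$ is forward invariant under $f_\lambda$ (so the orbit never hits a pole), and an induction on $n$ gives $|f_\lambda^{\,n}(z)|\le h_\lambda^{\,n}(|z|)\to0$; thus every point of $\mathbb{D}(0,r_\lambda)$ belongs to $A_1(0)$.

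For the second assertion, recall from Proposition \ref{p2} that the singularities of $f_\lambda^{-1}$ on the real axis are the asymptotic value $0$ together with the critical values; since $f'_\lambda(p_{\lambda,n})=0$ forces $f_\lambda(p_{\lambda,n})=\cos p_{\lambda,n}/(2p_{\lambda,n})$, each such value has modulus $|\sin p_{\lambda,n}|/(p_{\lambda,n}^2+\lambda)\le 1/\lambda$. Hence it suffices to show $1/\lambda<r_\lambda$ when $\lambda>\sinh 1+1$. The fixed-point equation $h_\lambda(r_\lambda)=r_\lambda$ rearranges to $\lambda=\Phi(r_\lambda)$, where $\Phi(y)=\sinh y/y+y^2$ is the strictly increasing function introduced just before the theorem; since $\Phi(1)=\sinh 1+1$, the hypothesis $\lambda>\sinh 1+1$ gives $r_\lambda>1$, so $1/\lambda<1<r_\lambda$ and all real singularities of $f_\lambda^{-1}$ lie in $\mathbb{D}(0,r_\lambda)$.

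No step is deep; the only point requiring a little care is the inductive comparison $|f_\lambda^{\,n}(z)|\le h_\lambda^{\,n}(|z|)$, where at each stage one must check that the modulus has remained in $(0,r_\lambda)$, so that both elementary majorizations stay valid and $h_\lambda$ may legitimately be iterated — this is exactly what $h_\lambda(y)<y$ on $(0,r_\lambda)$ together with the monotonicity of $h_\lambda$ there guarantees.
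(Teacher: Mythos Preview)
Your proof is correct, and for the first assertion it takes a cleaner route than the paper. The paper establishes forward invariance of $\mathbb{D}(0,r_\lambda)$ via the Maximum Modulus Principle: it parametrizes the boundary circle as $z=r_\lambda e^{i\theta}$, substitutes $\lambda=\Phi(r_\lambda)$, and carries out a somewhat involved $\theta$-analysis of the numerator and denominator of $|f_\lambda(r_\lambda e^{i\theta})/r_\lambda|^2$ to show this ratio is at most $1$. Your pointwise domination $|f_\lambda(z)|\le h_\lambda(|z|)$, obtained from $|\sin z|\le\sinh|z|$ and $|z^2+\lambda|\ge\lambda-|z|^2$, sidesteps that boundary computation entirely and in fact yields more: not just forward invariance but the actual convergence $f_\lambda^{\,n}(z)\to0$, which the paper leaves implicit (forward invariance alone only places the disc in the Fatou set, and one must then observe it lies in the component of the attracting fixed point $0$). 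For the second assertion the two arguments are essentially the same, both using $\lambda=\Phi(r_\lambda)$ and the monotonicity of $\Phi$ to deduce $r_\lambda>1$ from $\lambda>\sinh 1+1$; your bound $|f_\lambda(x)|\le1/\lambda$ on the real critical values is slightly sharper than the paper's $|f_\lambda(x)|<1$, but either suffices.
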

\begin{proof}
 Note that $f_{\lambda}$ is analytic on $\mathbb{D}(0,r_{\lambda})$. So, it is sufficient to prove that $\mathbb{D}(0,r_{\lambda})$ is invariant under $f_{\lambda}$, i.e.,  $f_{\lambda}(\mathbb{D}(0,r_{\lambda})) \subseteq \mathbb{D}(0,r_{\lambda})$. By Maximum Modulus Principle, if $f_{\lambda} (\partial \mathbb{D}(0,r_{\lambda})) \subseteq clos({\mathbb{D}(0,r_{\lambda})}) $ holds, then $f_{\lambda}(\mathbb{D}(0,r_{\lambda})) \subseteq \mathbb{D}(0,r_{\lambda})$. So, by substituting $z=r_{\lambda}e^{i \theta}$ and $\lambda= \Phi(r_{\lambda})$ in $f_{\lambda}(z)$, we get
$$
\left|\frac{f_{\lambda}(r_{\lambda} e^{i\theta})}{r_{\lambda}} \right|^2 = \frac{\sin^2(r_{\lambda} \cos \theta) + \sinh^2(r_{\lambda} \sin \theta)}{(r_{\lambda}^3 \cos 2\theta +{\sinh(r_{\lambda})} +r_{\lambda}^3 )^2 + (r_{\lambda}^3 \sin 2\theta)^2},$$
where $\theta \in (0,2 \pi)$. \\
\par \noindent
{Claim:} $\left|\frac{f_{\lambda}(r_{\lambda} e^{i\theta})}{r_{\lambda}} \right|^2 \leq 1$ for $\theta \in (0,2 \pi)$.\\
 Let $N_{\lambda}(\theta)$ represent the numerator of the function $
\left|\frac{f_{\lambda}(r_{\lambda} e^{i\theta})}{r_{\lambda}} \right|^2$. Then we have $N'_{\lambda}(\theta)= r_{\lambda}(-\sin(2r_{\lambda} \cos \theta) \sin{\theta} + \sinh(r_{\lambda} \sin{\theta})\cos{\theta})$ for $\theta \in (0, \pi)$.\\
 \par \noindent
For $\theta \in \left(0, \frac{\pi}{2}\right)$:\\
Since  $-\sin\theta>  -\theta$ and $\cos{\theta}>0$, $-\sin(2r_{\lambda} \cos{\theta}) > -2 r_{\lambda} \cos{\theta}$ and thus, $-(\sin(2r_{\lambda} \cos \theta)) \sin \theta > -2r_{\lambda} \cos{\theta} \sin{\theta}$. Also, $\sinh (2r_{\lambda} \sin \theta) >  2r_{\lambda} \sin \theta$ and thus, $\sinh (2r_{\lambda} \sin \theta) \cos \theta>  2r_{\lambda} \sin \theta \cos \theta$. Therefore, $N'_{\lambda}(\theta)>0$ and it implies that $N_{\lambda}(\theta)$ is strictly increasing on  $\left(0, \frac{\pi}{2}\right)$.\\
 \par \noindent
For $\theta \in \left(\frac{\pi}{2}, \pi\right)$:\\
Note that $-\sin\theta>  -\theta$ and $\cos{\theta}<0$. By parallel arguments for  $\theta \in \left(0, \frac{\pi}{2}\right)$, $N'_{\lambda}(\theta)<0$. Therfore, it implies that $N_{\lambda}(\theta)$ is strictly decreasing on  $\left(\frac{\pi}{2}, \pi\right)$. Hence, $N_{\lambda}(\theta)$ has the maximum value $\sinh^2(r_{\lambda})$ at $\frac{\pi}{2}$.\\
 Let  $D_{\lambda}(\theta)$ represent the denominator of the function  $
\left|\frac{f_{\lambda}(r_{\lambda} e^{i\theta})}{r_{\lambda}} \right|^2$. 
Now, we have $D'_{\lambda}(\theta)= -4 r^3_{\lambda}(\sinh({r_{\lambda}}) +r^3_{\lambda}) \sin(2 \theta)$ for $\theta \in (0, \pi)$. Therefore, $D'_{\lambda}(\theta)<0$ for $\theta \in (0, \frac{\pi}{2})$ and $D'_{\lambda}(\theta)>0$ for $\theta \in \left(\frac{\pi}{2}, \pi\right)$. Hence, $N_{\lambda}(\theta)$ has the minimum value $\sinh^2(r_{\lambda})$ at $\frac{\pi}{2}$. Consequently, our claim holds since $N(\theta)$ and $D(\theta)$ are $\pi$-periodic function.
\par  Further, for $\lambda> \sinh({1}) +1$,  $r_{\lambda}>1$ and also, $|f_{\lambda}(x)|<1$ for $x \in \mathbb{R}$. Therefore, all the real singularities of $f^{-1}_{\lambda}$ are contained in the disc $\mathbb{D}(0,r_{\lambda})$.
\end{proof}
\begin{thm}\label{t19}
 Let $f_{\lambda} \in \mathbb{S}$. For $\lambda \geq \lambda_{1}$, every Fatou component of $f_{\lambda}$ is simply connected.
\end{thm}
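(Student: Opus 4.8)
The plan is to reduce the statement, by a purely topological argument, to the single claim that the completely invariant component $A_1(0)$ is simply connected, and then to prove that claim by a Baker--type argument that uses the order of $f_{\lambda}$ being one. The reduction is essentially free; the second step is the crux.

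Fix $\lambda\geq\lambda_1$, so in particular $\lambda>1$. By Theorem~\ref{t18}, $A_1(0)$ is a Fatou component, and by Remark~\ref{rem9} its boundary is $\mathcal{J}(f_{\lambda})$; hence $K:=clos(A_1(0))=A_1(0)\cup\mathcal{J}(f_{\lambda})$ is a connected compact subset of $\widehat{\mathbb{C}}$, and $\widehat{\mathbb{C}}\setminus K=\mathcal{F}(f_{\lambda})\setminus A_1(0)$. Let $V$ be any Fatou component with $V\neq A_1(0)$. Since the Fatou components other than $A_1(0)$ are pairwise disjoint open sets with union $\widehat{\mathbb{C}}\setminus K$, the set $V$ is exactly one connected component of $\widehat{\mathbb{C}}\setminus K$. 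For every component $V'$ of $\widehat{\mathbb{C}}\setminus K$ one has $\emptyset\neq\partial V'\subseteq\partial K\subseteq\mathcal{J}(f_{\lambda})\subseteq K$, so $clos(V')$ is connected and meets $K$; therefore
\[
\widehat{\mathbb{C}}\setminus V\;=\;K\cup\bigcup_{V'\neq V}clos(V')
\]
is a union of connected sets each of which meets the connected set $K$, hence is connected. A domain on the sphere whose complement is connected is simply connected, so $V$ is simply connected. This already disposes of the components of the $2$-periodic basin $A_2(ia_{\lambda,2})$ when $\lambda_1<\lambda<\lambda_2$, of the parabolic domains occurring at $\lambda\in\{\lambda_1,\lambda_2\}$, and of any wandering domains, with no further work. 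It remains only to prove that $A_1(0)$ itself is simply connected.

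For this I would argue by contradiction in the style of Baker's analysis of multiply connected Fatou components. By Theorem~\ref{t18}, $A_1(0)$ is completely invariant; it contains $\mathbb{R}$ (Proposition~\ref{t2}) and the disc $\mathbb{D}(0,r_{\lambda})$ (Theorem~\ref{tt19}). If $A_1(0)$ were not simply connected, some Jordan curve $\gamma\subset A_1(0)$ would encircle a complementary component $E$ that is compact in $\mathbb{C}$ and meets $\mathcal{J}(f_{\lambda})$; since repelling periodic points are dense in $\mathcal{J}(f_{\lambda})$, $E$ contains one, and pulling $\gamma$ back repeatedly along the attracting inverse branch of the corresponding return map — which stays inside the completely invariant $A_1(0)$ — produces infinitely many pairwise disjoint non-contractible Jordan curves in $A_1(0)$, so $A_1(0)$ is infinitely connected. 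As $f_{\lambda}$ has only the two poles $\pm i\sqrt{\lambda}$, all but finitely many of the associated complementary components of $A_1(0)$ avoid the poles, and following their $f_{\lambda}$-images forces $f_{\lambda}^{-1}$ to have infinitely many transcendental singularities, direct ones among them — contradicting the Denjoy--Carleman--Ahlfors theorem~\cite{nev} for the order-one function $f_{\lambda}$ (the very estimate, at most two direct singularities, already invoked in the proof of Theorem~\ref{t18}) as well as Remark~\ref{rem9}, by which $f_{\lambda}^{-1}$ has no direct singularities at all. A possible shortcut for $\lambda\neq\lambda_2$ is that an infinitely connected invariant attracting basin cannot coexist with $I(f_{\lambda})\subseteq\mathcal{J}(f_{\lambda})$, which is Theorem~\ref{t21}. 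In any case this is the main obstacle and the only place the finite order of $f_{\lambda}$ really enters; granting it, $A_1(0)$ is simply connected, and together with the second paragraph every Fatou component of $f_{\lambda}$ is simply connected for $\lambda\geq\lambda_1$.
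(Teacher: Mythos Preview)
Your reduction in the second paragraph is fine and essentially what the paper does implicitly: once $\partial A_1(0)=\mathcal{J}(f_{\lambda})$ (Remark~\ref{rem9}), the simple connectivity of every component other than $A_1(0)$ is automatic, and the whole theorem reduces to showing that $A_1(0)$ is simply connected.

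The gap is in your third paragraph, at the sentence ``following their $f_{\lambda}$-images forces $f_{\lambda}^{-1}$ to have infinitely many transcendental singularities, direct ones among them.'' No mechanism is given, and I do not believe one exists. A bounded complementary component $E$ of $A_1(0)$ that avoids the poles satisfies $f_{\lambda}(E)\subset E'$ for another bounded complementary component $E'$; nothing in this picture produces an asymptotic path to infinity along which $f_{\lambda}$ has a finite limit, which is what a transcendental (let alone direct) singularity requires. Baker's classical argument that multiply connected Fatou components force escape to infinity is for \emph{entire} functions and for \emph{wandering} domains; it does not transfer to invariant attracting basins of meromorphic functions with poles, where multiple connectivity is not obstructed in general. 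Your ``shortcut'' via $I(f_{\lambda})\subset\mathcal{J}(f_{\lambda})$ has the same problem: an infinitely connected attracting basin contains no escaping orbits, so there is no contradiction with Theorem~\ref{t21}. The finite order of $f_{\lambda}$ and the Denjoy--Carleman--Ahlfors bound are not the operative ingredients here.

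The paper's argument is entirely different and uses the specific geometry of $f_{\lambda}$. If $\gamma\subset A_1(0)$ encloses a Julia point, then by a standard lemma (Lemma~3.1 of \cite{tn}) some forward image $f_{\lambda}^{p}(\gamma)\subset A_1(0)$ encloses a pole $\pm i\sqrt{\lambda}$. Since $\mathbb{R}\subset A_1(0)$ and the poles lie on the imaginary axis, the curve $f_{\lambda}^{p}(\gamma)$ must meet the imaginary axis at a point $iy$ with $|y|>\sqrt{\lambda}$. But Proposition~\ref{t13} describes the dynamics on the imaginary axis completely: every such point either escapes to $\infty$, or converges to the $2$-periodic (attracting or parabolic) orbit $\{\pm ia_{\lambda,2}\}$ or $\{\pm iy_{\lambda}\}$, or is a repelling periodic point; in no case does it lie in $A_1(0)$. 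This is the contradiction, and it is where the hypothesis $\lambda\geq\lambda_1$ actually enters. I would replace your Baker/DCA paragraph with this forward-iteration argument.
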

\begin{proof}
Suppose the basin of attraction $A_1(0)$ is not simply connected. Then there is a simple closed curve $\gamma$ which lie inside $A_1(0)$ but $(R(\gamma))^{\circ} \cap \mathcal{J}(f_{\lambda}) \neq \emptyset$ for $\lambda \geq \lambda_{1}$, where $R(\gamma)$ is the bounded region contained in the complement of $\gamma$. So, by Lemma 3.1 of \cite{tn}, there is a non-negative integer $p$ such that $R(f_{\lambda}^p(\gamma))$ contains at least one pole of $f_{\lambda}$. Thus either $f_{\lambda}^p(\gamma) \cap (i\sqrt{\lambda}, \infty) \neq \emptyset$ or $f_{\lambda}^p(\gamma) \cap (-\infty,-i\sqrt{\lambda} )\neq \emptyset$, therefore, there exists a $z \in A_1(0)$ such that  either $ z \in (i\sqrt{\lambda}, \infty)$ or $z \in (-\infty,-i\sqrt{\lambda} )$. Without loss of generality, assume $z \in (i\sqrt{\lambda}, \infty)$. Using Proposition \ref{t13}, either $f^n_{\lambda}(z) \to \infty$, or $f^n_{\lambda}(z) \to ia_{\lambda,2}$, or $f^n_{\lambda}(z) \to ir_{\lambda,2}$, or  $f^n_{\lambda}(z) \to iy_{\lambda}$ as $n \to \infty$ for $\lambda \in [\lambda_1, \lambda_{2}]$ and  $f^n_{\lambda}(z) \to \infty$ as $n \to \infty$ for $\lambda \in ( \lambda_{2}, \infty)$. But it is not true, since $z \in A_1(0)$. Therefore, it contradicts our supposition. Hence, $A_1(0)$ is simply connected for $\lambda \geq \lambda_1$.  Consequently, by  Part $1$ of Remark \ref{rem9}, the result holds.
\end{proof}
\begin{center}
\begin{table}[h!]
	\label{table}
	\begin{tabular} {p{3 cm}  p {8cm}  p {4 cm}  }
        \hline
	Parameter range &  Description of Fatou set $\mathcal{F}(f_{\lambda})$ &   Description of Julia set $\mathcal{J}(f_{\lambda})$\\
     \hline
    $0< \lambda\leq \lambda^{**}$  &  $\mathcal{F}(f_{\lambda})$ does not contain  Siegel discs and Herman rings.  &  $\bigcup \limits_{n=0}^{\infty} f^{-n}_{\lambda}(\mathbb{R}) \subset \mathcal{J}(f_{\lambda})$   \\
     \hline
    $\lambda^{**}< \lambda <\lambda^{*}$  &  $\mathcal{F}(f_{\lambda})$ does not contain  Siegel discs and Herman rings.  &  $\bigcup \limits_{n=0}^{\infty} f^{-n}_{\lambda}(\{n\pi | n \in \mathbb{Z}\})  \subset \mathcal{J}(f_{\lambda})$   \\
    \hline
    $\lambda = \lambda^{*}$ &
    
    $\mathcal{F}(f_{\lambda})$ contains the union of parabolic domains of $-x^*$ and $x^*$ and their preimages. Moreover, it does not contain  Siegel discs and Herman rings. &  $\bigcup \limits_{n=0}^{\infty} f^{-n}_{\lambda}(\{n\pi | n \in \mathbb{Z}\} \cup \{-x^*,x^*\})  \subset \mathcal{J}(f_{\lambda})$\\
    \hline 
    $\lambda^*<\lambda<1 $ &  $A_1(-x_\lambda)\cup A_1(x_\lambda)  \subseteq \mathcal{F}(f_{\lambda})$. Moreover, $\mathcal{F}(f_{\lambda})$ does not contain Siegel discs and Herman rings. 
    & $\bigcup \limits_{n=0}^{\infty} f^{-n}_{\lambda}(\{n\pi | n \in \mathbb{Z}\}) \subset \mathcal{J}(f_{\lambda})$ \\
    \hline
     $\lambda =1$ & $\mathcal{F}(f_{\lambda})$ contains the union of parabolic domains of $0$ and their preimages. Moreover, it does not contain  Siegel discs and Herman rings. &  $ \bigcup \limits_{n=0}^{\infty} f^{-n}_{\lambda}(\{n\pi | n \in \mathbb{Z}\}) \subset \mathcal{J}(f_{\lambda})$\\
     \hline 
     $1< \lambda< \lambda_1$ & $A_1(0) \subseteq \mathcal{F}(f_{\lambda})$. Moreover, $\mathcal{F}(f_{\lambda})$ does not contain Siegel discs and Herman rings. & $\partial A_{1}(0) = \mathcal{J}(f_{\lambda})$\\
     \hline
     $ \lambda \in \{ \lambda_1, \lambda_2\}$ & $\mathcal{F}(f_{\lambda})$ contains the union of $A_1(0)$, and parabolic domains of  $2$-periodic points $-iy_{\lambda}$ and $iy_{\lambda}$ and their preimages. Moreover, it does not contain any other Fatou components except the possibility of wandering domains. & $\partial A_{1}(0) = \mathcal{J}(f_{\lambda})$ \\
     \hline 
     $\lambda_1< \lambda< \lambda_2$ &  $\mathcal{F}(f_{\lambda}) = A_1(0) \cup A_2(ia_{\lambda,2})$ & $\partial A_{1}(0) = \mathcal{J}(f_{\lambda})$\\
      \hline 
      $ \lambda> \lambda_2$ &  $\mathcal{F}(f_{\lambda}) = A_1(0)$ & $\partial A_{1}(0) = \mathcal{J}(f_{\lambda})$\\
      \hline
	\end{tabular}
 \caption{Description of the Fatou and Julia sets of $f_{\lambda}$ for various $\lambda>0$.}
  \label{table1}
\end{table}
\end{center}
\begin{rem}
\begin{enumerate}
\item By Theorem \ref{t17},  Theorem \ref{t18}, Remark \ref{r3} and Remark \ref{rem9}, $f_{\lambda}$ has a completely invariant Fatou component $A_1(0)$ which contains all  indirect singularities over $0$ of $f^{-1}_{\lambda}$  but it does not contain imaginary critical values of $f_{\lambda}$ for $\lambda \geq \lambda_1$. 
\item By  Part (a) of Theorem \ref{t17}, and  Theorem  \ref{t18}, the Fatou set $\mathcal{F}(f_{\lambda})$ is strictly contained one completely invariant Fatou component for $\lambda \in (\lambda_{1}, \lambda_{2})$.  By Part (b) of Theorem \ref{t17} and  Theorem \ref{t18}, $\mathcal{F}(f_{\lambda})$ is a  completely invariant domain for $\lambda>\lambda_{2}$. Further, by Theorem \ref{t19}, the Julia set $\mathcal{J}(f_{\lambda})$ is connected in $\widehat{\mathbb{C}}$ for $\lambda \geq \lambda_1$.
\end{enumerate}
 \end{rem}
\section{Conclusion}
In this article, the dynamics of a one-parameter family of functions $f_{\lambda}(z)= \frac{\sin{z}}{z^2+ \lambda},$  $\lambda>0$, are investigated. It is found that the set of singularities of $f^{-1}_{\lambda}$ is bounded. It is shown that the family $f_{\lambda}$ exhibits different types of bifurcation. Also, it is proved that all components of $\mathcal{F}(f_{\lambda})$ are simply connected when $\lambda \geq \lambda_1$. 
The Fatou set $\mathcal{F}(f_{\lambda})$ has a completely invariant single attracting domain containing all indirect singularities of $f^{-1}_{\lambda}$ but not all critical values when $\lambda \geq \lambda_1$. Further, $\mathcal{F}(f_{\lambda})$ has a unique completely invariant domain, say $U_{\lambda}$ such that $\mathcal{F}(f_{\lambda})=U_{\lambda}$ when $\lambda> \lambda_2$. Moreover, the characterization of Julia set of $f_{\lambda}$ is shown for $\lambda \in (\lambda_1, \infty)\setminus \{\lambda_2\}$.  The dynamics of $f_{\lambda}$ becomes very complicated as we decrease  $\lambda$ from  $\lambda_1$ to $0$, which is left for further research.\\ 
In Table~\ref{table1}, we provide a brief summary of the dynamics of $f_{\lambda}$ for $\lambda>0$.
\bibliographystyle{plainnat}        
\bibliography{dynamo} 
\end{document}